\documentclass{article}
\usepackage[utf8]{inputenc}
\usepackage{mathtools}
\usepackage{amssymb}
\usepackage{amsthm}
\usepackage{tcolorbox}
\usepackage{xcolor}
\usepackage{graphicx, overpic}
\usepackage{subcaption}
\usepackage{pgfplots}
\usepackage{float}
\usepackage{algorithm}
\usepackage{algpseudocode}
\usepackage{tikz}
\usepackage{bm}

% units

\usepackage{geometry}
%\newgeometry{
%	left=1.5cm, right=1cm, top=1cm, bottom=1cm,
%	includefoot, heightrounded
%}

\usepackage{hyperref}
\hypersetup{
	colorlinks=true,
	linkcolor=blue,
	filecolor=magenta,      
	urlcolor=cyan,
}

\newcommand{\bI}{\mathbf I}

\newcommand{\bP}{\mathbf P}

\newcommand{\bn}{\mathbf n}

\newcommand{\bx}{\mathbf x}

\newcommand{\T}{\mathcal T}

\newcommand{\divG}{{\mathop{\,\rm div}}_{\Gamma}}
\newcommand{\gradG}{\nabla_{\Gamma}}

\newcommand{\nablaGh}{\nabla_{\hspace{-0.3ex}\Gamma_{\hspace{-0.3ex} h}}}
\newcommand{\laplG}{\Delta_{\Gamma}}

 %{\Gamma_\ast}
\newcommand{\OGamma}{\Omega^\Gamma_h}

\DeclareGraphicsExtensions{.pdf,.eps,.ps,.eps.gz,.ps.gz,.eps.Y}

\def\cl {\nonumber \\}
\def\el {\nonumber }

% differentials

\newtheorem{theorem}{Theorem}[section]

%\newtheorem{proposition}{Proposition}[section
%\newtheorem{corollary}{Corollary}

%\numberwithin{equation}{section}
%\numberwithin{figure}{section}

\begin{document}
	\title{A scalar auxiliary variable unfitted FEM for the surface Cahn--Hilliard equation}
	
	\author{Maxim Olshanskii \and Yerbol Palzhanov \and Annalisa Quaini}

	\date{\small Department of Mathematics, University of Houston, 3551 Cullen Blvd, Houston, Texas, 77204; \texttt{\{maolshanskiy\}\{ypalzhanov\}\{aquaini\}@uh.edu}}
	
	\maketitle
	
	\begin{abstract}
The paper studies a scalar auxiliary variable (SAV) method to solve the Cahn--Hilliard equation with degenerate mobility posed
on a smooth closed surface $\Gamma$. The SAV formulation is combined with adaptive time stepping and
a geometrically unfitted trace finite element method (TraceFEM), which embeds  $\Gamma$ in $\mathbb{R}^3$.
The stability is proven to hold in an appropriate sense for both first- and second-order in time variants of the method.
The performance of our SAV method is illustrated through a series of numerical experiments, which include
systematic comparison with a stabilized semi-explicit method.	
\end{abstract}

\section{Introduction}

Many physical systems are described by PDEs in the form of gradient flow 
of a free energy functional $E(\phi)$, $\phi$ being the problem unknown. 
Assuming that  $E(\phi)$ is bounded from below, the  gradient flow in a Hilbert space $H$ is defined by the identity
\begin{align}
\Big\langle \frac{\partial \phi}{\partial t},\eta \Big\rangle = -\frac{\delta E}{\delta \phi}[\eta], \label{eq:gradflow}
\end{align}
which holds for all test functions $\eta\in H$,  $\langle \cdot ,\cdot \rangle$ being the inner product in $H$ and $\frac{\delta E}{\delta \phi}$ the functional derivative of $E$ with respect to $\phi$. 
PDEs in the form of gradient flow are often derived from the second law of thermodynamics.
Examples include models for thin films (see, e.g., \cite{Giacomelli2001}), polymers (see, e.g., \cite{Fraaije2003}), and liquid crystals
(see, e.g., \cite{PhysRevE.58.7475}). In this paper, we focus on one particular gradient flow problem: 
the Cahn--Hilliard equation posed on a surface. Our interest in this problem stems from its application in biomembrane modeling %and targeted drug delivery 
\cite{hausser2013thermodynamically,garcke2016coupled,Yushutin_IJNMBE2019,zhiliakov2021experimental}. 

Constructing efficient, robust, and energy-stable numerical schemes for gradient flow problems is not a trivial task. If one is not careful in designing a numerical scheme that preserves the energy dissipation mechanism inherent in gradient flows, an extremely small time step might be required to dissipate energy, resulting in an inefficient scheme. For a comprehensive review of numerical schemes for gradient flows, we refer to \cite{Shen2019_SIAMrev}.
One effective numerical technique for a broad range of gradient flows is the scalar auxiliary variable (SAV) method \cite{shen2018scalar}, which enables the construction of efficient and accurate time discretization schemes. Since its introduction in \cite{shen2018scalar}, the SAV method has been developed and applied to various problems, including epitaxial thin film growth models \cite{cheng2019highly}, models for single- and multi-component Bose-Einstein condensates \cite{zhuang2019efficient}, and the square phase field crystal model \cite{wang2021second}.

SAV methods for our specific problem, namely the Cahn--Hilliard equation, have been extensively studied in volumetric domains. Convergence and error analysis for a first-order semi-discrete SAV scheme are conducted in \cite{Shen2018_SINUM}. An unconditionally energy-stable and second-order accurate SAV algorithm is presented in \cite{chen2019fast}. Error estimates for first and second-order fully discretized SAV schemes, utilizing a mixed finite element discretization for the spatial variables, are derived in \cite{chen2020optimal}. An improvement over the standard SAV method is represented by a class of extrapolated and linearized SAV methods based on Runge–Kutta time integration \cite{akrivis2019energy}. These methods can achieve arbitrarily high-order accuracy for the time discretization of the Cahn–Hilliard problem. 
To the best of our knowledge, it is the first time that the SAV method is applied to the \emph{surface} Cahn--Hilliard problem. 
Other surface problems are treated in \cite{sun2020numerical,sun2022modeling}. 

In early versions of the SAV method \cite{shen2018scalar,shen2019}, numerical efficiency for the Cahn--Hilliard equations is achieved through
the computationally cheap invertibility of the discrete biharmonic operator in simple geometric settings discretized by, e.g., a tensor product finite difference method. For the equations posed on surfaces, such fast solvers are not available, in general. 

%Since the inability to invert the Laplacian matrix easily calls into question the computational efficiency of the method, 
%complex geometries discretized by more sophisticated spatial discretization are not well researched yet. 
%Recently, in \cite{liao2022mesh, liao2021analysis}  it was proposed to introduce a discrete version of 
%the inverse Laplacian operator \cite{cheng2016second} for complex geometric settings.

Time adaptivity is a crucial feature of a numerical method for the Cahn--Hilliard equation in order to achieve efficiency. This is because the Cahn--Hilliard equation exhibits two distinct time scales. The first time scale is associated with the evolution of the order parameter ($\phi$ in problem \eqref{eq:gradflow}) due to interfacial effects. This time scale is typically fast and governs the short-term behavior of the system.
The second time scale is associated with the relaxation of the order parameter towards equilibrium. This time scale is typically very slow and governs the long-term behavior of the system. Using a constant time step, which is oblivious to these two time scales, would result in a highly inefficient simulation.
In \cite{huang2020highly}, high-order unconditionally stable SAV methods that utilize variable time step sizes were developed to address this issue.

% SAV and spatial discretisation
% About solving strategies and old methods
% New SAV method 
% About adaptivity

In this paper, for the first time, SAV methods are combined with a geometrically \emph{unfitted} finite element method for the numerical solution of the Cahn-Hilliard problem posed on a surface. In \cite{sun2020numerical,sun2022modeling} instead, the authors have
opted for a fitted finite element method, combined with a exponential-type SAV scheme.
We consider both first-order and second-order backward differentiation formula schemes and prove their energy stability. Additionally, we present a time-adaptive version of the second-order scheme, drawing inspiration from \cite{gomez2008isogeometric}. Implementation details are provided for all the proposed schemes.
The unfitted finite element method we choose for spatial discretization is called the trace finite element method (TraceFEM) \cite{ORG09,olshanskii2017trace}. The selection of an unfitted finite element method is motivated by its flexibility in handling complex shapes, as demonstrated in this paper, and possibly evolving surfaces, as shown in \cite{olshanskii2014eulerian,lehrenfeld2018stabilized,Yushutin2019}. Among all the unfitted finite element methods, TraceFEM offers several advantages that make it appealing: (i) it employs a sharp surface representation, (ii) surfaces can be defined implicitly without the need for surface parametrization, (iii) the number of active degrees of freedom is asymptotically optimal, and (iv) the order of convergence is optimal. 

The paper outline is as follows. Sec.~\ref{sec:pd} states the surface Cahn-Hilliard problem under consideration
and rewrites it using the scalar auxiliary variable. Sec.~\ref{sec:st_disc} presents the first and second order
SAV schemes and provides proof of their energy stability. In Sec.~\ref{sec:at_scheme}, we discuss the 
time adaptive version of the second order SAV scheme. Several numerical results are presented in Sec.~\ref{sec:num_res}
and conclusions are drawn in Sec.~\ref{sec:concl}.

\section{Problem definition}\label{sec:pd}

Let $\Gamma$ be a closed sufficiently smooth surface in $\mathbb{R}^3$, with
the outward pointing unit normal $\bn$. The Cahn--Hilliard equation \cite{cahn1958free,CAHN1961}
posed on $\Gamma$ describes phase separation in a two component system on the surface $\Gamma$. In order to state
this equation, we denote the mass concentrations of the two components with $c_i = m_i/m$, $i = 1, 2$, 
where $m_i$ are their masses and $m$ is the total mass of the system.  Since $m = m_1 + m_2$,
we have $c_1 + c_2 = 1$. Let $c_1$ be the representative concentration $c$, i.e. $c = c_1$.
We note that we choose to work with concentration instead of order parameter $\phi$ like in the generic
problem \eqref{eq:gradflow}.
Let $\rho$ be the constant total density of the system $\rho = m/S$, where
$S$ is the surface area of $\Gamma$. Finally, let $ \divG$, $\gradG$, and $\laplG$
denote the surface divergence, the tangential gradient, and the Laplace--Beltrami operator respectively. 

The Cahn--Hilliard equation is a conservation law for concentration $c(\bx, t)$
that uses an empirical law, called Fick's law, for the diffusion flux:
\begin{align}\label{eq:sys_CH1}
\rho \frac{\partial c}{\partial t} -  \divG (M \gradG \mu) = 0 \quad \text{on}~\Gamma  \times (0, T],  %\quad \mu = \frac{\delta f}{\delta c}
\end{align}
where $M$ is the so-called mobility coefficient  and
$\mu$ is the chemical potential, which is defined as the functional derivative of the total
specific free energy $f$ with respect to the concentration $c$. The total specific free energy is given by:
\begin{align}\label{eq:total_free_e}
f(c) = f_0(c) + \frac{1}{2} \epsilon^2 | \gradG c |^2,
\end{align}
where $f_0(c)$ is the free energy per unit surface and ${\epsilon}$ is thickness
of the interface layer between the two components. The second term at the right-hand side
in \eqref{eq:total_free_e} represents
the interfacial free energy based on the concentration gradient. By taking the functional derivative
of $f$ in \eqref{eq:total_free_e} with respect to $c$, we obtain: 
\begin{align}
\mu = \frac{\delta E}{\delta c}=f_0'(c) - \epsilon^2 \laplG c \quad \text{on}~\Gamma. \label{eq:sys_CH2}
\end{align}
The physical meaning of $\mu$ is chemical potential. 
Eq.~\eqref{eq:sys_CH1},\eqref{eq:sys_CH2} represent the Cahn--Hilliard problem
written in mixed form, i.e., as two coupled second-order equations.
Obviously, problem \eqref{eq:sys_CH1},\eqref{eq:sys_CH2}  needs to be supplemented with initial condition $c = c_0$
on $\Gamma\times \{0\}$, for a given $c_0$.

System \eqref{eq:sys_CH1},\eqref{eq:sys_CH2} needs to be supplemented with the
definitions of mobility $M$ and free energy per unit surface $f_0$. Out of all the possible definitions of 
$M$, we choose the so-called degenerate mobility:
\begin{align}\label{eq:M}
M = M(c) = c(1 - c).
\end{align}
A common choice for $f_0$ is given by
\begin{align}\label{eq:f0}
f_0(c) = \frac{1}{4} c^2(1 - c)^2.
\end{align}

Straightforward calculations show that for constant mobility $M$, the Cahn--Hilliard problem defines  gradient flows of the energy functional
	\begin{equation}\label{eq:E1}
		E(c) =\int_{\Gamma}f(c)\,ds = \int_{\Gamma} \frac{1}{2}\epsilon^2 |\gradG c|^2\ ds +E_1(c), \quad \text{with}~E_1(c) = \int_{\Gamma}f_0(c)\,ds,
	\end{equation} 
in  $H^{-1}(\Gamma)$ (a dual space to $H^1(\Gamma)$).  For the degenerate mobility, the Cahn--Hilliard problem is 
known to define gradient flows in a weighted-Wasserstein metric~\cite{lisini2012cahn}.
Incorporating various definitions of mobility $M(c)$ into the Cahn-Hilliard equations can exert a notable 
impact on the dynamic behavior of $c$, even without altering the energy landscape. 
Thanks to gradient structure mentioned above, the following energy dissipation property holds:
\begin{align}
  \frac{d}{dt}E(c) <0. \label{E_CH}
\end{align}
%Note that, in order for the free energy to be physically correct,  $E_1(c)$ must be  bounded from below. 

A time discretization scheme for problem \eqref{eq:sys_CH1},\eqref{eq:sys_CH2} is said to be energy stable 
if it satisfies a discrete energy dissipation law, i.e., it needs to adhere to fundamental property
\eqref{E_CH}. In this paper, we construct an energy stable scheme for \eqref{eq:sys_CH1},\eqref{eq:sys_CH2} 
using the scalar auxiliary variable (SAV) approach (see \cite{Shen2019_SIAMrev} for a review). As the name suggests, 
this method introduces a scalar auxiliary variable
\begin{align}
r(t) = \sqrt{E_1(c(t))+C}, \label{eq:aux_v}
\end{align}
where constant $C$ can be added to ensure that $r(t)$ is well defined. %Such constant does not affect gradient flow.
Without loss of generality, for the rest of the paper we will assume that $E_1(c)  >0$, i.e., $C = 0$. 
Then, system \eqref{eq:sys_CH1},\eqref{eq:sys_CH2}
can be rewritten as follows:
\begin{align} 
\rho \frac{\partial c}{\partial t}   &=  \divG \left(M(c) \gradG \mu \right)  &&\text{on}~\Gamma\times (0, T] , \label{saveq1} \\
\mu &=  \frac{r(t)}{\sqrt{E_1(c)}}f_0' - \epsilon^2 \laplG c && \text{on}~\Gamma\times (0, T], \label{saveq2}\\
\frac{d r}{d t}  &= \frac{1}{2\sqrt{E_1(c)}}\int_{\Gamma}f_0'(c) \frac{\partial c}{\partial t}  ds &&\text{on}~\Gamma\times (0, T]. \label{saveq3}
\end{align}
System \eqref{saveq1}-\eqref{saveq3} represents the starting point for the construction of 
our energy stable SAV scheme.

For the numerical method presented in the next section, we need a variational formulation of 
surface problem \eqref{saveq1}-\eqref{saveq3}.
To devise it, we multiply \eqref{saveq1} by $v\in H^1(\Gamma)$ and \eqref{saveq2} by $q\in H^1(\Gamma)$, 
integrate over $\Gamma$ and employ the integration by parts identity \cite{Yushutin_IJNMBE2019}.
This leads to the formulation: Find $(c,\mu) \in H^1(\Gamma) \times H^1(\Gamma)$ %(for $\alpha>0$)
such that
\begin{align}
&\int_\Gamma \rho \frac{\partial c}{\partial t} \,v \, ds = - \int_\Gamma M(c) \gradG \mu \, \gradG v \, ds , \label{eq:sys_CH1_weak} \\
&\int_\Gamma  \mu \,q \, ds = \int_\Gamma \frac{r(t)}{\sqrt{E_1(c)}} f_0'(c) \,q \, ds + \int_\Gamma \epsilon^2 \gradG c \, \gradG q \, ds, \label{eq:sys_CH2_weak}
\end{align}
for all $ (v,q) \in H^1(\Gamma) \times H^1(\Gamma)$, while \eqref{saveq3} remains unchanged.

The majority of the papers available in the literature on SAV methods for the Cahn-Hilliard problem employs constant mobility.
Nevertheless, degenerate mobility has been considered in many practical applications (see, e.g., \cite{Yushutin_IJNMBE2019})
and is non-trivial to handle numerically (see, e.g., \cite{guillen2023energy} for recent advances).
The only paper that proposes a SAV approach for the Cahn--Hilliard equation with degenerate mobility
is \cite{huang2022upwind}. 

\section{Space and time discretization}\label{sec:st_disc}	

For the space discretization of the surface Cahn--Hilliard problem described in the previous section, 
we apply the trace finite element method (TraceFEM)~\cite{olshanskii2017trace,Yushutin_IJNMBE2019}. 
This is an unfitted method that allows to solve for scalar or vector fields on surface $\Gamma$ 
without the need for a parametrization or triangulation of $\Gamma$ itself. As typical 
of unfitted methods, TraceFEM relies on a triangulation of a bulk computational domain $\Omega$ 
($\Gamma\subset\Omega$ holds) into shape regular tetrahedra ``blind'' to the position of $\Gamma$. 
Such position is defined implicitly as the zero level set of a sufficiently smooth (at least Lipschitz continuous) 
function $\phi$, i.e., $\Gamma=\{\bx\in\Omega\,:\, \phi(\bx)=0\}$, such that $|\nabla\phi|\ge c_0>0$ in a 
3D neighborhood of the surface. 
%\anna{Depending on what we end up putting in this paper, we may want to further elaborate on the
%advantages of this appraoch.}
%In fact, as already mentioned in Sec.~\ref{sec:intro}, evolving surfaces with possible topology changes
%can be relatively easily handled in the present numerical framework \cite{lehrenfeld2018stabilized}.

%Consider a family
Let $\T_h$ be  the collection of all tetrahedra, such that $\overline{\Omega}=\cup_{T\in\T_h}\overline{T}$.
Typically, we refine the grid  $\T_h$ near $\Gamma$.
The subset of tetrahedra that have a \emph{nonzero intersection} with $\Gamma$ is denoted by $\T_h^\Gamma$.
%, however the tetrahedra from $\T_h^\Gamma$ form a quasi-uniform tessellation with the characteristic tetrahedra  size  $h$.
The domain formed by all tetrahedra in $\T_h^\Gamma$ is denoted by $\OGamma$.
On $\T_h^\Gamma$ we use a standard finite element space of continuous functions that are piecewise-polynomials
of degree $1$. Obviously, other choices of finite elements are possible (see, e.g., \cite{grande2018analysis}).
This bulk (volumetric) finite element space is denoted by $V_h$:
\[
V_h=\{v\in C(\OGamma)\,:\, v\in P_1(T)~\text{for any}~T\in\T_h^{\Gamma}\}.
\]
Finally, to define geometric quantities and for the purpose of numerical integration, we approximate $\Gamma$ with a ``discrete'' surface $\Gamma_h$, which is defined as the zero level set of a $P_1$ Lagrangian interpolant $\phi_h$ for level set function $\phi$ on the given mesh.
The $(\cdot,\cdot)$ inner product and $\|\cdot\|$ norm further denotes the $L^2(\Gamma_h)$ inner product and norm.
The approximate normal vector field  $\bn_h=\nabla \phi_h/|\nabla \phi_h|$ is piecewise smooth on $\Gamma_h$. The orthogonal projection into tangential space is given by $\bP_h(\bx)=\bI-\bn_h(\bx)\bn_h^T(\bx)$ for almost all $\bx\in\Gamma_h$. For $v\in V_h$ the surface gradient on $\Gamma_h$ is easy to compute from the bulk gradient $\nablaGh v=\bP_h\nabla v$.

%\[
%\Gamma_h=\{\bx\in\Omega\,:\, \phi_h(\bx)=0,\quad \phi_h:=I_{h/2}(\phi(\bx))\in V_{h/2}\}, \quad \text{with}~\bn_h=
%\frac{\nabla I^2_{h/2}(\phi)}{|\nabla I^2_{h/2}(\phi)|},
%\]
%where $I^2_{h/2}(\phi)$ is a $P_2$ nodal interpolant of the level set function. See Fig.~\ref{fig:Gamma_h}.
%Let $\bn_h=\nabla I^2_{h/2}(\phi)/|\nabla I^2_{h/2}(\phi)|$

Let us turn to time discretization. At time instance $t^n=n\Delta t$, with time step $\Delta t=\frac{T}{N}$,  
$c^n$ denotes the approximation of $c(t^n, \bx)$; similar notation is used for other quantities of interest. 
Variational problem \eqref{saveq3}--\eqref{eq:sys_CH2_weak} discretized in space by TraceFEM 
and in time by the implicit Euler (also called BDF1) scheme reads: Given $c_0$ and the associated $E_1(c_0)$ and $r_0$ \eqref{eq:aux_v}, for $n \geq 0$ at time step $t^{n+1}$
find $(c_h^{n+1},\mu_h^{n+1}, r_h^{n+1}) \in V_h \times V_h \times \mathbb{R}$ %(for $\alpha>0$)
	such that
	\begin{align}
	&\rho(c_h^{n+1} - c_h^{n}, v_h) = -\Delta t (M(c^n)\nablaGh \mu_h^{n+1}, \nablaGh v_h) - h\Delta t \int_{\OGamma} (\textbf{n}_h \cdot \nabla \mu_h^{n+1})(\textbf{n}_h \cdot \nabla v_h)dx, \label{dissaveq1}\\
	& (\mu_h^{n+1}, q_h) = \frac{r_h^{n+1}}{\sqrt{E_1(c_h^n) }}(f_0'(c_h^n),q_h) + \epsilon^2(\nablaGh c_h^{n+1}, \nablaGh q_h) + h^{-1} \epsilon^2\int_{\OGamma} (\textbf{n}_h \cdot \nabla c_h^{n+1})(\textbf{n}_h \cdot \nabla q_h)dx, \label{dissaveq2} \\
	& r_h^{n+1} - r_h^n = \frac{1}{2\sqrt{E_1(c_h^n)}}(f_0'(c_h^n),c_h^{n+1} - c_h^n) \label{dissaveq3}  
	\end{align}
for all $(v_h,q_h) \in V_h \times V_h $.  The volumetric terms in \eqref{dissaveq1}--\eqref{dissaveq2} are
included to stabilize the resulting algebraic systems~\cite{burman2015stabilized,grande2018analysis}. Notice that the nonlinear terms in 
\eqref{dissaveq1}--\eqref{dissaveq3} have been linearized with a first order extrapolation. We will call this approach SAV-BDF1.

\begin{theorem}\label{energydecayBDF1}
	Let 
\begin{align}
\tilde{E}_h^{n+1} = \frac{\epsilon^2}{2} \left\lVert\nablaGh c_h^{n+1}\right\rVert^2 + \left|r_h^{n+1}\right|^2 + h\epsilon^2 \left\lVert \textbf{n}_h \cdot \nabla c_h^{n+1} \right\rVert^2_{L^2(\OGamma)}. \label{eq:mod_e_BDF1}
\end{align}
be the modified discrete energy.
Scheme \eqref{dissaveq1}--\eqref{dissaveq3} admits the following energy balance
\begin{align}
&\left(  \tilde{E}_h^{n+1}- \tilde{E}_h^{n} \right) +\frac{\epsilon^2}{2} \left\lVert\nablaGh c_h^{n+1}-\nablaGh c_h^{n-1}\right\rVert^2 +
\left|r_h^{n+1}-r_h^{n}\right|^2 +  \frac{h  \epsilon^2}{2} \left\lVert \textbf{n}_h \cdot \nabla (c_h^{n+1} - c_h^{n}) \right\rVert^2_{L^2(\OGamma)} \cl
&\quad =  - \frac{\Delta t}{\rho} (M(\widetilde c^n)\nablaGh \mu_h^{n+1}, \nablaGh \mu_h^{n+1})  - \frac{h\Delta t}{\rho} \left\lVert\textbf{n}_h \cdot \nabla \mu_h^{n+1}\right\rVert^2_{L^2(\OGamma)}. \label{eq:e_stab1}
\end{align}
In particular, this implies that the scheme \eqref{dissaveq1}--\eqref{dissaveq3} is energy stable in the sense that 
$ \tilde{E}_h^{n+1} \le \tilde{E}_h^{n}$ (the discrete analogue of \eqref{E_CH}) for all $n=0,1,2,\dots$.
\end{theorem}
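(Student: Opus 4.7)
The plan is to imitate the standard SAV energy-argument, carefully choosing test functions so that the nonlinear term involving $f_0'(c_h^n)$ and $r_h^{n+1}$ cancels exactly between the momentum-like equation for $c$ and the ODE for the scalar auxiliary variable. Concretely I would proceed in three testing steps: (i) set $v_h=\mu_h^{n+1}$ in \eqref{dissaveq1} and divide by $\rho$; (ii) set $q_h = c_h^{n+1}-c_h^{n}$ in \eqref{dissaveq2}; (iii) multiply \eqref{dissaveq3} by $2r_h^{n+1}$. The left-hand sides of (i) and (ii) produce the same inner product $(\mu_h^{n+1},c_h^{n+1}-c_h^{n})$, so subtracting yields a single identity in which the only surviving nonlinear term is
\begin{equation*}
\frac{r_h^{n+1}}{\sqrt{E_1(c_h^n)}}\bigl(f_0'(c_h^n),\,c_h^{n+1}-c_h^{n}\bigr).
\end{equation*}
By design, step (iii) rewrites exactly this quantity as $2r_h^{n+1}(r_h^{n+1}-r_h^{n})$, so it can be eliminated in favor of $r$-increments.

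Next I would apply the elementary polarization identity $2(a,a-b)=\|a\|^2-\|b\|^2+\|a-b\|^2$ to the three remaining quadratic pairings $(\nablaGh c_h^{n+1},\nablaGh(c_h^{n+1}-c_h^{n}))$, $(\bn_h\cdot\nabla c_h^{n+1},\bn_h\cdot\nabla(c_h^{n+1}-c_h^n))_{L^2(\OGamma)}$, and $2r_h^{n+1}(r_h^{n+1}-r_h^{n})$. Each of these produces a clean energy difference $\tfrac12(\|\cdot^{n+1}\|^2-\|\cdot^{n}\|^2)$ plus a non-negative "numerical dissipation" term $\tfrac12\|\cdot^{n+1}-\cdot^{n}\|^2$. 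Collecting these increments reassembles exactly the modified energy $\tilde E_h^{n+1}$ from \eqref{eq:mod_e_BDF1}, while the square-of-difference terms form the second group on the left-hand side of \eqref{eq:e_stab1}. The right-hand side $-\frac{\Delta t}{\rho}(M(\widetilde c^n)\nablaGh\mu_h^{n+1},\nablaGh\mu_h^{n+1})-\frac{h\Delta t}{\rho}\|\bn_h\cdot\nabla\mu_h^{n+1}\|_{L^2(\OGamma)}^2$ comes directly from the dissipation terms produced in step (i).

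The final claim that $\tilde E_h^{n+1}\le \tilde E_h^{n}$ then follows provided the right-hand side of \eqref{eq:e_stab1} is non-positive, i.e.\ provided $M(\widetilde c^n)\ge 0$ pointwise on $\Gamma_h$. This is the one genuinely non-routine point of the proof: since the degenerate mobility $M(c)=c(1-c)$ is negative outside $[0,1]$ and the scheme does not a priori enforce a maximum principle on $c_h^n$, a bound-preserving modification $\widetilde c^n$ of $c_h^n$ (truncation or projection onto $[0,1]$) is needed so that $M(\widetilde c^n)\ge 0$. I would therefore include a brief remark clarifying the role of $\widetilde c^n$ in \eqref{eq:e_stab1}, with everything else reducing to bookkeeping. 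The main obstacle is thus not an analytic one but a design one: ensuring the cancellation pattern lines up, which is exactly why the test functions $\mu_h^{n+1}$, $c_h^{n+1}-c_h^n$, and the multiplier $2r_h^{n+1}$ are the right choices.
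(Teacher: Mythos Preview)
Your proposal is correct and follows essentially the same route as the paper: test \eqref{dissaveq1} with $\mu_h^{n+1}/\rho$, test \eqref{dissaveq2} with $c_h^{n+1}-c_h^n$, multiply \eqref{dissaveq3} by $2r_h^{n+1}$, and apply the polarization identity $2(a,a-b)=\|a\|^2-\|b\|^2+\|a-b\|^2$. Your additional remark on the need for $M(\widetilde c^n)\ge 0$ to conclude $\tilde E_h^{n+1}\le\tilde E_h^n$ is a point the paper does not make explicit and is worth keeping.
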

\begin{proof}
Combine the equations obtained from taking $v_h = \mu_h^{n+1}/\rho$ in \eqref{dissaveq1} and $q_h = (c_h^{n+1} - c_h^{n})$
in \eqref{dissaveq2} to get
\begin{align}
\frac{r_h^{n+1}}{\sqrt{E_1(c_h^n) }}(f_0'(c_h^n),c_h^{n+1} &- c_h^{n}) + \epsilon^2(\nablaGh c_h^{n+1}, \nablaGh (c_h^{n+1} - c_h^{n}))\cl
\qquad &+ h^{-1} \epsilon^2\int_{\OGamma} (\textbf{n}_h \cdot \nabla c_h^{n+1})(\textbf{n}_h \cdot \nabla (c_h^{n+1} - c_h^{n}))dx \cl
&\quad  = -\frac{\Delta t}{\rho} (M(\widetilde c^n)\nablaGh \mu_h^{n+1}, \nablaGh \mu_h^{n+1}) - \frac{h\Delta t}{\rho} \left\lVert \textbf{n}_h \cdot \nabla \mu_h^{n+1}\right\rVert^2_{L^2(\OGamma)}. \label{eq:stab1_1} 
\end{align}
By plugging \eqref{dissaveq3} multiplied by $2r_h^{n+1}$ into \eqref{eq:stab1_1}, we obtain:
\begin{align}
&2r_h^{n+1}(r_h^{n+1} - r_h^n) + \epsilon^2 (\nablaGh c_h^{n+1}, \nablaGh (c_h^{n+1} - c_h^{n})) + h^{-1} \epsilon^2\int_{\OGamma} (\textbf{n}_h \cdot \nabla c_h^{n+1})(\textbf{n}_h \cdot \nabla (c_h^{n+1} - c_h^{n}))dx  \cl
&\quad  = -\frac{\Delta t}{\rho} (M(\widetilde c^n)\nablaGh \mu_h^{n+1}, \nablaGh \mu_h^{n+1}) - \frac{h\Delta t}{\rho} \left\lVert \textbf{n}_h \cdot \nabla \mu_h^{n+1}\right\rVert^2_{L^2(\OGamma)}. \label{eq:stab1_2} 
\end{align}
Using identity 
\begin{align}
2\left(a^{k+1}, a^{k+1}-a^{k}\right)= \left|a^{k+1}\right|^2-\left|a^{k}\right|^2 + \left(a^{k+1}-a^{k}\right)^2 \el
\end{align}
in \eqref{eq:stab1_2} leads to \eqref{eq:e_stab1}.
%	\begin{multline}
%		\frac{||\epsilon \nabla c_h^{n+1}||^2}{2} + (r_h^{n+1})^2 + h^{-1} \epsilon^2\int_{\OGamma} (\textbf{n} \cdot \nabla c_h^{n+1})^2dx - \frac{||\epsilon \nabla c_h^{n}||^2}{2} - (r_h^{n})^2 -h^{-1} \epsilon^2\int_{\OGamma} (\textbf{n} \cdot \nabla c_h^{n})^2dx+\\
%		 + h^{-1} \epsilon^2\int_{\OGamma} \frac{(\textbf{n} \cdot \nabla (c_h^{n+1} - c_h^{n}))^2}{2}dx + \frac{||\epsilon \nabla c_h^{n+1}- \epsilon \nabla c_h^{n}||^2}{2} + (r_h^{n+1}-r_h^{n})^2 =\\
%		=\frac{-\Delta t}{\rho} (M(\widetilde c^n)\nabla \mu_h^{n+1}, \nabla \mu_h^{n+1}) - \frac{h\Delta t}{\rho} \int_{\OGamma} (\textbf{n} \cdot \nabla \mu_h^{n+1})^2dx \leq 0
%	\end{multline}
\end{proof}

For a second order scheme in time, we adopt Backward Differentiation Formula of order 2 (BDF2).
A second order approximation of a first time derivative and a linear extrapolation of second order at time $t^n$: 
 \begin{equation}\label{BDF2}
 \frac{\partial c}{\partial t} \approx \frac{3c^{n}-4c^{n-1}+c^{n-2}}{2\Delta t}, \quad
 \tilde{c}^n = 2 c^{n-1} - c^{n-2},
\end{equation}
respectively.
Then, the space and time discrete version of problem \eqref{saveq3}-\eqref{eq:sys_CH2_weak}
reads: Given $c_0$ and the associated $E_1(c_0)$ and $r_0$ \eqref{eq:aux_v}, find $(c_h^{1},\mu_h^{1}, r_h^{1}) \in V_h \times V_h \times \mathbb{R}$
such that \eqref{dissaveq1}-\eqref{dissaveq3} hold and for $n \geq 1$ at time step $t^{n+1}$
find $(c_h^{n+1},\mu_h^{n+1}, r_h^{n+1}) \in V_h \times V_h \times \mathbb{R}$ %(for $\alpha>0$)
such that 
\begin{align}
	& \frac{\rho}{2\Delta t} \left({3c_h^{n+1} - 4c_h^{n}+c_h^{n-1}}, v_h\right) = - (M(\tilde{c}^n_h)\nablaGh \mu_h^{n+1}, \nablaGh v_h) - h \int_{\OGamma} (\textbf{n}_h \cdot \nabla \mu_h^{n+1})(\textbf{n}_h \cdot \nabla v_h)dx, \label{dissaveq21}\\
	& (\mu_h^{n+1}, q_h) = \frac{r_h^{n+1}}{\sqrt{E_1(\tilde{c}^n_h) }}(f_0'(\tilde{c}^n_h),q_h) + \epsilon^2(\nablaGh c_h^{n+1}, \nablaGh q_h) + h^{-1} \epsilon^2\int_{\OGamma} (\textbf{n}_h \cdot \nabla c_h^{n+1})(\textbf{n}_h \cdot \nabla q_h)dx, \label{dissaveq22} \\
	& 3r_h^{n+1} - 4r_h^n+r_h^{n-1} = \frac{1}{2\sqrt{E_1(\tilde{c}^n_h)}}(f_0'(\tilde{c}^n_h),3c_h^{n+1} - 4c_h^{n}+c_h^{n-1}), \label{dissaveq23}  
\end{align}
for all $(v_h,q_h) \in V_h \times V_h $. We will call this approach SAV-BDF2.

\begin{theorem}\label{energydecayBDF2}
Let 
\begin{align}
		\tilde{E}_h^{n+1} = &\frac{\epsilon^2}{2} \left\lVert\nablaGh c_h^{n+1}\right\rVert^2+\frac{\epsilon^2}{2} \left\lVert 2 \nablaGh c_h^{n+1}-\nablaGh c_h^n \right\rVert^2 +\left|r_h^{n+1}\right|^2+\left|2 r_h^{n+1}-r_h^n\right|^2 \cl
		  &+ \frac{h\epsilon^2}{2} \left\lVert \textbf{n}_h \cdot \nabla c_h^{n+1} \right\rVert^2_{L^2(\OGamma)} + \frac{h^{-1} \epsilon^2}{2}\left\lVert(\textbf{n}_h \cdot \nabla (2c_h^{n+1} - c_h^n)\right\rVert^2_{L^2(\OGamma)}   \label{eq:mod_e_BDF2}
	\end{align}
be the modified discrete energy.
Scheme \eqref{dissaveq1}--\eqref{dissaveq3} admits the following energy balance
	\begin{align}
		&\left(  \tilde{E}_h^{n+1}- \tilde{E}_h^{n} \right) +\frac{\epsilon^2}{2} \left\lVert\nablaGh c_h^{n+1}-2 \nablaGh c_h^n+\nablaGh c_h^{n-1}\right\rVert^2 +
		\left|r_h^{n+1}-2 r_h^n+r_h^{n-1}\right|^2 \cl
		&\quad 
		+  \frac{h  \epsilon^2}{2} \left\lVert \textbf{n}_h \cdot \nabla (c_h^{n+1} - 2c_h^n + c_h^{n-1}) \right\rVert^2_{L^2(\OGamma)} =  - \frac{2\Delta t}{\rho} (M(\widetilde c^n)\nablaGh \mu_h^{n+1}, \nablaGh \mu_h^{n+1})  \cl
		&\quad - \frac{ 2h\Delta t}{\rho} \left\lVert\textbf{n}_h \cdot \nabla \mu_h^{n+1} \right\rVert^2_{L^2(\OGamma)}. \label{eq:e_stab}
	\end{align}
In particular, this implies that the scheme \eqref{dissaveq1}--\eqref{dissaveq3} is energy stable in the sense that 
$ \tilde{E}_h^{n+1} \le \tilde{E}_h^{n}$ (the discrete analogue of \eqref{E_CH}) for all $n=0,1,2,\dots$.
\end{theorem}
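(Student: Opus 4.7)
The plan is to mirror the BDF1 proof from Theorem~\ref{energydecayBDF1}, with the key substitution that the first-order polarization identity is replaced by the BDF2 polarization identity
\begin{align*}
2(3a^{n+1}-4a^{n}+a^{n-1},\,a^{n+1})
 &= |a^{n+1}|^2 - |a^{n}|^2 + |2a^{n+1}-a^{n}|^2 - |2a^{n}-a^{n-1}|^2 \\
 &\quad + |a^{n+1}-2a^{n}+a^{n-1}|^2,
\end{align*}
applied with $a = r_h$, $a = \nablaGh c_h$, and $a = \bn_h\cdot\nabla c_h$ in turn. This identity is precisely what produces the mixed squared-norm structure appearing in the modified energy $\tilde E_h^{n+1}$ in \eqref{eq:mod_e_BDF2}.

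First I would test \eqref{dissaveq21} with $v_h = 2\Delta t\, \mu_h^{n+1}/\rho$ and \eqref{dissaveq22} with $q_h = 3c_h^{n+1}-4c_h^n+c_h^{n-1}$. Summing and rearranging puts the dissipation on the right-hand side (giving the $-(2\Delta t/\rho)(M(\tilde c^n)\nablaGh\mu_h^{n+1},\nablaGh\mu_h^{n+1})$ term and the $-(2h\Delta t/\rho)\|\bn_h\cdot\nabla\mu_h^{n+1}\|^2$ stabilization term from \eqref{eq:e_stab}), while on the left-hand side the coupling term involving $f_0'(\tilde c_h^n)$ remains. Next, I would multiply \eqref{dissaveq23} by $2r_h^{n+1}$ and substitute to cancel the nonlinear $f_0'$-coupling in favor of the purely scalar expression $2r_h^{n+1}(3r_h^{n+1}-4r_h^n+r_h^{n-1})$. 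At that point the left-hand side has exactly three BDF2-type inner products — one scalar (in $r_h$), one tangential-gradient (in $c_h$), and one normal-derivative stabilization — to which the identity above applies directly.

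Applying the BDF2 polarization identity to each of the three terms yields differences of the form $\tilde E_h^{n+1}-\tilde E_h^{n}$ together with the nonnegative dissipative squared norms $\|\nablaGh c_h^{n+1}-2\nablaGh c_h^n+\nablaGh c_h^{n-1}\|^2$, $|r_h^{n+1}-2r_h^n+r_h^{n-1}|^2$, and the corresponding normal-derivative square, which exactly reproduces the left-hand side of \eqref{eq:e_stab}. The unconditional stability $\tilde E_h^{n+1}\le \tilde E_h^{n}$ then follows from the nonnegativity of the degenerate mobility $M(\tilde c^n)\ge 0$ on the discrete surface (which needs to be enforced, e.g., by truncation of $\tilde c^n$ to $[0,1]$, or alternatively by assuming this as a discrete maximum principle) and the nonnegativity of the normal-derivative stabilization.

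The main obstacle I anticipate is bookkeeping rather than conceptual: one must carefully line up the $2\Delta t$ factor in \eqref{dissaveq21}, ensure that the multiplier $2r_h^{n+1}$ in \eqref{dissaveq23} converts the SAV coupling into the required scalar BDF2 inner product with the correct sign, and verify that the normal-derivative stabilization term, whose coefficient $h^{-1}\epsilon^2$ in \eqref{dissaveq22} differs from the $h\epsilon^2$ appearing in $\tilde E_h^{n+1}$, enters consistently after the test $q_h=3c_h^{n+1}-4c_h^n+c_h^{n-1}$. A secondary subtlety is that $M(\tilde c^n)$ is evaluated at the extrapolated $\tilde c^n=2c^{n-1}-c^{n-2}$, which need not lie in $[0,1]$; if this is not controlled, the ``dissipation'' on the right-hand side of \eqref{eq:e_stab} is only formally nonpositive, and strict energy decay requires the aforementioned truncation or an \emph{a posteriori} argument, exactly as in the constant-mobility SAV literature.
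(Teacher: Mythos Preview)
Your proposal is correct and follows essentially the same approach as the paper: test \eqref{dissaveq21} with $v_h=2\Delta t\,\mu_h^{n+1}/\rho$, test \eqref{dissaveq22} with $q_h=3c_h^{n+1}-4c_h^n+c_h^{n-1}$, multiply \eqref{dissaveq23} by $2r_h^{n+1}$ to eliminate the $f_0'$ coupling, and then apply the BDF2 polarization identity to each of the three resulting inner products. Your additional remarks on the $h^{-1}\epsilon^2$ versus $h\epsilon^2$ bookkeeping and on the need for $M(\tilde c^n)\ge 0$ (which the paper leaves implicit) are well taken.
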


\begin{proof}
Combine the equations obtained from taking $ v_h = 2 \Delta t \mu_h^{n+1}/\rho$ in \eqref{dissaveq21}, $q_h = (3c_h^{n+1} - 4c_h^n +c_h^{n-1})$ in \eqref{dissaveq22} to get
%Take $ v_h = 2 \Delta t \mu_h^{n+1}/\rho$ in \eqref{dissaveq21}, $q_h = (3c_h^{n+1} - 4c_h^n +c_h^{n-1})$ in \eqref{dissaveq22} to obtain equations
%\begin{align}
%&(3c_h^{n+1} - 4c_h^{n}+c_h^{n-1}, \mu_h^{n+1}) = - \frac{2\Delta t}{\rho} (M(\widetilde c^n)\nabla \mu_h^{n+1}, \nabla \mu_h^{n+1}) \cl
%&\quad - \frac{2h\Delta t}{\rho} \int_{\OGamma} (\textbf{n} \cdot \nabla \mu_h^{n+1})(\textbf{n} \cdot \nabla \mu_h^{n+1})dx, \cl
%&(\mu_h^{n+1},  {3c_h^{n+1} - 4c_h^n +c_h^{n-1}}) = \frac{r_h^{n+1}}{\sqrt{E_1(\tilde{c}^n_h) }}(f_0'(\tilde{c}^n_h),3c_h^{n+1} - 4c_h^n +c_h^{n-1}) \cl
%&\quad + \epsilon^2(\nabla c_h^{n+1}, \nabla (3c_h^{n+1} - 4c_h^n +c_h^{n-1})) + h^{-1} \epsilon^2 \int_{\OGamma} (\textbf{n} \cdot \nabla c_h^{n+1})(\textbf{n} \cdot \nabla ({3c_h^{n+1} - 4c_h^n +c_h^{n-1}}))dx, \el
%\end{align}
%which imply
\begin{align}
&\frac{r_h^{n+1}}{\sqrt{E_1(\tilde{c}^n_h) }}(f_0'(\tilde{c}^n_h),3c_h^{n+1} - 4c_h^n +c_h^{n-1}) + \epsilon^2 (\nablaGh c_h^{n+1}, \nablaGh (3c_h^{n+1} - 4c_h^n +c_h^{n-1}))  \cl
&\quad + h^{-1} \epsilon^2 \int_{\OGamma} (\textbf{n}_h \cdot \nabla c_h^{n+1})(\textbf{n}_h \cdot \nabla ({3c_h^{n+1} - 4c_h^n +c_h^{n-1}}))dx =  - \frac{2\Delta t}{\rho} (M(\widetilde c^n)\nablaGh \mu_h^{n+1}, \nablaGh \mu_h^{n+1}) \cl
& \quad - \frac{2h\Delta t}{\rho} \left\lVert \textbf{n}_h \cdot \nabla \mu_h^{n+1} \right\rVert^2_{L^2(\OGamma)}. \label{eq:stab_t1}
\end{align}
By plugging \eqref{dissaveq23} multiplied by $2r_h^{n+1}$ into \eqref{eq:stab_t1}, we obtain
\begin{align}
&2r_h^{n+1}  (3r_h^{n+1} - 4r_h^n+r_h^{n-1}) + \epsilon^2 (\nablaGh c_h^{n+1}, \nablaGh (3c_h^{n+1} - 4c_h^n +c_h^{n-1}))  \cl
&\quad + h^{-1} \epsilon^2\int_{\OGamma} (\textbf{n}_h \cdot \nabla c_h^{n+1})(\textbf{n}_h \cdot \nabla ({3c_h^{n+1} - 4c_h^n +c_h^{n-1}}))dx = - \frac{2\Delta t}{\rho} (M(\widetilde c^n)\nablaGh \mu_h^{n+1}, \nablaGh \mu_h^{n+1}) \cl
& \quad - \frac{2h\Delta t}{\rho} \int_{\OGamma} (\textbf{n}_h \cdot \nabla \mu_h^{n+1})(\textbf{n}_h \cdot \nabla \mu_h^{n+1})dx . \el
\end{align}

Let us make use of identity 
\begin{align}
&2\left(a^{k+1}, 3 a^{k+1}-4 a^k+a^{k-1}\right)= \left|a^{k+1}\right|^2+\left|2 a^{k+1}-a^k\right|^2+\left|a^{k+1}-2 a^k+a^{k-1}\right|^2\cl
&\quad -\left|a^k\right|^2-\left|2 a^k-a^{k-1}\right|^2
\end{align}
to get:
\begin{align}
&\left|r_h^{n+1}\right|^2+\left|2 r_h^{n+1}-r_h^n\right|^2+\left|r_h^{n+1}-2 r_h^n+r_h^{n-1}\right|^2 -\left|r_h^n\right|^2-\left|2 r_h^n-r_h^{n-1}\right|^2   +\frac{\epsilon^2}{2} \left\lVert\nablaGh c_h^{n+1}\right\rVert^2 \cl
&\quad  +\frac{\epsilon^2}{2}\left\lVert2 \nablaGh c_h^{n+1}-\nablaGh c_h^n\right\rVert^2+\frac{\epsilon^2}{2}\left\lVert\nablaGh c_h^{n+1}-2 \nablaGh c_h^n+\nablaGh c_h^{n-1}\right\rVert^2 -\frac{\epsilon^2}{2}\left\lVert\nablaGh c_h^n\right\rVert^2-\frac{\epsilon^2}{2}\left\lVert2 \nablaGh c_h^n-\nablaGh c_h^{n-1}\right\rVert^2   \cl
&\quad + \frac{h  \epsilon^2}{2}\left\lVert \textbf{n}_h \cdot \nabla c_h^{n+1}\right\rVert^2_{L^2(\OGamma)} + \frac{h  \epsilon^2}{2} \left\lVert \textbf{n}_h \cdot \nabla (2c_h^{n+1} - c_h^n)\right\rVert^2_{L^2(\OGamma)} + \frac{h  \epsilon^2}{2} \left\lVert \textbf{n}_h \cdot \nabla (c_h^{n+1} - 2c_h^n + c_h^{n-1}\right\rVert^2_{L^2(\OGamma)} \cl
&\quad - \frac{h  \epsilon^2}{2}\left\lVert \textbf{n}_h \cdot \nabla c_h^{n} \right\rVert^2_{L^2(\OGamma)} - \frac{h  \epsilon^2}{2}\left\lVert \textbf{n}_h \cdot \nabla (2c_h^{n} - c_h^{n-1}) \right\rVert^2_{L^2(\OGamma)} = - \frac{2\Delta t}{\rho} (M(\widetilde c^n)\nablaGh \mu_h^{n+1}, \nablaGh \mu_h^{n+1})  \cl
&\quad - \frac{ 2h\Delta t}{\rho} \int_{\OGamma} (\textbf{n}_h \cdot \nabla \mu_h^{n+1})(\textbf{n}_h \cdot \nabla \mu_h^{n+1})dx, \el
\end{align}
which corresponds to \eqref{eq:e_stab}. 
\end{proof}

\subsection{Implementation}\label{sec:impl}

%One of the advantages of SAV schemes is the ease of implementation. In fact, 
Schemes \eqref{dissaveq1}--\eqref{dissaveq3} and \eqref{dissaveq21}--\eqref{dissaveq23} can be conveniently rewritten
as relatively minor modifications of ``standard'' mixed TraceFEM for the surface Cahn--Hilliard problem.

By plugging $r^{n+1}_h$ obtained from  
\eqref{dissaveq3} into eq.~\eqref{dissaveq2}, we can rewrite problem \eqref{dissaveq1}--\eqref{dissaveq3}
as: Given $c_0$ and the associated $E_1(c_0)$ and $r_0$ \eqref{eq:aux_v}, for $n \geq 0$ at time step $t^{n+1}$
find $(c_h^{n+1},\mu_h^{n+1}) \in V_h \times V_h$
\begin{align}
\frac{\rho}{\Delta t}(c_h^{n+1}, v_h)&+ (M(\widetilde c^n)\nablaGh \mu_h^{n+1},\nablaGh v_h) +h \int_{\OGamma} (\textbf{n}_h \cdot \nabla \mu_h^{n+1})(\textbf{n}_h \cdot \nabla v_h)dx=\frac{\rho}{\Delta t}(c_h^{n}, v_h),  \label{eq:SAM_im3} \\
(\mu_h^{n+1}, q_h) & - \epsilon^2(\nabla c_h^{n+1}, \nabla q_h) - h^{-1} \epsilon^2\int_{\OGamma} (\textbf{n}_h \cdot \nabla c_h^{n+1})(\textbf{n}_h \cdot \nabla q_h)dx\cl  &-\frac{1}{2{E_1(c_h^n) }}(f_0'(c_h^n),c_h^{n+1} )(f_0'(c_h^n),q_h) \cl
&\quad = \frac{r_h^n}{\sqrt{E_1(c_h^n) }}(f_0'(c_h^n),q_h)  - \frac{1}{2{E_1(c_h^n) }}(f_0'(c_h^n), c_h^n)(f_0'(c_h^n),q_h) \label{eq:SAM_im4} 
\end{align}
for all $(v_h,q_h) \in V_h \times V_h $. The only differences between \eqref{eq:SAM_im3}--\eqref{eq:SAM_im4} and
a standard TraceFEM for the surface Cahn--Hilliard problem with the implicit Euler scheme for time discretization 
are the additional last term at the left-hand side in \eqref{eq:SAM_im4}, which corresponds to a rank-one matrix in the algebraic form of the problem,  and the modified  terms at the right-hand side in \eqref{eq:SAM_im4}.
At every time step $t^{n+1}$, the value of the auxiliary variable is computed with \eqref{dissaveq3}.

In a similar way, we plug $r^{n+1}_h$ obtained from  
\eqref{dissaveq23} into eq.~\eqref{dissaveq22} and rewrite problem \eqref{dissaveq21}--\eqref{dissaveq23}
as: Given $c_0$ and the associated $E_1(c_0)$ and $r_0$ \eqref{eq:aux_v}, find $(c_h^{1},\mu_h^{1}) \in V_h \times V_h$
such that \eqref{eq:SAM_im3}-\eqref{eq:SAM_im4} hold and get $r_h^{1}$ from \eqref{dissaveq3}, 
then for $n \geq 1$ at time step $t^{n+1}$ find $(c_h^{n+1},\mu_h^{n+1}) \in V_h \times V_h$ %(for $\alpha>0$)
such that 
\begin{align}
&\frac{\rho}{2\Delta t}({3c_h^{n+1}}, v_h) + (M(\widetilde c^n)\nablaGh \mu_h^{n+1}, \nablaGh v_h) + h \int_{\OGamma} (\textbf{n}_h \cdot \nabla \mu_h^{n+1})(\textbf{n}_h \cdot \nabla v_h)dx = b_c^{n+1}, \label{eq:SAV_im1} \\
&(\mu_h^{n+1}, q_h) - \epsilon^2(\nablaGh c_h^{n+1}, \nablaGh q_h)  - h^{-1} \epsilon^2\int_{\OGamma} (\textbf{n}_h \cdot \nabla c_h^{n+1})(\textbf{n}_h \cdot \nabla q_h)dx  \cl
&\quad -\frac{1}{2{E_1(c_h^n) }}(f_0'(\tilde{c}^n_h),c_h^{n+1} )(f_0'(\tilde{c}^n_h),q_h) = b_\mu^{n+1} ,\label{eq:SAV_im2}    
\end{align}
for all $(v_h,q_h) \in V_h \times V_h $. The forcing terms in \eqref{eq:SAV_im1}--\eqref{eq:SAV_im2} are computed from known quantities
\begin{align}
&b_c^{n+1} = \frac{2\rho}{\Delta t}({c_h^{n}}, v_h) - \frac{\rho}{2\Delta t}({c_h^{n-1}}, v_h), \cl
&b_\mu^{n+1} = \frac{4r_h^n}{3\sqrt{E_1(c_h^n) }}(f_0'(\tilde{c}^n_h),q_h) - \frac{r_h^{n-1}}{3\sqrt{E_1(c_h^n) }}(f_0'(\tilde{c}^n_h),q_h)- \frac{2}{3{E_1(c_h^n) }}(f_0'(\tilde{c}^n_h), c_h^{n})(f_0'(\tilde{c}^n_h),q_h)\cl
& \quad + \frac{1}{6{E_1(c_h^n) }}(f_0'(\tilde{c}^n_h), c_h^{n-1})(f_0'(\tilde{c}^n_h),q_h). \el
\end{align}
These forcing terms and the last term at the left-hand side in \eqref{eq:SAV_im2} are the only differences
with respect to a standard TraceFEM for the surface Cahn--Hilliard problem with BDF2 for time discretization.
At every time step $t^{n+1}$, the value of the auxiliary variable is computed with \eqref{dissaveq23}.

For the numerical results in Sec.~\ref{sec:num_res}, we use the SAV-BDF2 scheme. 
In summary, we implement is as follows:
\begin{itemize}
\item[-] \emph{Step 0}: from $c_0$, get $E_1(c_0)$ as in \eqref{eq:E1} and $r_0$ from \eqref{eq:aux_v}.
\item[-] \emph{Step 1}: at $t^1 = \Delta t$, solve \eqref{eq:SAM_im3}-\eqref{eq:SAM_im4} to get  $(c_h^{1},\mu_h^{1})$ and compute $r_h^{1}$ from \eqref{dissaveq3}.
\item[-] \emph{Step 2}: at time $t^{n+1}$, $n \geq 1$, solve \eqref{eq:SAV_im1}-\eqref{eq:SAV_im2} to get $(c_h^{n+1},\mu_h^{n+1})$ and compute $r_h^{n+1}$ from \eqref{dissaveq23}.
\end{itemize}

The implementation described above differs from the one presented in the original papers on SAV schemes for gradient flows \cite{huang2020highly,LiShen2019,Shen2019_SIAMrev}. In those papers, the properties of the finite difference method on uniform grids were utilized to enhance computational efficiency. However, since we have chosen to work with finite elements for greater geometric flexibility, we cannot leverage the same properties. As a result, we decided to rewrite the SAV scheme as a minor modification of a standard finite element discretization to simplify the implementation process. Consequently, the additional terms introduced by the SAV method lead to dense matrices in the associated linear systems. 

\section{Adaptive time-stepping scheme}\label{sec:at_scheme}

The dynamic response of the Cahn--Hilliard equation exhibits significant temporal scale variations. Initially, a rapid phase of spinodal decomposition is observed, which can be adequately captured with a small time step (e.g., $\Delta t = \mathcal{O}(10^{-5})$). This phase is followed by a slower process of domain coarsening and growth, for which a larger time step can be employed (e.g., $\Delta t$ ranging from $10^{-1}$ to $10$). As the phase separation process approaches equilibrium, the time step can be further increased (e.g., up to $\Delta t = \mathcal{O}(10^{3})$).
In the literature, various approaches can be found where different time-step sizes are manually set during the simulation. See, e.g.,  \cite{Yushutin_IJNMBE2019}. However, a more intelligent approach to handle such a wide 
range of temporal scales is to employ an adaptive-in-time method that selects the time step based on an accuracy criterion.

We choose to apply the adaptive time stepping technique first presented in~\cite{gomez2008isogeometric}.
Before explaining the algorithm and how the time step is chosen, let us write the time discretization of the space-discrete 
version of problem \eqref{saveq3}-\eqref{eq:sys_CH2_weak} using the BDF2 scheme with a variable 
time step. Let $\Delta t^n = t^{n+1}- t^n$ be the variable time step and set $q^n = {\Delta t^n}/{\Delta t^{n-1}}$.
At time $t^{n+1}$, the time derivative is approximated as follows
 \begin{align*}%\label{BDF2}
&  \frac{\partial c}{\partial t} \approx \frac{\alpha c^{n+1}- \beta c^{n}+ \gamma c^{n-1}}{\Delta t}, \quad \alpha = \frac{1+2q^n}{1+q^n}, \quad \beta = 1+q^n, \quad \gamma = \frac{(q^n)^2}{1+q^n}.
\end{align*}
Then, the fully discrete problem reads: for $n \geq 1$ at time step $t^{n+1}$
find $(c_h^{n+1},\mu_h^{n+1}, r_h^{n+1}) \in V_h \times V_h \times \mathbb{R}$ %(for $\alpha>0$)
such that 
\begin{align}
	& \frac{\rho}{\Delta t} \left({\alpha c_h^{n+1} - \beta c_h^{n}+ \gamma c_h^{n-1}}, v_h\right) = - (M(\tilde{c}^n_h)\nablaGh \mu_h^{n+1}, \nablaGh v_h) - h \int_{\OGamma} (\textbf{n}_h \cdot \nabla \mu_h^{n+1})(\textbf{n}_h \cdot \nabla v_h)dx, \label{dissaveq21_ta}\\
	& \alpha r_h^{n+1} - \beta r_h^n+ \gamma r_h^{n-1} = \frac{1}{2\sqrt{E_1(\tilde{c}^n_h)}}(f_0'(\tilde{c}^n_h),\alpha c_h^{n+1} - \beta c_h^{n}+\gamma c_h^{n-1}), \label{dissaveq23_ta}  
\end{align}
and \eqref{dissaveq22} hold for all $(v_h,q_h) \in V_h \times V_h $. The formula to compute $\tilde{c}^n_h$ is \eqref{BDF2}.

For the implementation of \eqref{dissaveq22},\eqref{dissaveq21_ta},\eqref{dissaveq23_ta}, we proceed as explained in Sec.~\ref{sec:impl}, 
i.e., we plug $r^{n+1}_h$ obtained from  
\eqref{dissaveq23_ta} into eq.~\eqref{dissaveq22} and rewrite problem \eqref{dissaveq22},\eqref{dissaveq21_ta},\eqref{dissaveq23_ta}
as: for $n \geq 1$ at time step $t^{n+1}$ find $(c_h^{n+1},\mu_h^{n+1}) \in V_h \times V_h$ %(for $\alpha>0$)
such that 
\begin{align}
&\frac{\rho}{\Delta t}({\alpha c_h^{n+1}}, v_h) + (M(\widetilde c^n)\nablaGh \mu_h^{n+1}, \nablaGh v_h) + h \int_{\OGamma} (\textbf{n}_h \cdot \nabla \mu_h^{n+1})(\textbf{n}_h \cdot \nabla v_h)dx = d_c^{n+1}, \label{eq:SAV_im1_ta} \\
&(\mu_h^{n+1}, q_h) - \epsilon^2(\nablaGh c_h^{n+1}, \nablaGh q_h)  - h^{-1} \epsilon^2\int_{\OGamma} (\textbf{n}_h \cdot \nabla c_h^{n+1})(\textbf{n}_h \cdot \nabla q_h)dx  \cl
&\quad -\frac{1}{2{E_1(c_h^n) }}(f_0'(\tilde{c}^n_h),c_h^{n+1} )(f_0'(\tilde{c}^n_h),q_h) = d_\mu^{n+1} ,\label{eq:SAV_im2_ta}    
\end{align}
for all $(v_h,q_h) \in V_h \times V_h $. The forcing terms in \eqref{eq:SAV_im1_ta}-\eqref{eq:SAV_im2_ta} are computed from known quantities
\begin{align}
&d_c^{n+1} = \frac{\rho}{\Delta t}(\beta {c_h^{n}}, v_h) - \frac{\rho}{\Delta t}(\gamma {c_h^{n-1}}, v_h), \cl
&d_\mu^{n+1} = \frac{\beta r_h^n}{\alpha \sqrt{E_1(c_h^n) }}(f_0'(\tilde{c}^n_h),q_h) - \frac{\gamma r_h^{n-1}}{\alpha \sqrt{E_1(c_h^n) }}(f_0'(\tilde{c}^n_h),q_h)- \frac{\beta}{2 \alpha{E_1(c_h^n) }}(f_0'(\tilde{c}^n_h), c_h^{n})(f_0'(\tilde{c}^n_h),q_h)\cl
& \quad + \frac{\gamma}{2 \alpha{E_1(c_h^n) }}(f_0'(\tilde{c}^n_h), c_h^{n-1})(f_0'(\tilde{c}^n_h),q_h). \el
\end{align}

Now, let us describe the adaptive time stepping technique. Let us call $c^{n+1}_{h,1}$ and  $c^{n+1}_{h,2}$ the solutions
at time $t^{n+1}$ of \eqref{eq:SAM_im3}-\eqref{eq:SAM_im4} and \eqref{eq:SAV_im1_ta}-\eqref{eq:SAV_im2_ta}, respectively.
We define
\begin{align}\label{eq:e}
e^{n+1} = \frac{\lVert c^{n+1}_{h,1} - c^{n+1}_{h,2}\rVert}{\lVert c^{n+1}_{h,2} \rVert},
\end{align}
which is taken as input to update the time step:
\begin{align}\label{eq:F}
\Delta t^{n+1} \gets F(e^{n+1}, \Delta t^{n+1}) =  \zeta \Big(\frac{{tol}}{e^{n+1}}\Big)^{1/2}\Delta t^{n+1},
\end{align}
where $\zeta$ is a ``safety'' coefficient and $tol$ is a user prescribed tolerance. Algorithm \ref{alg:cap} describes the steps to take at
time $t^{n+1}$ in order to adapt the time step. 

\begin{algorithm}
	\caption{Adaptive time-stepping algorithm at time $t^{n+1}$}\label{alg:cap}
\begin{algorithmic}[1]
	\Statex \textbf{Given $c_{n}$ and $\Delta t^n$}
	\State Solve \eqref{eq:SAM_im3}-\eqref{eq:SAM_im4} with $\Delta t^{n+1} = \Delta t^n$ to get $c^{n+1}_{h, 1}$
	\State Solve \eqref{eq:SAV_im1_ta}-\eqref{eq:SAV_im2_ta} with $\Delta t^{n+1} = \Delta t^n$ to get $c^{n+1}_{h,{2}}$ 
	\State Compute $e^{n+1}$ using \eqref{eq:e}
	\If{$e^{n+1} > tol$} 
	\State Update $\Delta t^{n+1}$ using \eqref{eq:F}
	\State \textbf{goto 1}
	\Else
	\State Set $\Delta t^{n+1} = F(e_{n+1}, \Delta t^{n+1})$
	\EndIf
	\Statex \textbf{Continue to $t^{n+2}$}
\end{algorithmic}
\end{algorithm}

Let $r^{n+1} = \Delta t^{n+1}/\Delta t^{n}$ be the time step ratio. Approximately 40 years ago, it was demonstrated that a variable step BDF2 method for ordinary initial-value problems is zero-stable if $r^{n+1} < 1+\sqrt{2}$ \cite{grigorieff1983stability}. Advancing beyond this classical result has proven to be a challenging task, which has recently gained attention. Through the utilization of techniques involving discrete orthogonal convolution kernels, it has been possible to establish that variable time step BDF2 methods are computationally robust, with $0 < r^{n+1} < 3.561$, for linear diffusion models \cite{liao2021analysisdiffusion}, a phase-field crystal model \cite{liao2022adaptive}, and the molecular beam epitaxial model without slope selection \cite{liao2021analysis}. These techniques have been extended to the Cahn-Hilliard model in \cite{liao2022mesh}.
The complexity associated with proving the energy stability of the scheme presented in this section is significant, to the extent that it could be the subject of a separate research paper. Therefore, we will not delve into it in this work.

\section{Numerical results}\label{sec:num_res}

After validating the accuracy of the numerical methods presented in Sec.~\ref{sec:impl}, 
we compare the numerical results obtained with our SAV methods against the results obtained
with a stabilized scheme inspired from~\cite{Shen_Yang2010} and presented in \cite{Yushutin_IJNMBE2019}.
We will start by comparing the numerical results produced by the different methods on a sphere in Sec.~\ref{sec:res_spehre}.
Then, in Sec.~\ref{sec:res_cell} we will present results on a
more complex surface that represents an idealized cell.

For implementation of the methods in Sec.~\ref{sec:st_disc} and \ref{sec:at_scheme}, we use
open source Finite Element package DROPS~\cite{DROPS}.

\subsection{Convergence test}\label{sec:conv_test}

To assess our implementation of the SAV schemes presented in Sec.~\ref{sec:impl}, 
we consider the following exact solution to the non-homogeneous surface Cahn--Hilliard equations 
on the unit sphere, centered at the origin:
\begin{align}
c^*(t, \bx) = \frac{1}{2} \left(1 + \tanh\frac{x_3} {2 \sqrt{2} \epsilon} \right), \quad t\in[0,1]. \label{eq:exact}
\end{align}
Here, $\bx = (x_1, x_2, x_3)^T$ denotes a point in $\mathbb{R}^3$.
The exact chemical potential $\mu^*$ can be readily computed from eq.~\eqref{eq:sys_CH2}
using the free energy per unit surface in \eqref{eq:f0} and the above $c^*$.
The non-zero forcing term is computed by plugging $c^*$ and $\mu^*$ into \eqref{eq:sys_CH1}.
We set $\rho = 1$ and mobility $M$ as in \eqref{eq:M}. In \eqref{eq:aux_v}, we take $C = 1$.
Since it is known that smaller values $\epsilon$ are numerically challenging (see, e.g., \cite{feng2004error,Shen_Yang2010}),
we consider decreasing values of $\epsilon$: $\epsilon = 1, 0.1, 0.05$.

We characterize the surface $\Gamma$ as the zero level set of function $\phi(\bx) = \|\bx\|_2 -1$,
and embed $\Gamma$ in an outer cubic domain $\Omega=[-5/3,5/3]^3$.
The initial triangulation $\T_{h_\ell}$ of $\Omega$ consists of 8 sub-cubes,
where each of the sub-cubes is further subdivided into 6 tetrahedra.
Further, the mesh is refined towards the surface, and $\ell\in\Bbb{N}$ denotes the level of refinement, with the associated 
mesh size $h_\ell= \frac{10/3}{2^{\ell+1}}$. Fig.~\ref{fig:conv_test_sol} shows the approximation of \eqref{eq:exact} with $\epsilon = 0.05$
computed with mesh level $\ell = 6$ and a magnified view of the interface thickness with the bulk mesh near the surface.
The time step is also refined with the mesh as specified below. Time step adaptivity is not used for this test.

\begin{figure}[htb!]
	\centering
	\begin{tikzpicture}
		\node at (9,0) {\includegraphics[width=.2\linewidth]{./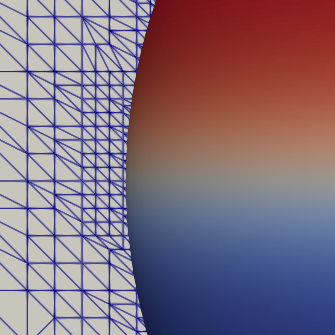}} ;
		\node at (15,-1) {\includegraphics[width=.3\linewidth]{./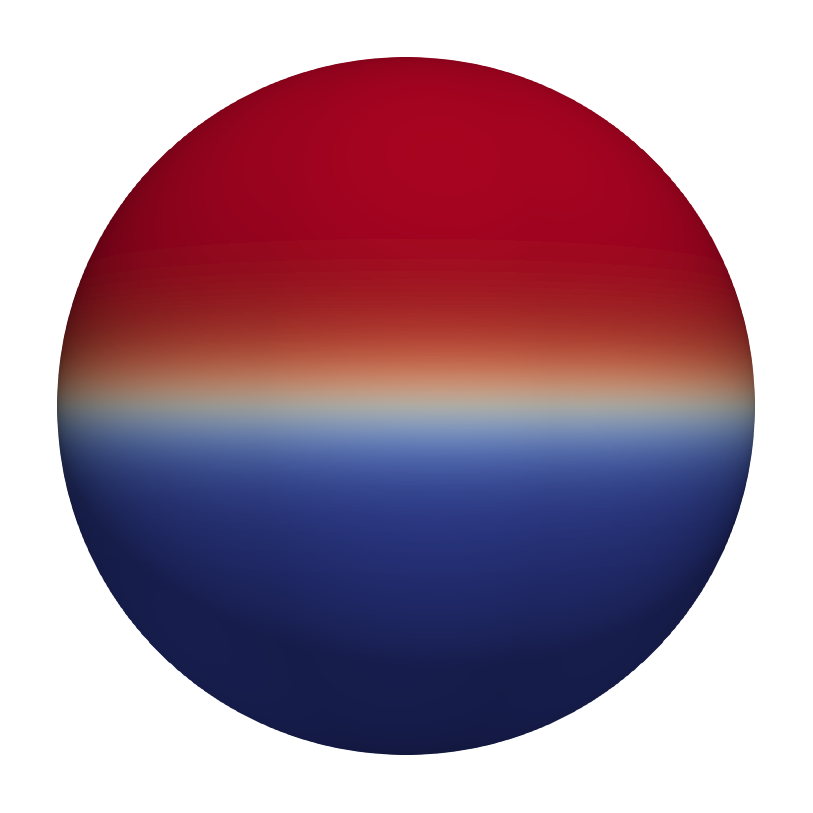}};
		\fill[opacity=0.4, color=gray] (12.55,-1.75) -- (14.05,-1.75) -- (14.05,-0.25) -- (12.55,-0.25) --cycle;
		\draw[color=green, line width=1.5] (12.55,-1.75) -- (14.05,-1.75) -- (14.05,-0.25) -- (12.55,-0.25) --cycle;
		\draw[color=green, line width=1.5] (12.55,-1.75) -- (7.5,-1.55);
		\draw[color=green, line width=1.5] (14.05,-0.25) -- (10.5, 1.55);
		\draw[color=green, line width=1.5] (7.5,1.55) -- (10.5, 1.55) -- (10.5, -1.55)--(7.5, -1.55) --cycle;		
	\end{tikzpicture}
	\caption {Approximation of exact solution \eqref{eq:exact} with $\epsilon = 0.05$
computed with mesh level $\ell = 6$ and a magnified view of the interface thickness 
with the bulk mesh near the surface.}\label{fig:conv_test_sol}
\end{figure}

%\begin{figure}[H]
%	\centering
%	\includegraphics[width=.3\linewidth]{./img/convtest.png}
%	\includegraphics[width=.3\linewidth]{./img/conv_with_mesh.png}
%	
%	\caption{ \YP{Version 2} Convergence test set up on mesh level 6 with $\epsilon = 0.05$ \anna{Change this figure to show the mesh too}}
%\end{figure}

%\YP{rate of decrease from $X$ to $Y$ is $\log_2 \frac{X}{Y}$.}

%\anna{Run the simulation for mesh level 6 too.++}

Fig.~\ref{fig:L2errors} shows the evolution of the $L_2$ errors of $c$ computed with the SAV-BDF1 and SAV-BDF2 methods
for $\epsilon = 0.05, 0.1, 1$. We used $\mathbb{P}^1$ elements and for each panel in Fig.~\ref{fig:L2errors}
we report the $L_2$ errors associated to four mesh refinement levels. We see that in all the cases the errors increase slightly at
the beginning of the time interval and then they tend to reach a plateau. The thinner the interface between phases is (i.e., the smaller
$\epsilon$), the faster the plateau is reached. In the case of the smallest $\epsilon$, i.e., $\epsilon = 0.05$,
Fig.~\ref{fig:mod_e} displays the evolution of modified energy \eqref{eq:mod_e_BDF1}, which is associated to the SAV-BDF1
method, and modified energy \eqref{eq:mod_e_BDF2}, which is associated to the SAV-BDF2 method, for mesh level $\ell = 5$. 
As expected, the modified energies decay in time.

\begin{figure}[htb!]
	\centering
	\begin{overpic}[width=.32\textwidth,grid=false]{./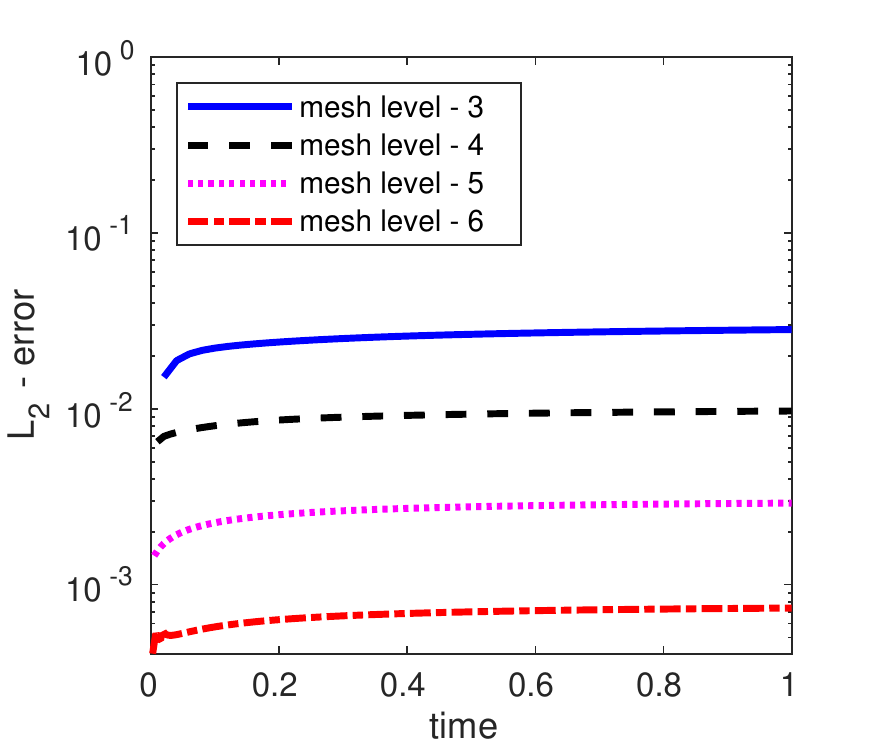}
			\put(30,82){\small{$\epsilon = 0.05$, BDF1}}
		\end{overpic}
	\begin{overpic}[width=.32\textwidth,grid=false]{./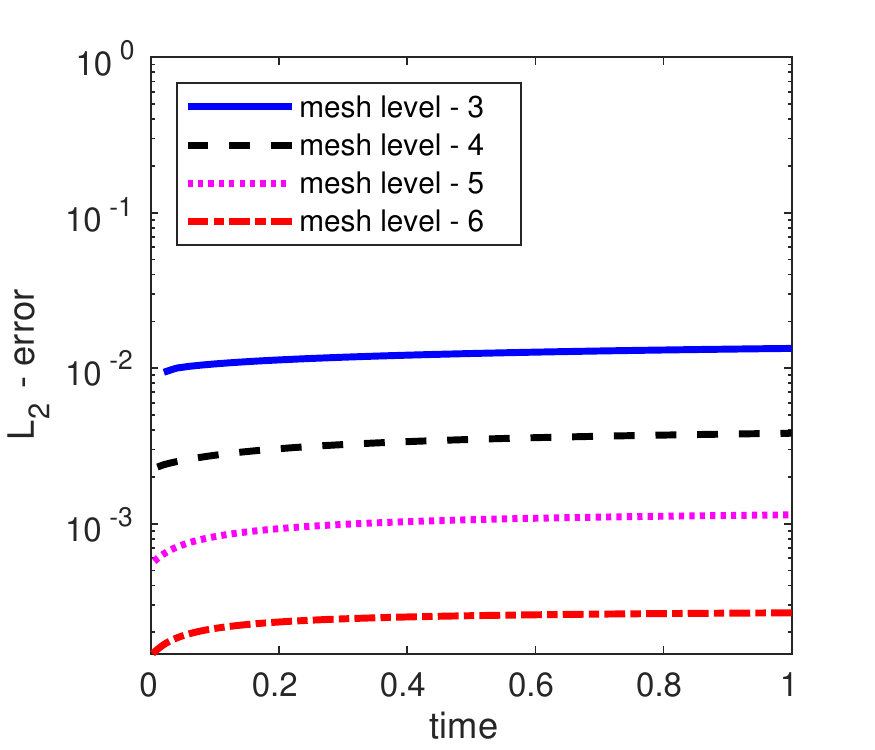}
			\put(32,82){\small{$\epsilon = 0.1$, BDF1}}
		\end{overpic}
	\begin{overpic}[width=.32\textwidth,grid=false]{./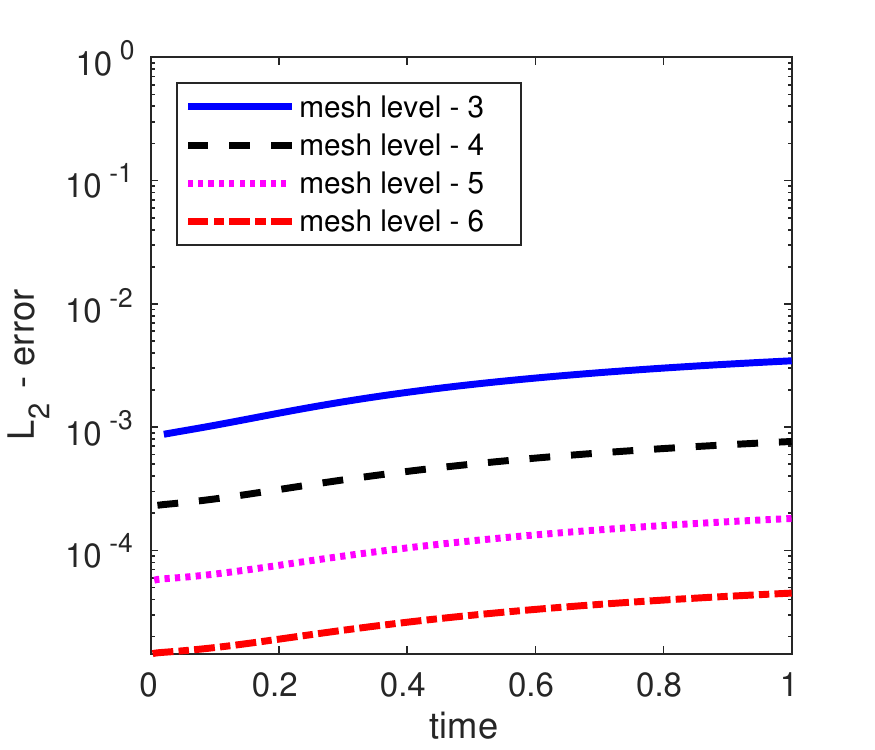} 
				\put(34,82){\small{$\epsilon = 1$, BDF1}}
		\end{overpic}
	\\
	\vskip .2cm
	\begin{overpic}[width=.32\textwidth,grid=false]{./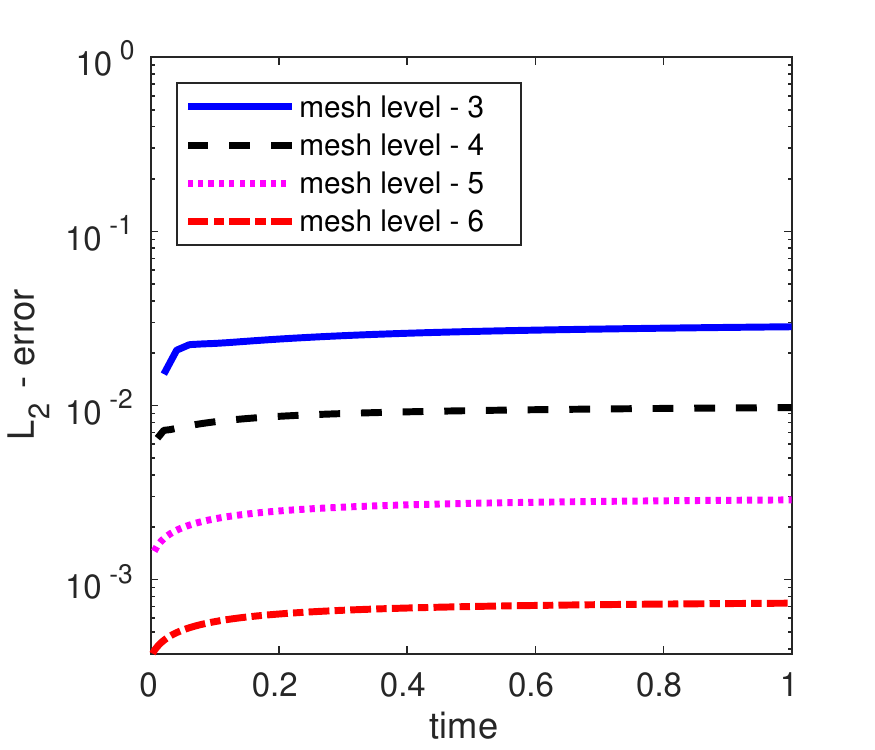}
				\put(30,82){\small{$\epsilon = 0.05$, BDF2}}
		\end{overpic}
	\begin{overpic}[width=.32\textwidth,grid=false]{./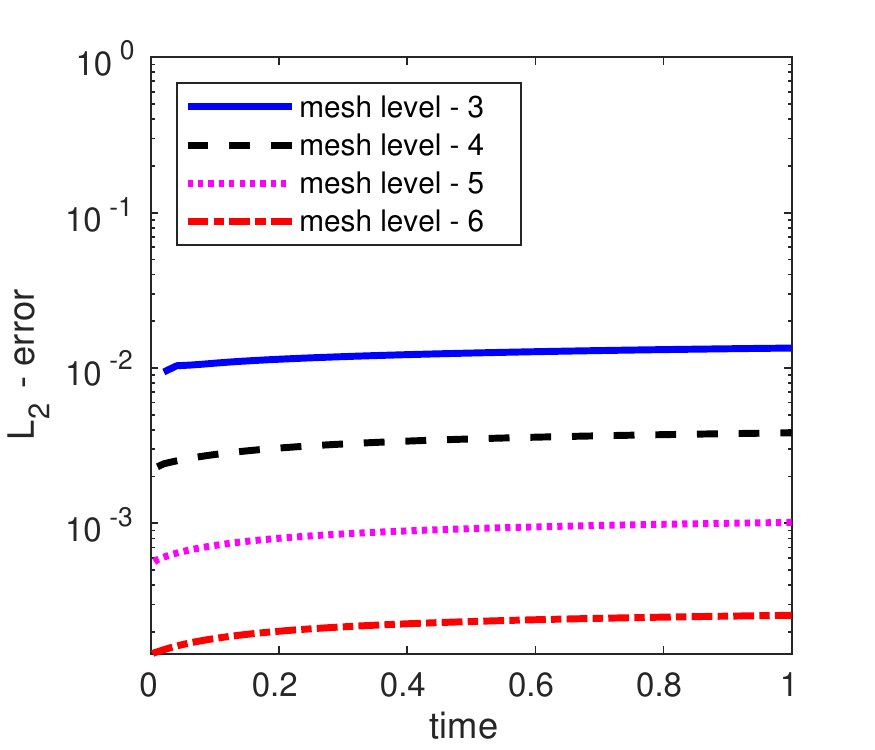}
				\put(32,82){\small{$\epsilon = 0.1$, BDF2}}
		\end{overpic}
	\begin{overpic}[width=.32\textwidth,grid=false]{./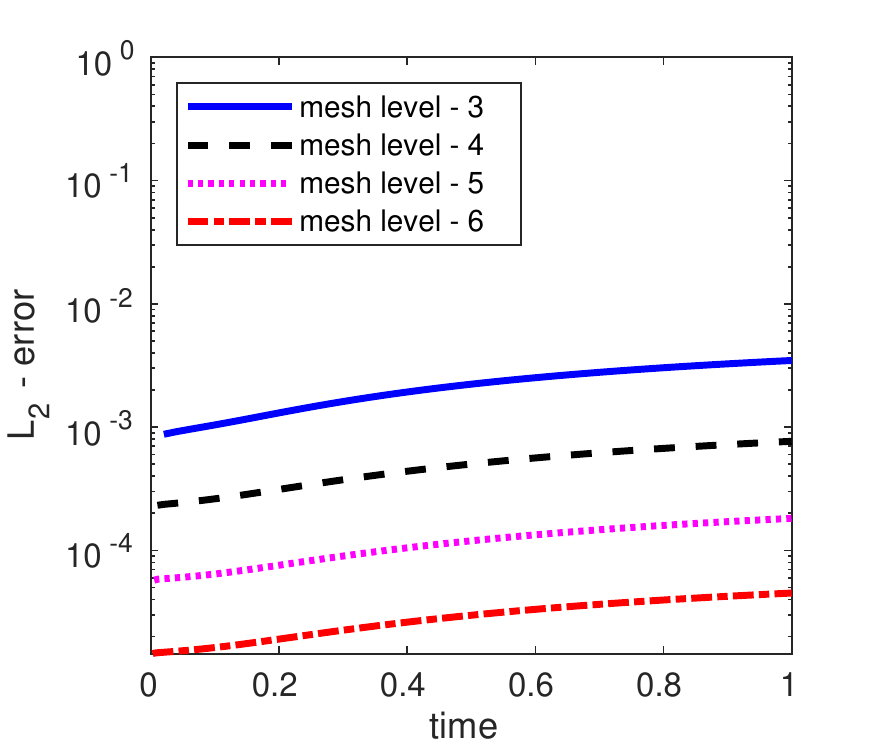}
					\put(34,82){\small{$\epsilon = 1$, BDF2}}
		\end{overpic}
	\caption{Convergence test: evolution of the $L_2$ errors of $c$ computed with the SAV-BDF1 method (top row)
	or SAV-BDF2 method (bottom row) for $\epsilon = 0.05$ (left), $\epsilon = 0.1$ (center), and $\epsilon = 1$ (right).}\label{fig:L2errors}
\end{figure}

\begin{figure}[htb!]
	\centering
	\begin{overpic}[width=.4\textwidth,grid=false]{./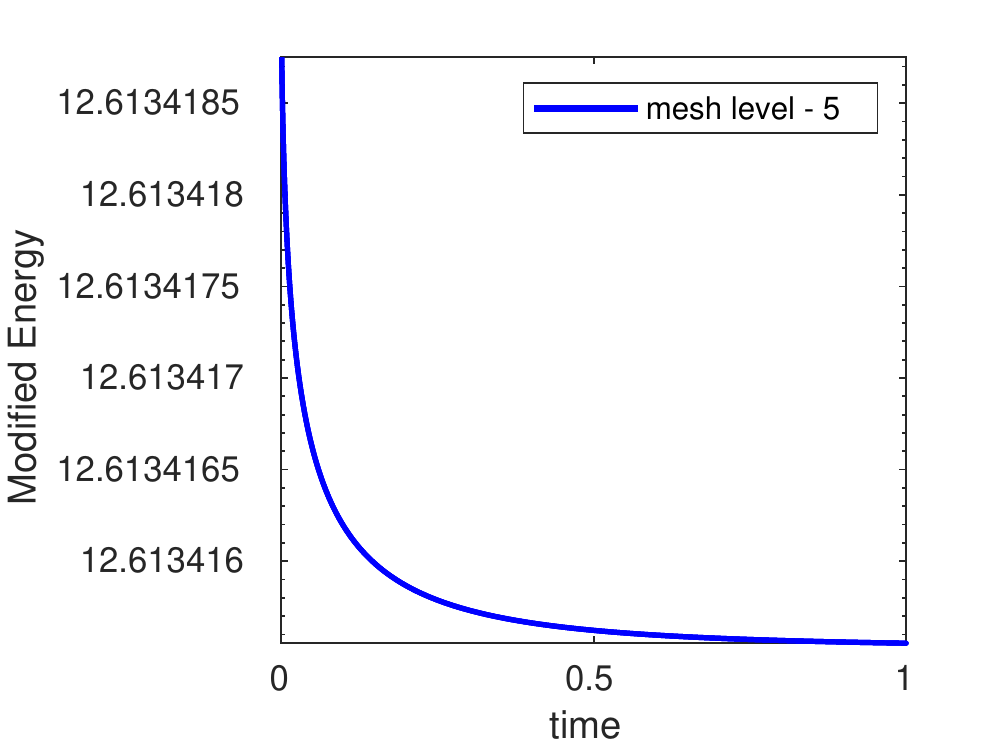}
	\put(54,72){\small{BDF1}}
		\end{overpic}
	\begin{overpic}[width=.4\textwidth,grid=false]{./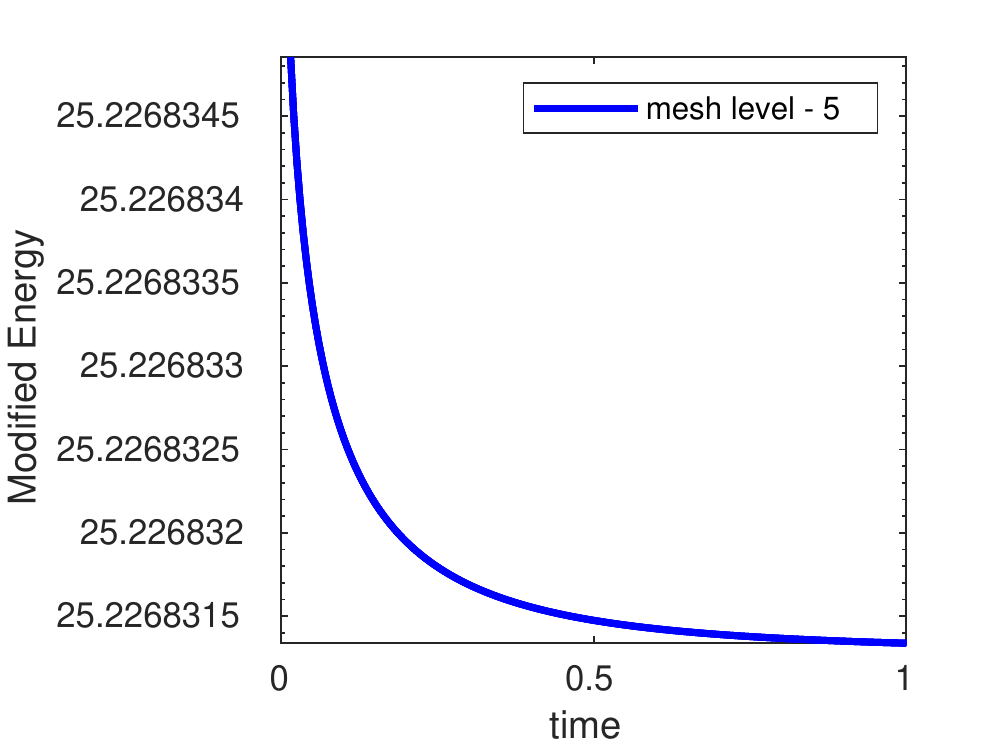}
		\put(54,72){\small{BDF2}}
		\end{overpic}
	\caption{Convergence test, $\epsilon = 0.05$: decay of modified energy  \eqref{eq:mod_e_BDF1} (left) and  \eqref{eq:mod_e_BDF2} 
	(right) for mesh level $\ell = 5$.}\label{fig:mod_e}
\end{figure}

Tables \ref{tab:1} and \ref{tab:2} report the $L_2$ errors of $c$ at the end of the time interval (i.e.,
$t = 1$) computed with the SAV-BDF1 and SAV-BDF2 method, respectively. Mesh refinement level and 
associated time steps are reported in the tables, which provide the order of convergence too. 
We see that while the $L_2$ errors are somewhat different, the order of convergence is the same.  
It is around 2, especially when going from $\ell = 5$ to $\ell = 6$, which is the optimal order of convergence
for $\mathbb{P}^1$ elements. We believe that the order of convergence is not spoilt when using BDF1 for time discretization
because the time step value is small enough to prevent the time discretization error from dominating over the space discretization
error.  
Table \ref{tab:2} can be compared with Table \ref{tab:3}, which provides $L_2$ errors of $c$ at $t = 1$ computed 
with the stabilized method in \cite{Yushutin_IJNMBE2019} and BDF2, together with the rates of convergence. Not just the convergence
rates are the same, but the errors are also very similar. We have highlighted in red
the digits in Table \ref{tab:3} that differ from Table \ref{tab:2}.

\begin{table}[htb!]
\centering
	\begin{tabular}{|c|c|c|c|c|c|c|c|}
		\hline
		& & \multicolumn{2}{c|}{$\epsilon = 0.05$} & \multicolumn{2}{c|}{$\epsilon = 0.1$} & \multicolumn{2}{c|}{$\epsilon = 1$}  \\
		\hline
		mesh level & $\Delta t$ & error & rate & error & rate & error & rate \\
		\hline
		3 & 0.02 & 2.8247 $\cdot 10^{-2}$  &  & 1.3409$\cdot 10^{-2}$ &  & 0.3453$\cdot 10^{-2}$ &  \\
		\hline
		4 & 0.01 & 0.9720 $\cdot 10^{-2}$  & 1.54 & 0.3816$\cdot 10^{-2}$ & 1.81 & 0.0765$\cdot 10^{-2}$  & 2.18 \\
		\hline
		5 & 0.005 & 0.2909$\cdot 10^{-2}$  &  1.76 & 0.1139$\cdot 10^{-2}$ & 1.91 & 0.0181$\cdot 10^{-2}$ & 2.07 \\
		\hline
		6 & 0.0025 & 0.0735$\cdot 10^{-2}$  &  1.96 & 0.0267$\cdot 10^{-2}$ & 1.98 & {0.0045$\cdot 10^{-2}$} & 2.01 \\
		\hline
	\end{tabular}\caption{Convergence test, $\epsilon = 0.05, 0.1, 1$: $L_2$ errors of $c$ at $t = 1$ computed with the SAV-BDF1 method and $\mathbb{P}^1$ elements for different meshes and time steps, together with the rates of convergence.}\label{tab:1}
\end{table}

\begin{table}[htb!]
\centering
	\begin{tabular}{|c|c|c|c|c|c|c|c|}
		\hline
		& & \multicolumn{2}{c|}{$\epsilon = 0.05$} & \multicolumn{2}{c|}{$\epsilon = 0.1$} & \multicolumn{2}{c|}{$\epsilon = 1$}  \\
		\hline
		mesh level & $\Delta t$ & error & rate & error & rate & error & rate \\
		\hline
		3 & 0.02 & 2.8338 $\cdot 10^{-2}$  &  & 1.3438$\cdot 10^{-2}$ &  & 0.3474$\cdot 10^{-2}$ &  \\
		\hline
		4 & 0.01 & 0.9727 $\cdot 10^{-2}$  & 1.54 & 0.3824$\cdot 10^{-2}$ & 1.81 & 0.0767$\cdot 10^{-2}$  & 2.18 \\
		\hline
		5 & 0.005 & 0.2869$\cdot 10^{-2}$  &  1.76 & 0.1013$\cdot 10^{-2}$ & 1.91 & 0.0181$\cdot 10^{-2}$ & 2.07 \\
		\hline
		6 & 0.0025 & 0.0732$\cdot 10^{-2}$  &  1.96 & 0.0255$\cdot 10^{-2}$ & 1.98 & 0.0045$\cdot 10^{-2}$ & 2.01 \\
		\hline
	\end{tabular}\caption{Convergence test, $\epsilon = 0.05, 0.1, 1$: $L_2$ errors of $c$ at $t = 1$ computed with the SAV-BDF1 method and $\mathbb{P}^1$ elements for different meshes and time steps, together with the rates of convergence.}\label{tab:2}
\end{table}

\begin{table}[htb!]
\centering
\begin{tabular}{|c|c|c|c|c|c|c|c|}
	\hline
	& & \multicolumn{2}{c|}{$\epsilon = 0.05$} & \multicolumn{2}{c|}{$\epsilon = 0.1$} & \multicolumn{2}{c|}{$\epsilon = 1$}  \\
	\hline
	mesh level & $\Delta t$ & error & rate & error & rate & error & rate \\
	\hline
	3 & 0.02 & 2.833{\color{red}5}$\cdot 10^{-2}$ &  & 1.343{\color{red}4}$\cdot 10^{-2}$ &  & 0.347{\color{red}5} $\cdot 10^{-2}$ &  \\
	\hline
	4 & 0.01 & 0.972{\color{red}5}$\cdot 10^{-2}$  & 1.54 & 0.382{\color{red}3}$\cdot 10^{-2}$ & 1.81 & 0.0767$\cdot 10^{-2}$  & 2.18 \\
	\hline
	5 & 0.005 & 0.2869$\cdot 10^{-2}$  & 1.76  & 0.1013$\cdot 10^{-2}$ & 1.91 & 0.018{\color{red}2} $\cdot 10^{-2}$ & 2.07 \\
	\hline
	6 & 0.0025 & 0.0732$\cdot 10^{-2}$  & 1.96  & 0.0255$\cdot 10^{-2}$ & 1.98 & 0.0045$\cdot 10^{-2}$ &  2.01\\ 
	\hline
	%0.0097250727479472766
\end{tabular}\caption{Convergence test, $\epsilon = 0.05, 0.1, 1$: $L_2$ errors of $c$ at $t = 1$ computed with the stabilized method in \cite{Yushutin_IJNMBE2019}, $\mathbb{P}^1$ elements, and BDF2 for different meshes and time steps, together with the rates of convergence.}\label{tab:3}
\end{table}

The results in this section give us confidence in our implementation of the SAV methods within DROPS. In addition,
they suggest that for the values of $\epsilon$ we consider $\ell = 5$ and $\Delta t = 0.005$ are appropriate levels of 
refinement for mesh size and time step
as they provide small discretization errors and are more computationally efficient than $\ell = 6$ and $\Delta t = 0.0025$. 
Hence, for the results in the next section we will use $\ell = 5$ and $\Delta t = 0.005$.

%\YP{rate of decrease from $X$ to $Y$ is $\log_2 \frac{X}{Y}$.}

%\anna{Use the same number of digits in the tables (see Table 1 as an ex)++}

\subsection{Phase separation on the sphere}\label{sec:res_spehre}

Our interest in surface phase field problems, such as the Cahn--Hilliard equation \cite{Palzhanov2021, WANG2022183898, Yushutin_IJNMBE2019, Yushutin2019, zhiliakov2021experimental}, stems from their practical applications in targeted drug delivery. The phenomenon of lipid phase separation has been utilized to enhance the delivery performance of targeted lipid vesicles \cite{Bandekar2013, KARVE20104409}, as the formation of phase-separated patterns on the vesicle surface has been associated with increased target selectivity, cell uptake, and overall efficacy.
In our previous works \cite{WANG2022183898, zhiliakov2021experimental}, we validated our numerical results obtained using the approaches described in \cite{Palzhanov2021, Yushutin_IJNMBE2019} against laboratory experiments. We achieved good agreement between the numerical and experimental results for different lipid membrane compositions. 

In this paper, we consider 3 membrane compositions. Each membrane composition corresponds to a certain fraction $a$ 
of the sphere surface area (since these vesicles are spherical) covered by one representative phase. In this section, 
we present results for $a = 0.5, 0.3, 0.7$, which are experimentally relevant values.

In order to model an initially homogenous mix of components, 
the initial composition $c_0$ is defined  as a realization of Bernoulli random variable~$c_\text{rand} \sim \text{Bernoulli}(a)$
with mean value $a$, i.e. we set:
\begin{equation}\label{raftIC}
	c_0 \coloneqq c_\text{rand}(\bx)\quad\text{for active mesh nodes $\bx$}.
\end{equation}
As mentioned at the end of the previous section, the interface thickness $\epsilon$ is set to 0.05, which is a realistic value for lipid vesicles.

Let us start with the results obtained with the SAV-BDF2 method without time step adaptivity 
and compare them with the results obtained with the stabilized method in \cite{Yushutin_IJNMBE2019}.
Fig.~\ref{fig:a_50} shows the evolution of phases for $a=0.5$, which means that 50\% of the sphere
surface is covered by the representative phase (red in the figure) and the remaining 50\% is covered by 
the other phases (blue in the figure). There is no observable difference in the spinodal decomposition
and subsequent domain ripening given by the two methods. 

\begin{figure}[htb!]
	\begin{center}
		\begin{overpic}[width=.13\textwidth,grid=false]{./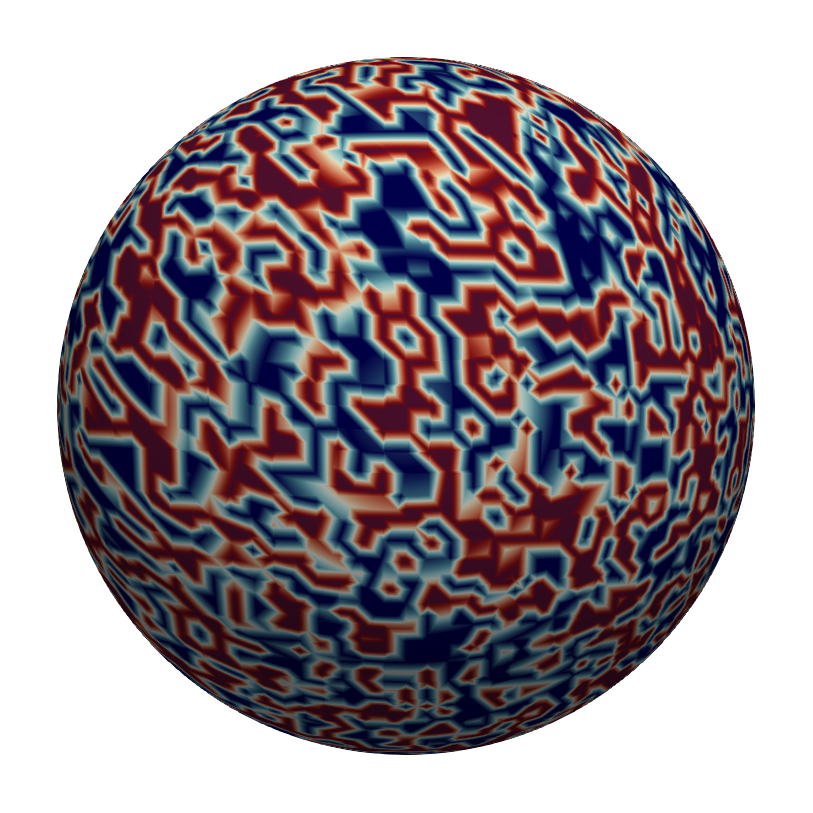}
			\put(30,102){\small{$t = 0$}}
			\put(-70,45){\small{Stabilized}}
		\end{overpic}
		\begin{overpic}[width=.13\textwidth,grid=false]{./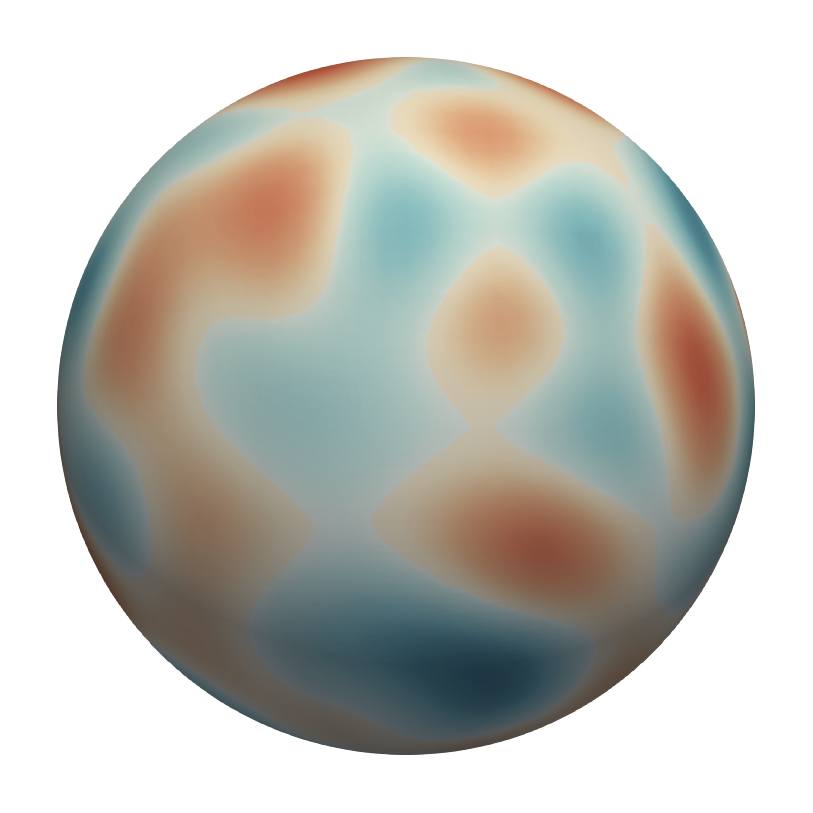}
			\put(30,100){\small{$t = 0.01$}}
		\end{overpic}
		\begin{overpic}[width=.13\textwidth,grid=false]{./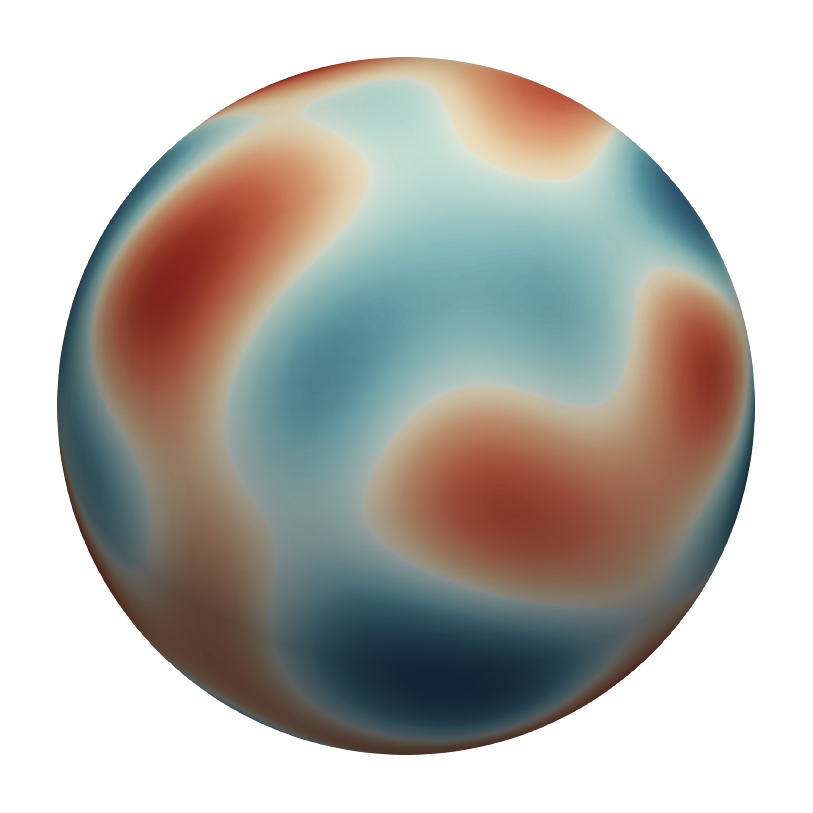}
			\put(30,102){\small{$t = 0.5$}}
		\end{overpic}
		\begin{overpic}[width=.13\textwidth,grid=false]{./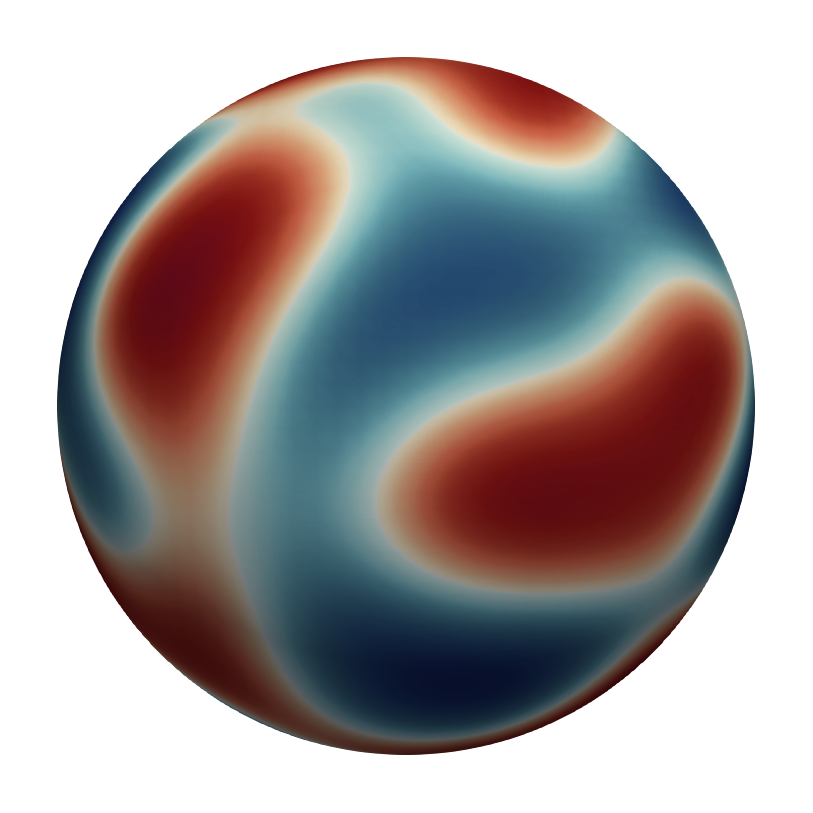}
			\put(30,102){\small{$t = 2$}}
		\end{overpic}
		\begin{overpic}[width=.13\textwidth,grid=false]{./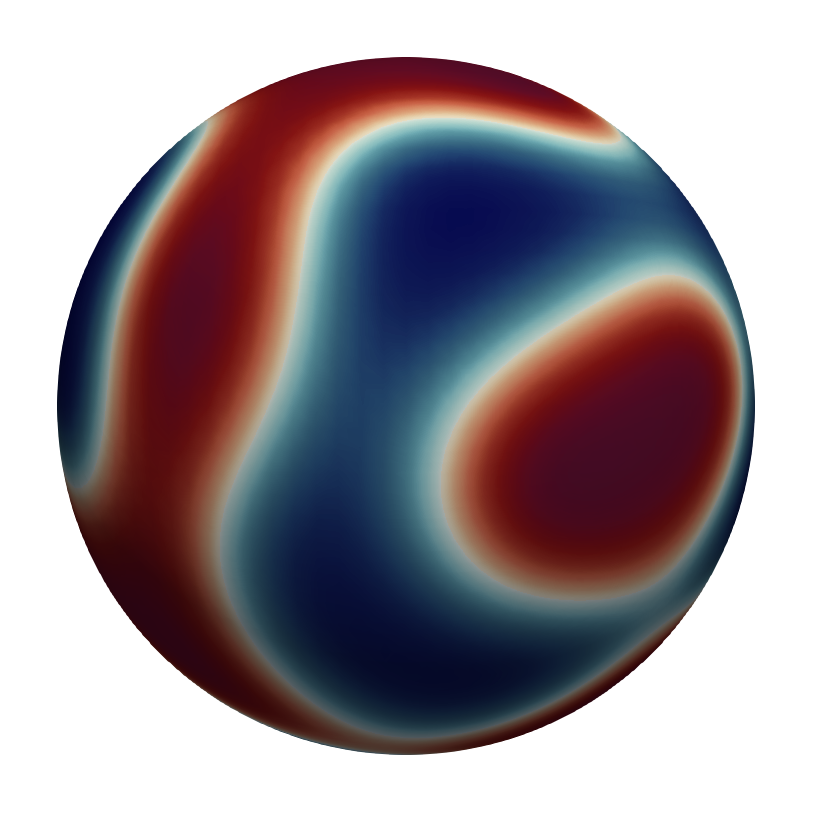}
			\put(30,102){\small{$t = 5$}}
		\end{overpic}
		\begin{overpic}[width=.13\textwidth,grid=false]{./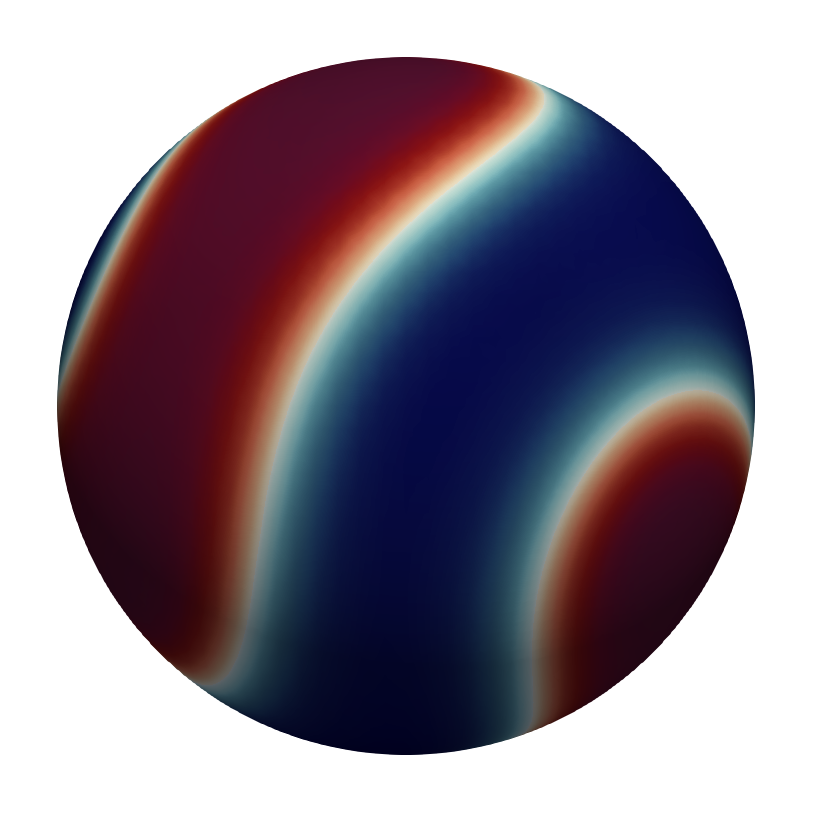}
			\put(30,102){\small{$t = 50$}}
		\end{overpic}\\
		\begin{overpic}[width=.13\textwidth,grid=false]{./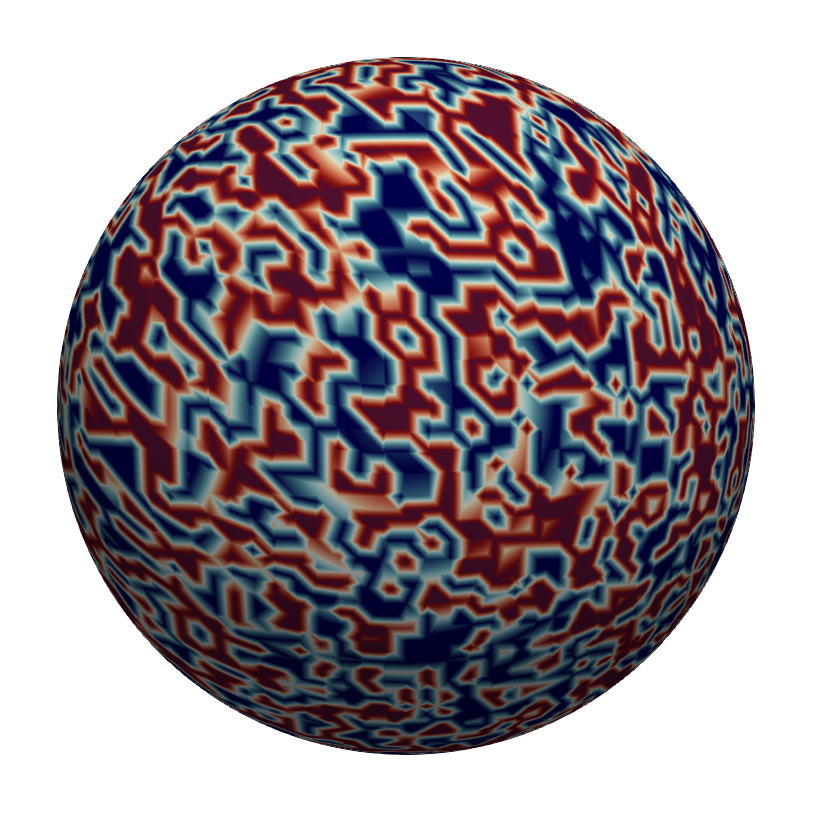}
			\put(-70,35){\small{BDF2}}
			\put(-70,55){\small{SAV}}
		\end{overpic}
		\includegraphics[width=0.13\columnwidth]{./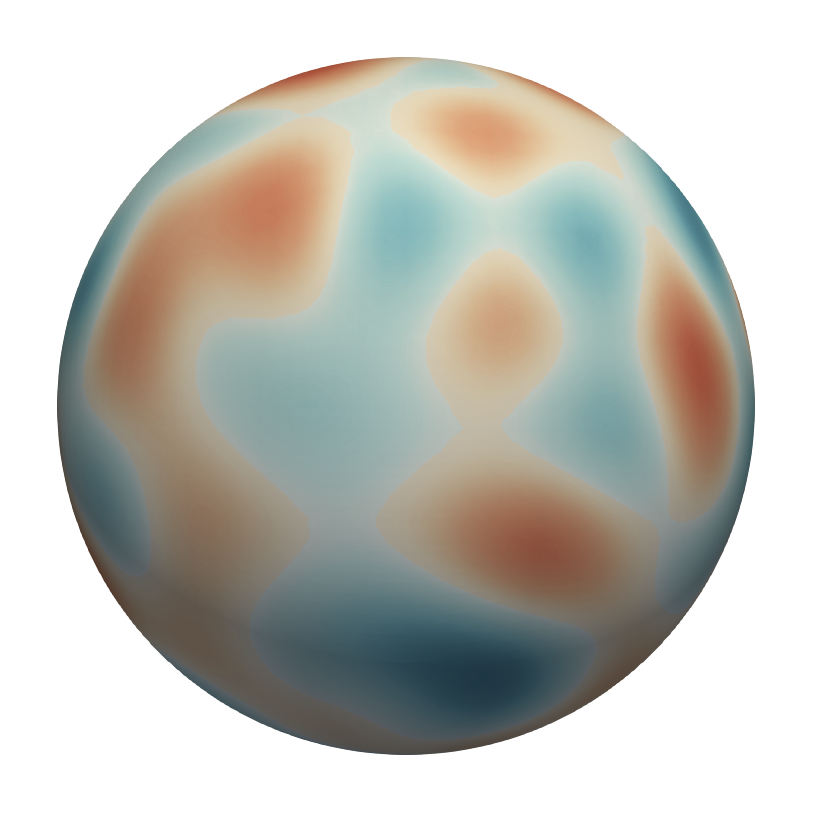}
		\includegraphics[width=0.13\columnwidth]{./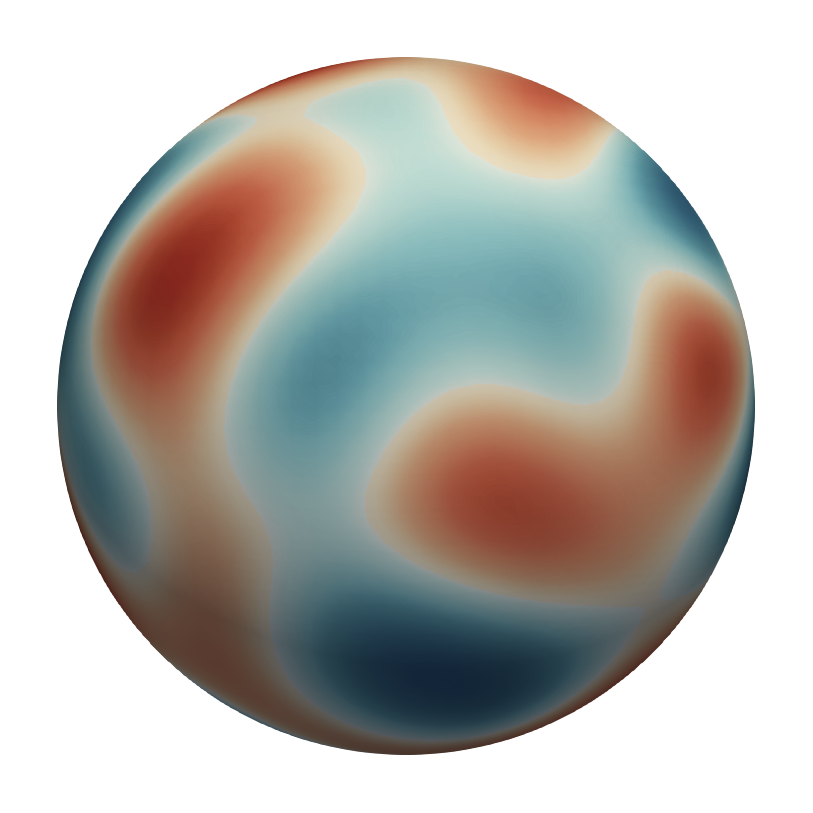}
		\includegraphics[width=0.13\columnwidth]{./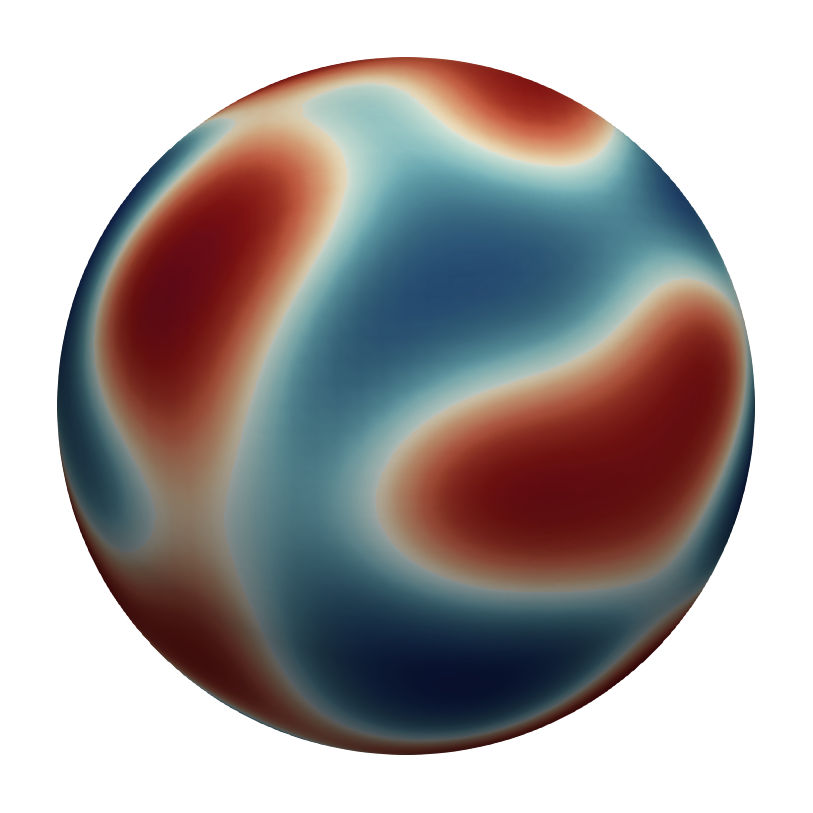}
		\includegraphics[width=0.13\columnwidth]{./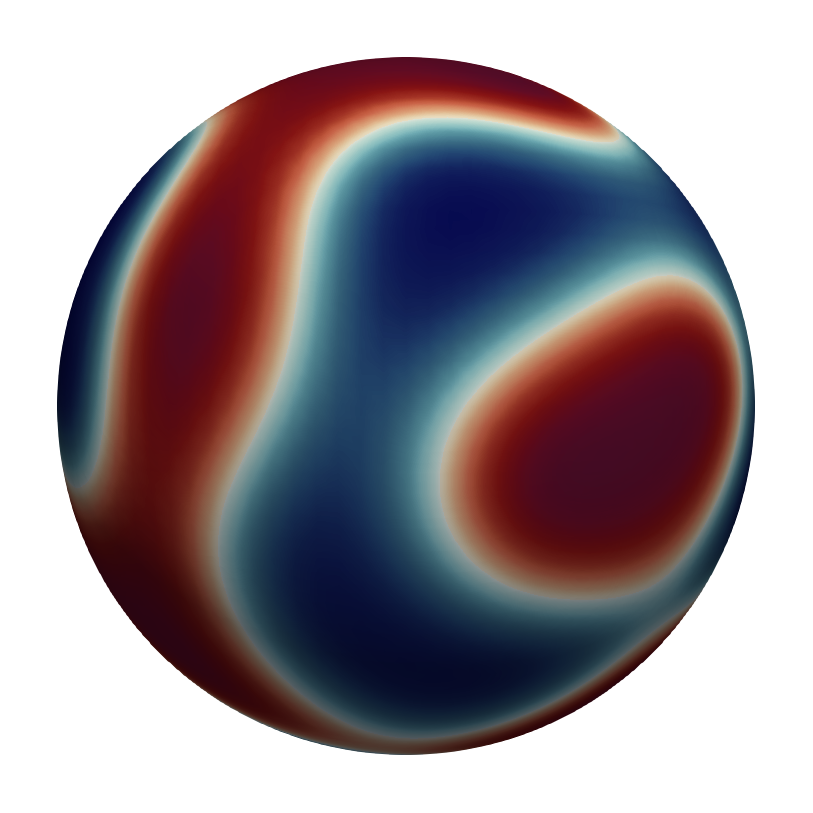}
		\includegraphics[width=0.13\columnwidth]{./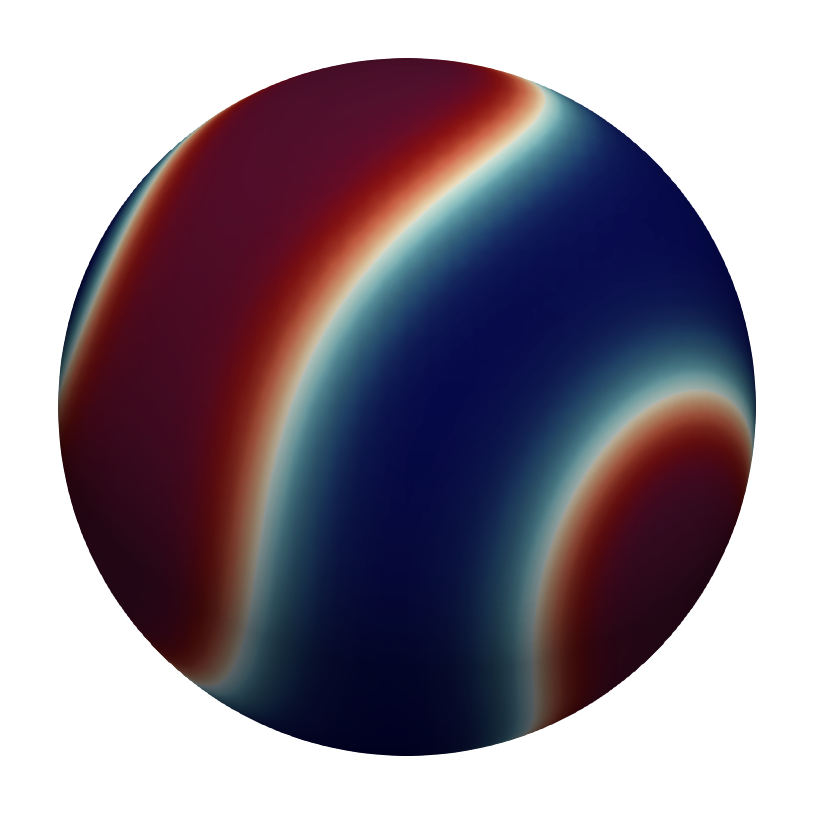}
		\\
		\includegraphics[width=0.4\columnwidth]{./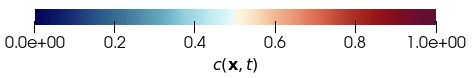}
		
	\end{center}
	\caption{Phase separation on the sphere, $a=0.5$: evolution of phases computed with the stabilized method in \cite{Yushutin_IJNMBE2019} (top) 
	and the SAV-BDF2 method without time step adaptivity (bottom).}
		\label{fig:a_50}
\end{figure}

Fig.~\ref{fig:a_30} and \ref{fig:a_70} display the evolution of phases for $a=0.3$ and $a=0.7$, respectively. 
Notice that there are opposite cases: 30\% of the sphere
surface is covered by the representative (red) phase for $a=0.3$, while
30\% of the sphere surface is covered by the opposite (blue) phase
for $a=0.7$.
If we were to use opposite initial conditions in these two cases, Fig.~\ref{fig:a_30} and \ref{fig:a_70} would
look identical just with inverted colors (red to blue and viceversa). However, the initial
conditions were generated randomly according to \eqref{raftIC} and so the evolution of the
red domains in Fig.~\ref{fig:a_30} looks similar (not identical) to the evolution of the 
blue domains in  Fig.~\ref{fig:a_70}. For both values of $a$ though, we see that 
again there is no observable difference in the solution computed with the 
stabilized method in \cite{Yushutin_IJNMBE2019} and the solution give by 
the SAV-BDF2 method without time step adaptivity.

\begin{figure}[htb!]
	\begin{center}
		\begin{overpic}[width=.13\textwidth,grid=false]{./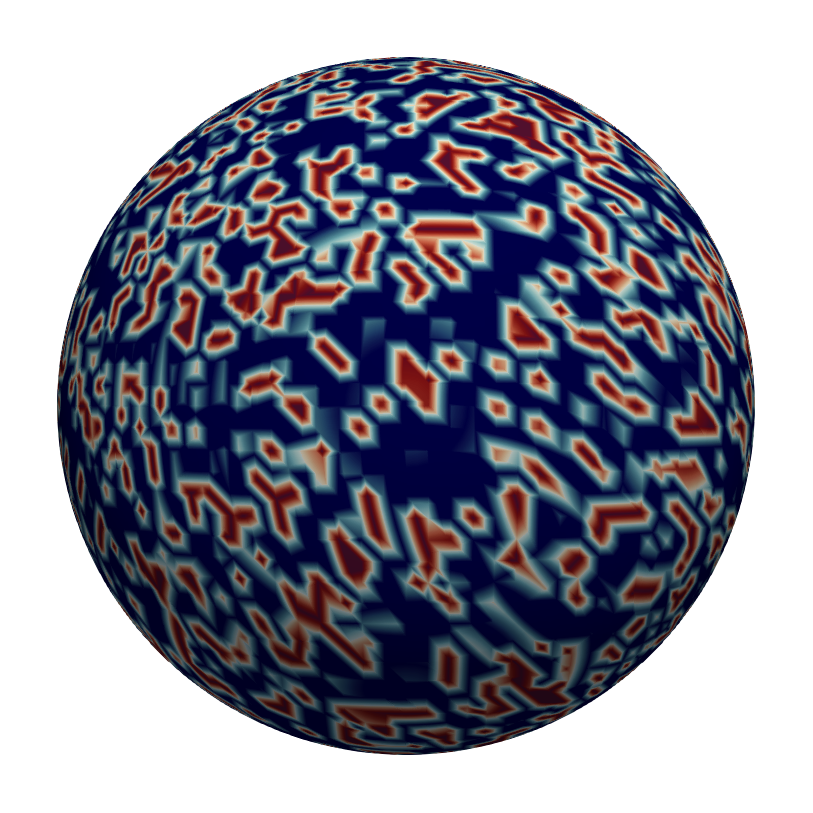}
			\put(30,102){\small{$t = 0$}}
			\put(-70,45){\small{Stabilized}}
		\end{overpic}
		\begin{overpic}[width=.13\textwidth,grid=false]{./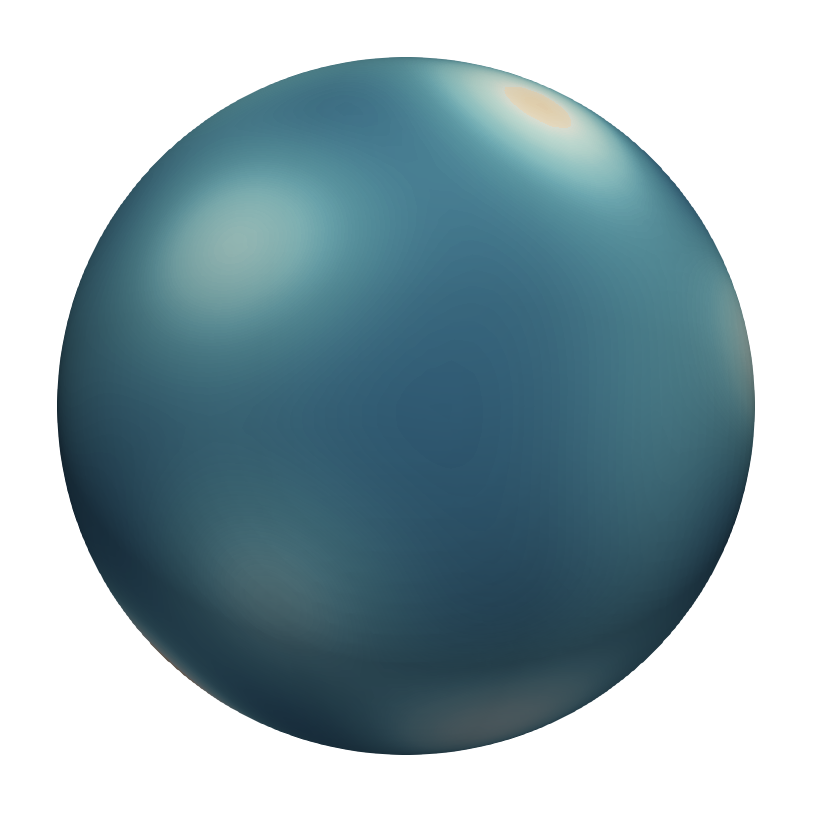}
			\put(30,102){\small{$t = 1$}}
		\end{overpic}
		\begin{overpic}[width=.13\textwidth,grid=false]{./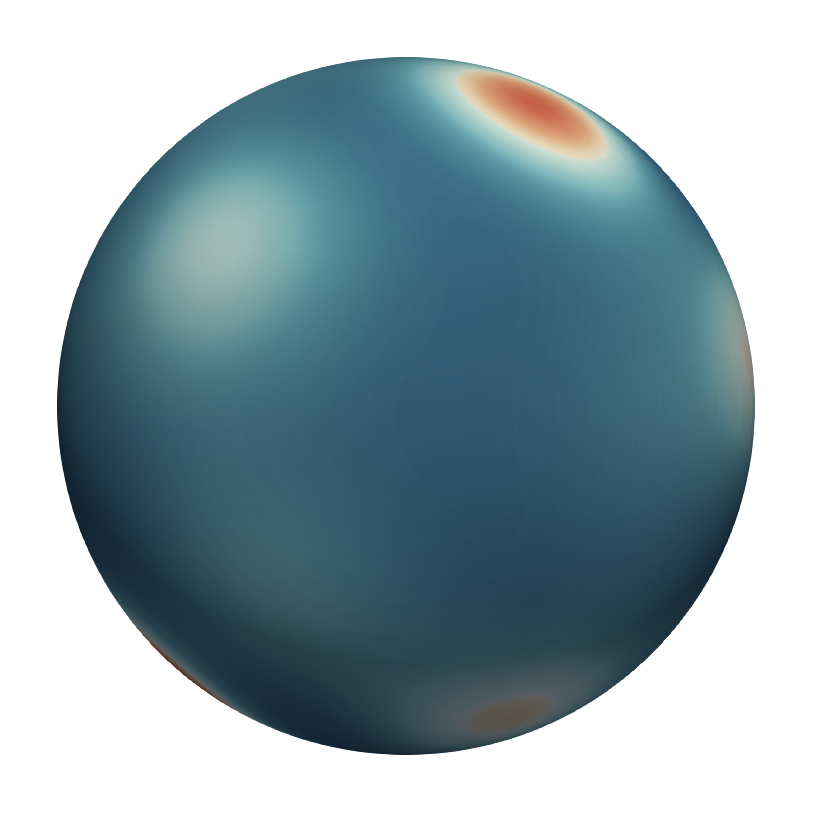}
			\put(30,102){\small{$t = 2$}}
		\end{overpic}
		\begin{overpic}[width=.13\textwidth,grid=false]{./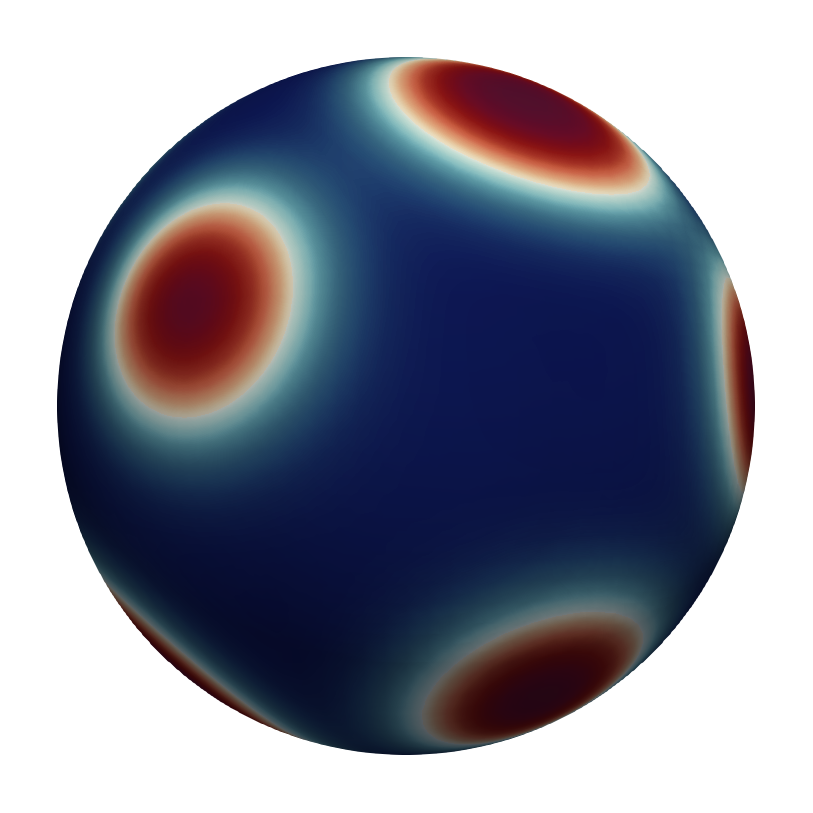}
			\put(30,102){\small{$t = 10$}}
		\end{overpic}
		\begin{overpic}[width=.13\textwidth,grid=false]{./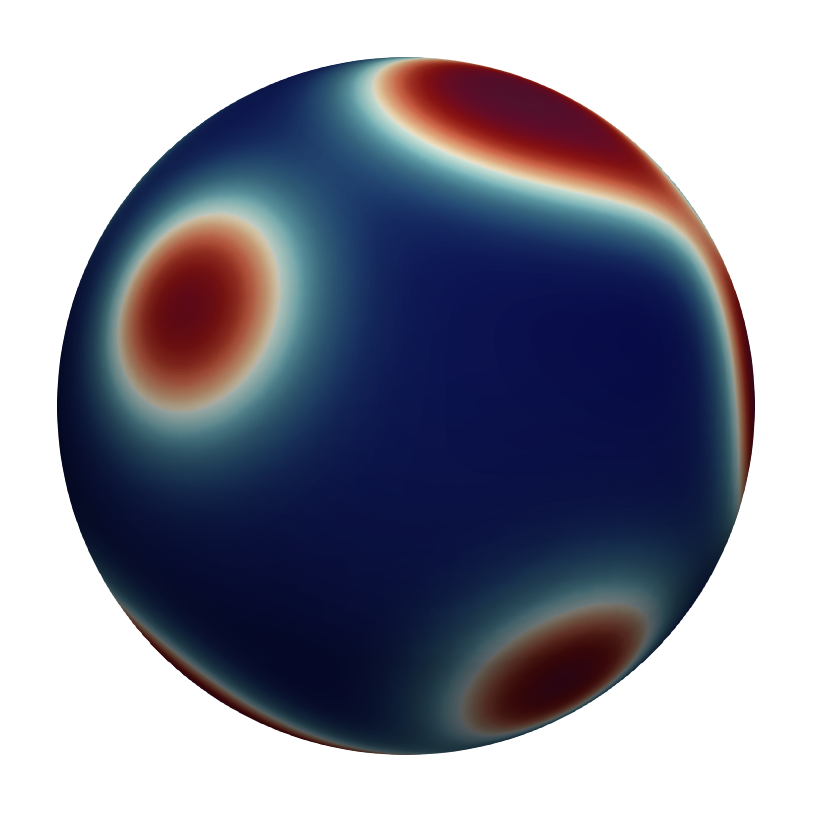}
			\put(30,102){\small{$t = 20$}}
		\end{overpic}
		\begin{overpic}[width=.13\textwidth,grid=false]{./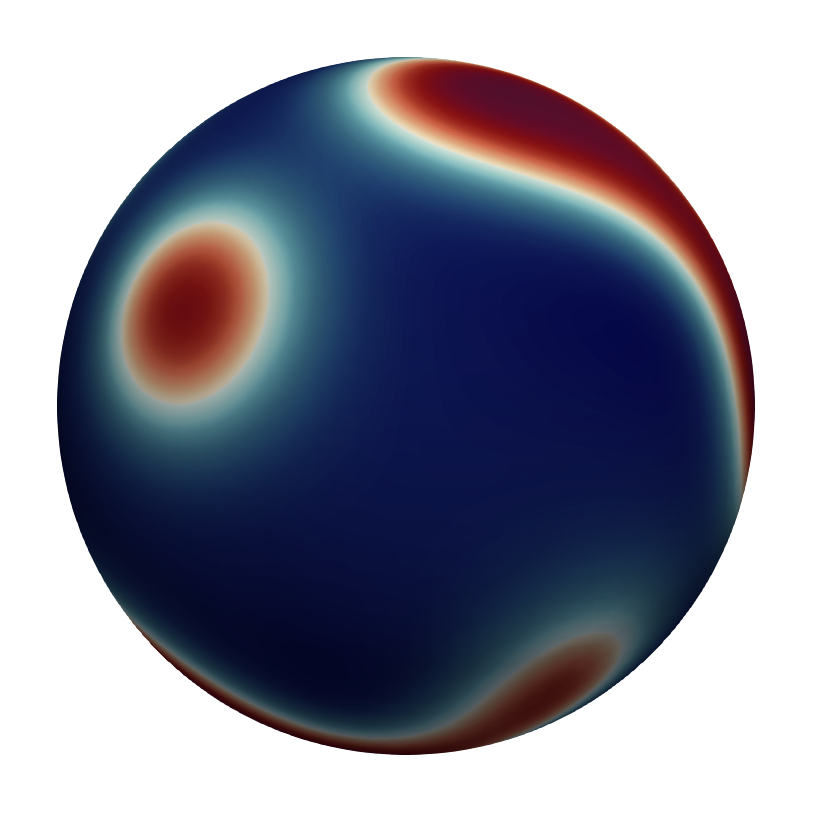}
			\put(30,102){\small{$t = 25$}}
		\end{overpic}\\
		\begin{overpic}[width=.13\textwidth,grid=false]{./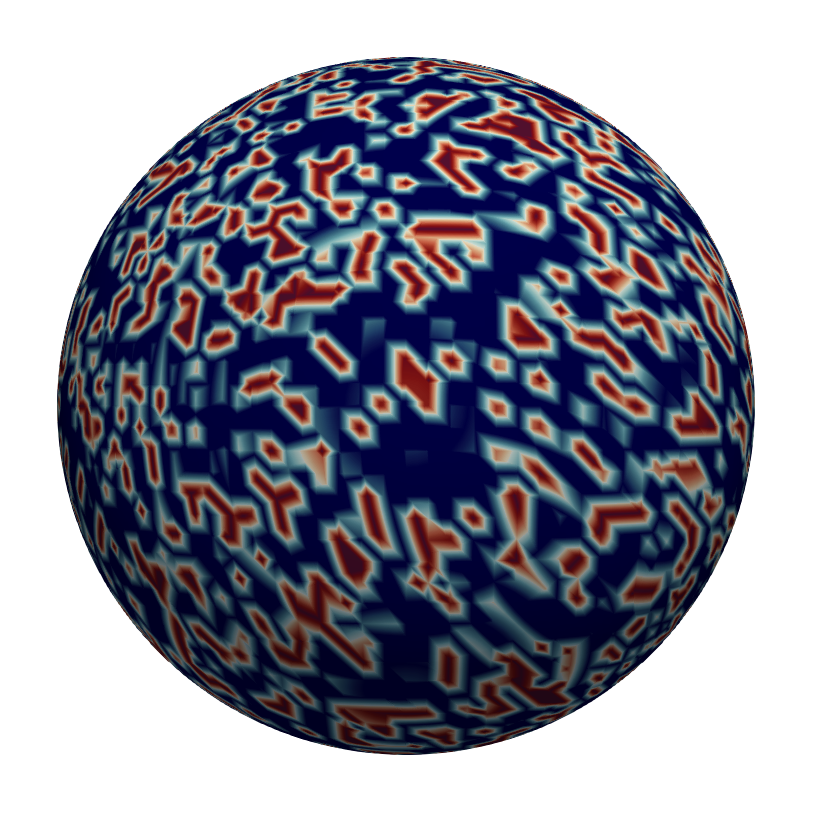}
			\put(-70,35){\small{BDF2}}
			\put(-70,55){\small{SAV}}
		\end{overpic}
		\includegraphics[width=0.13\columnwidth]{./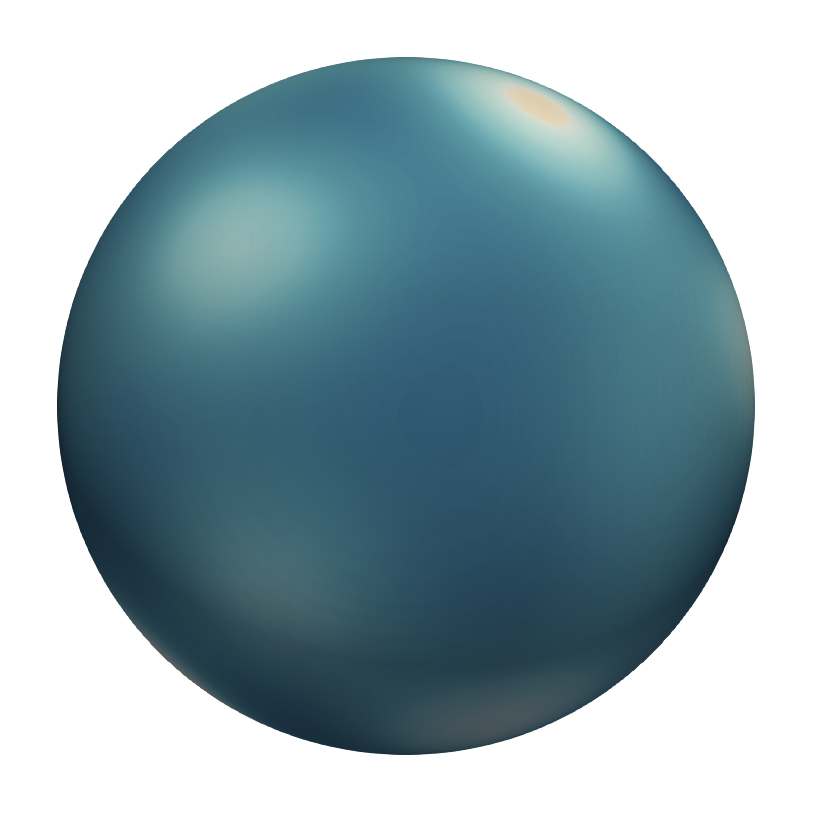}
		\includegraphics[width=0.13\columnwidth]{./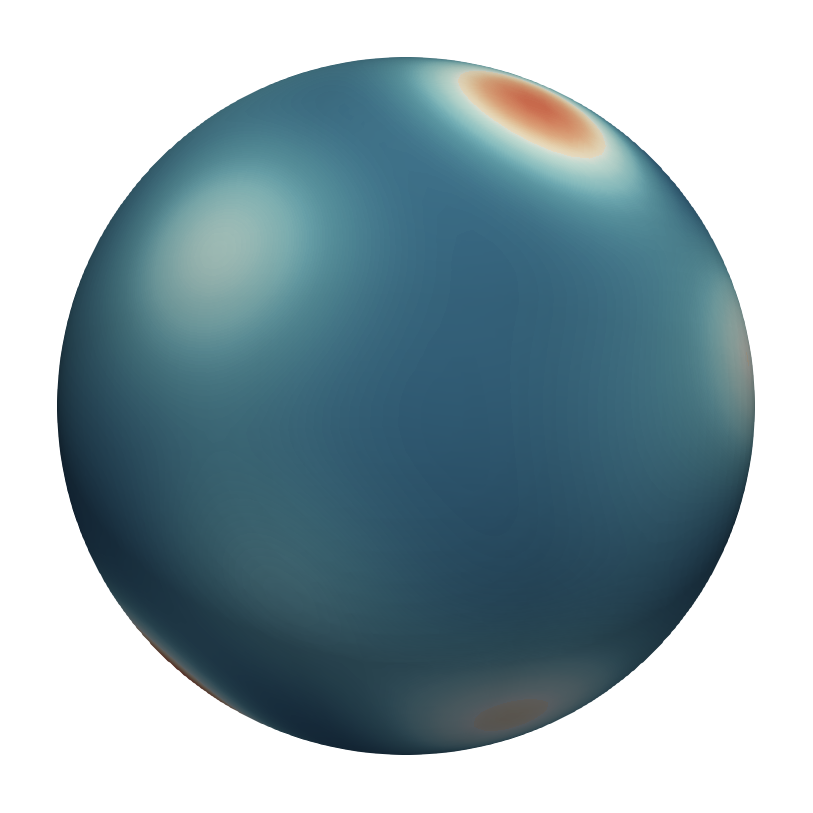}
		\includegraphics[width=0.13\columnwidth]{./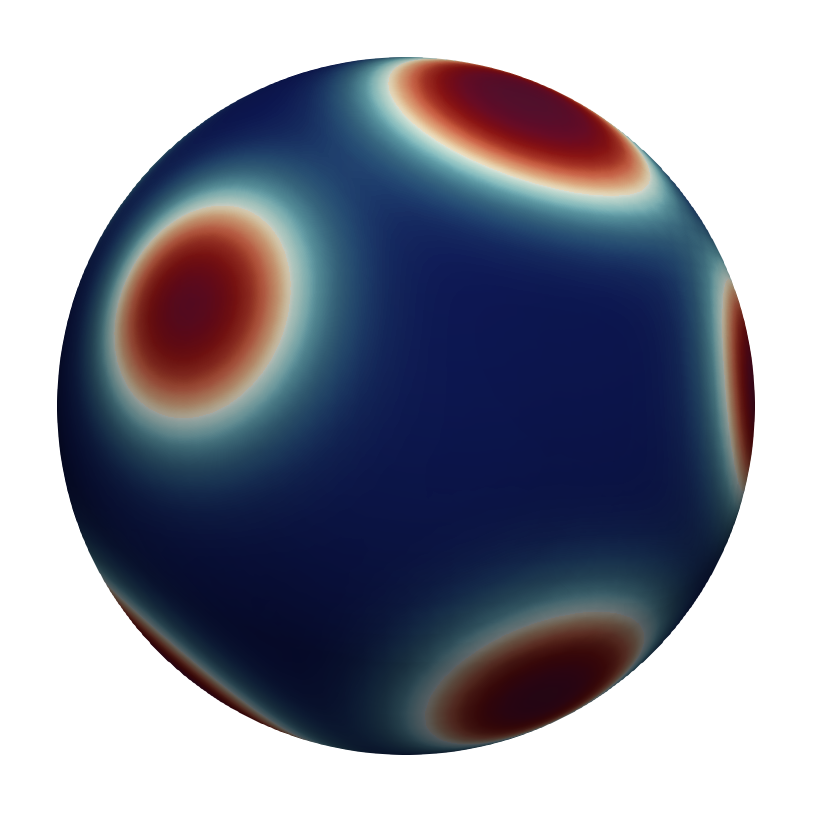}
		\includegraphics[width=0.13\columnwidth]{./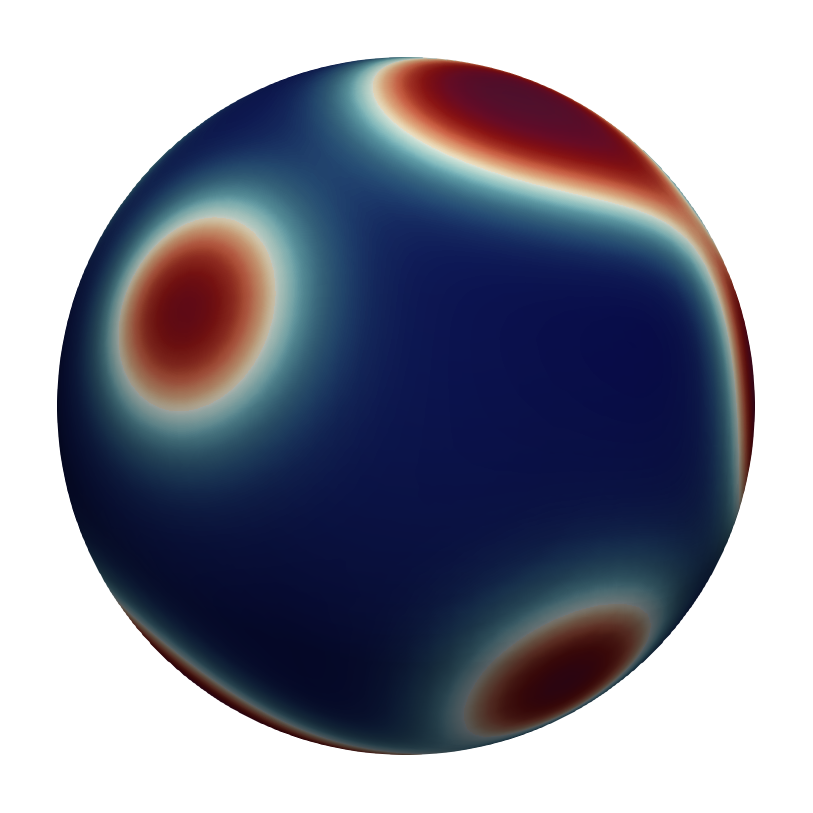}
		\includegraphics[width=0.13\columnwidth]{./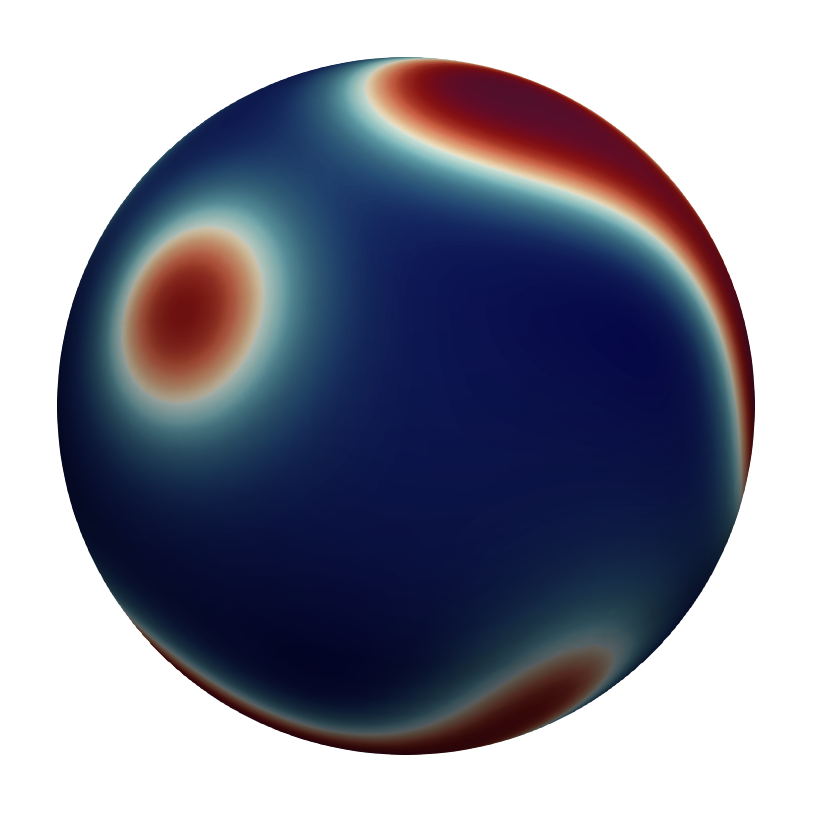}
		\\
		\includegraphics[width=0.4\columnwidth]{./img/colorbar.png}
		
	\end{center}
	\caption{Phase separation on the sphere, $a=0.3$: evolution of phases computed with the stabilized method in \cite{Yushutin_IJNMBE2019} (top) 
	and the SAV-BDF2 method without time step adaptivity (bottom).}
		\label{fig:a_30}
\end{figure}

\begin{figure}[htb!]
	\begin{center}
		\begin{overpic}[width=.13\textwidth,grid=false]{./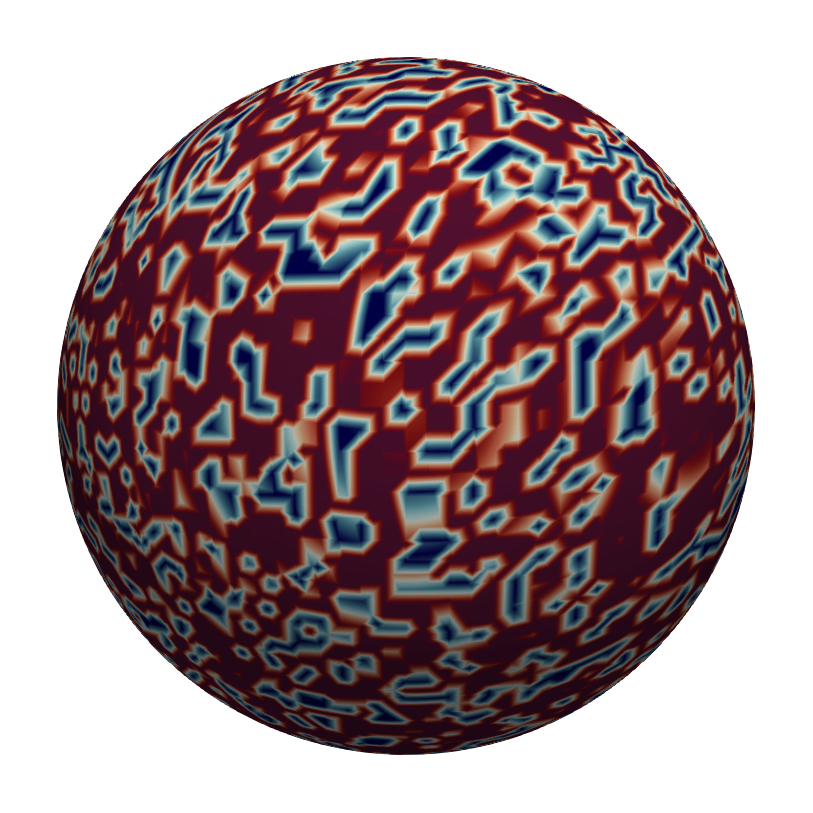}
			\put(30,102){\small{$t = 0$}}
			\put(-70,45){\small{Stabilized}}
		\end{overpic}
		\begin{overpic}[width=.13\textwidth,grid=false]{./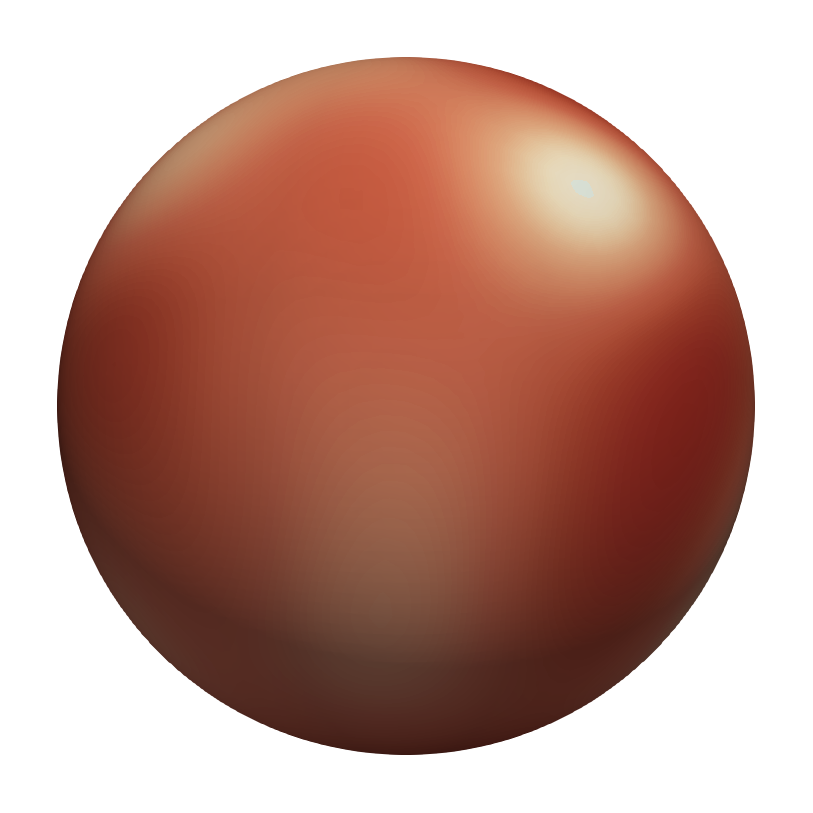}
			\put(30,102){\small{$t = 1$}}
		\end{overpic}
		\begin{overpic}[width=.13\textwidth,grid=false]{./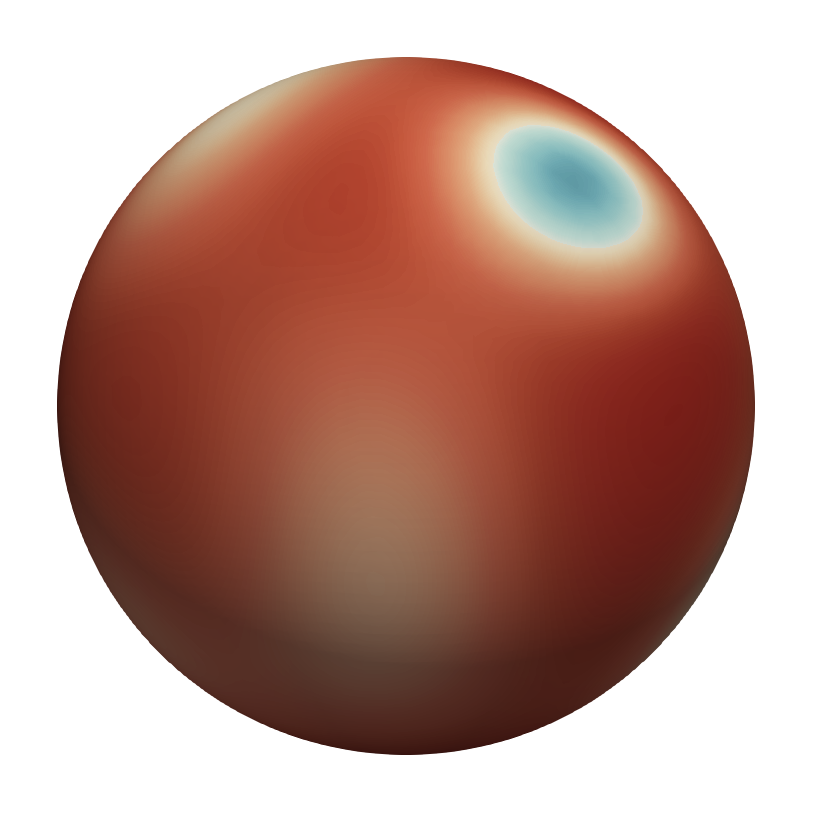}
			\put(30,102){\small{$t = 2$}}
		\end{overpic}
		\begin{overpic}[width=.13\textwidth,grid=false]{./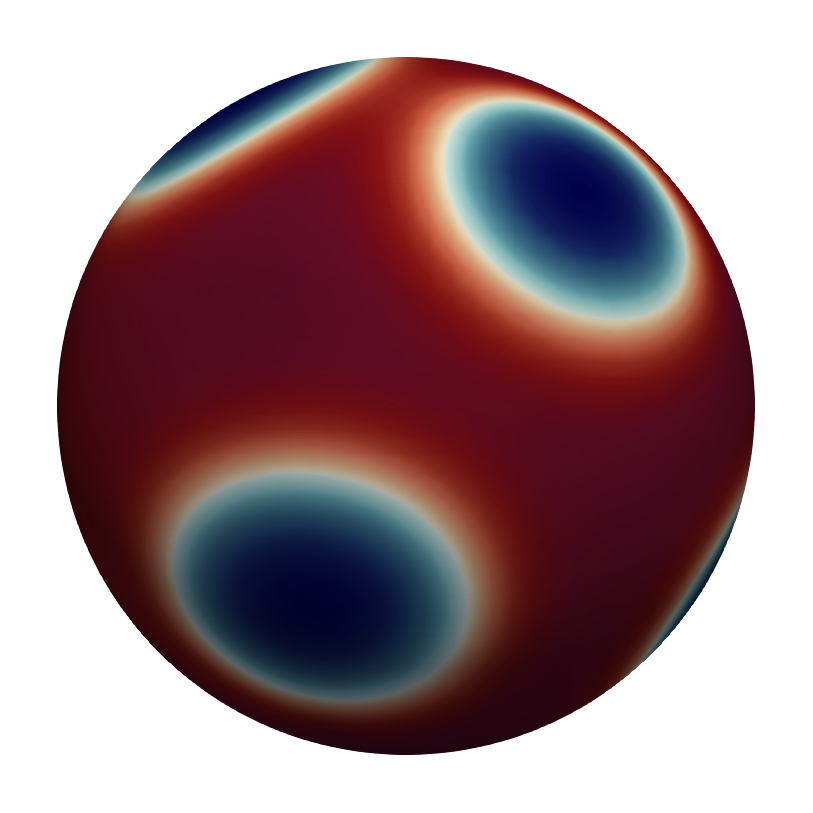}
			\put(30,102){\small{$t = 10$}}
		\end{overpic}
		\begin{overpic}[width=.13\textwidth,grid=false]{./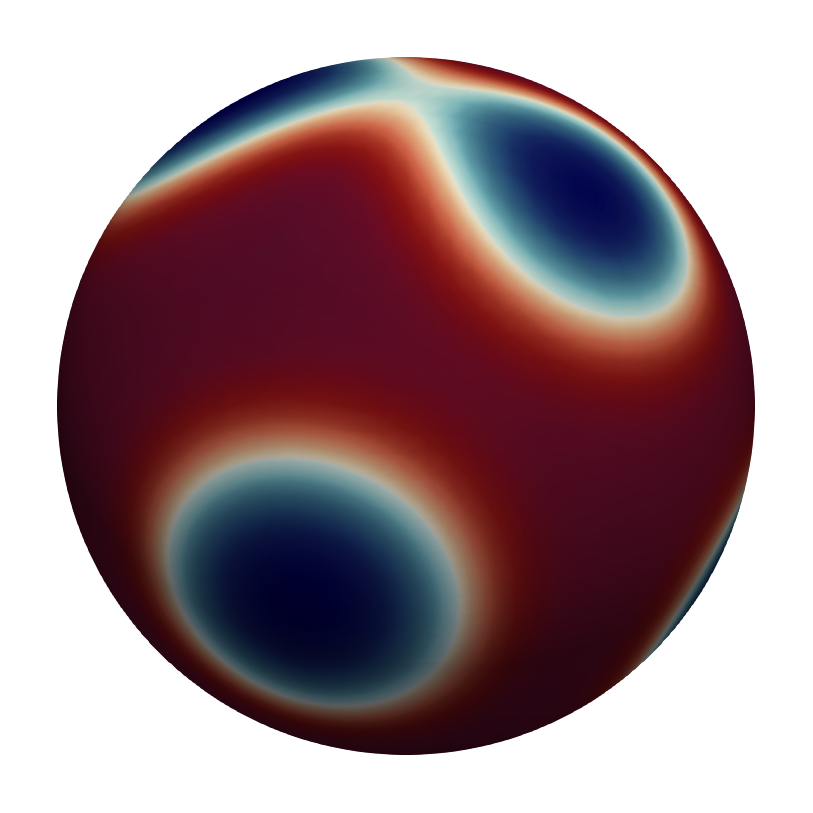}
			\put(30,102){\small{$t = 20$}}
		\end{overpic}
		\begin{overpic}[width=.13\textwidth,grid=false]{./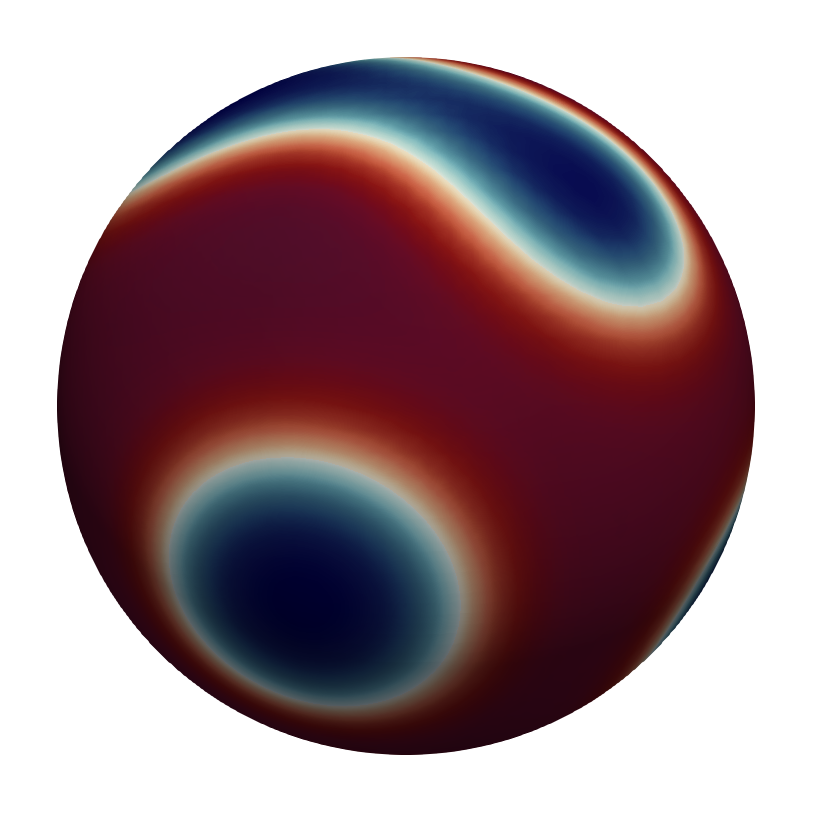}
			\put(30,102){\small{$t = 25$}}
		\end{overpic}\\
		\begin{overpic}[width=.13\textwidth,grid=false]{./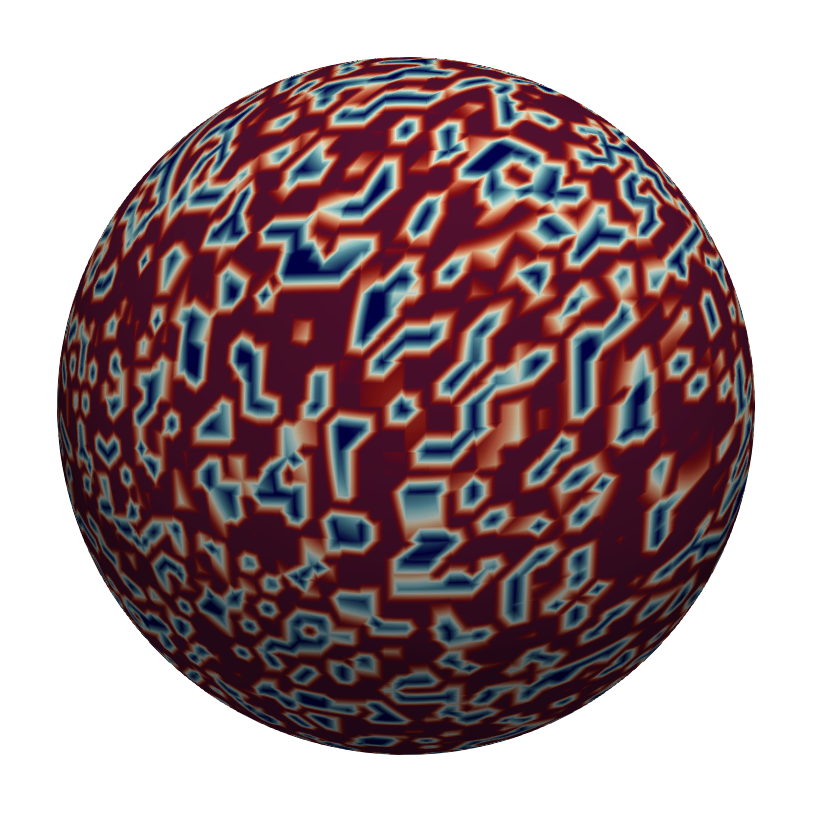}
			\put(-70,35){\small{BDF2}}
			\put(-70,55){\small{SAV}}
		\end{overpic}
		\includegraphics[width=0.13\columnwidth]{./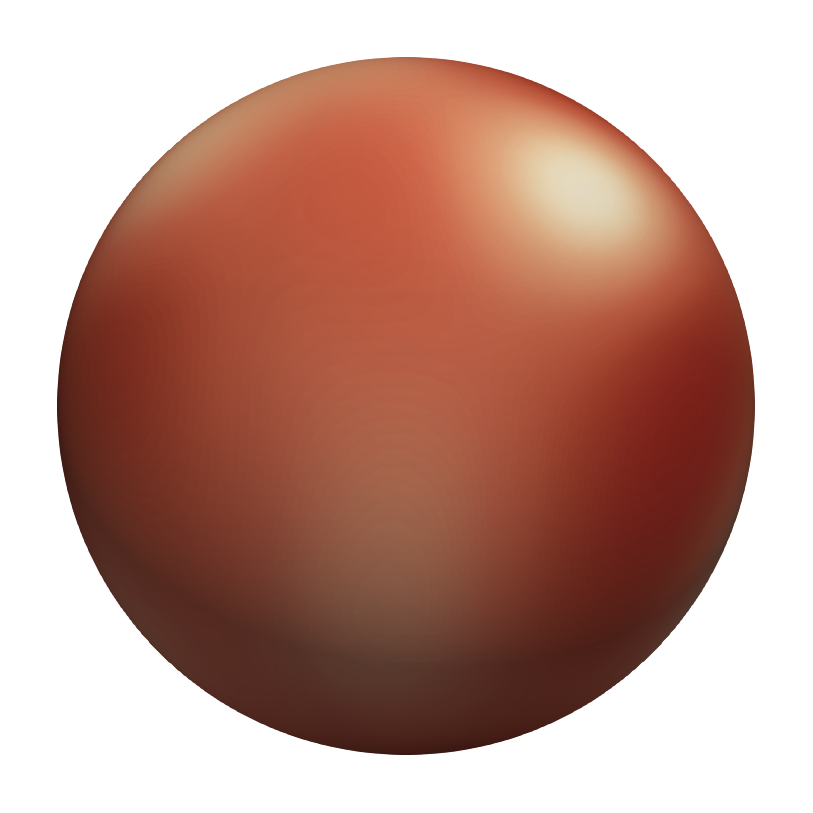}
		\includegraphics[width=0.13\columnwidth]{./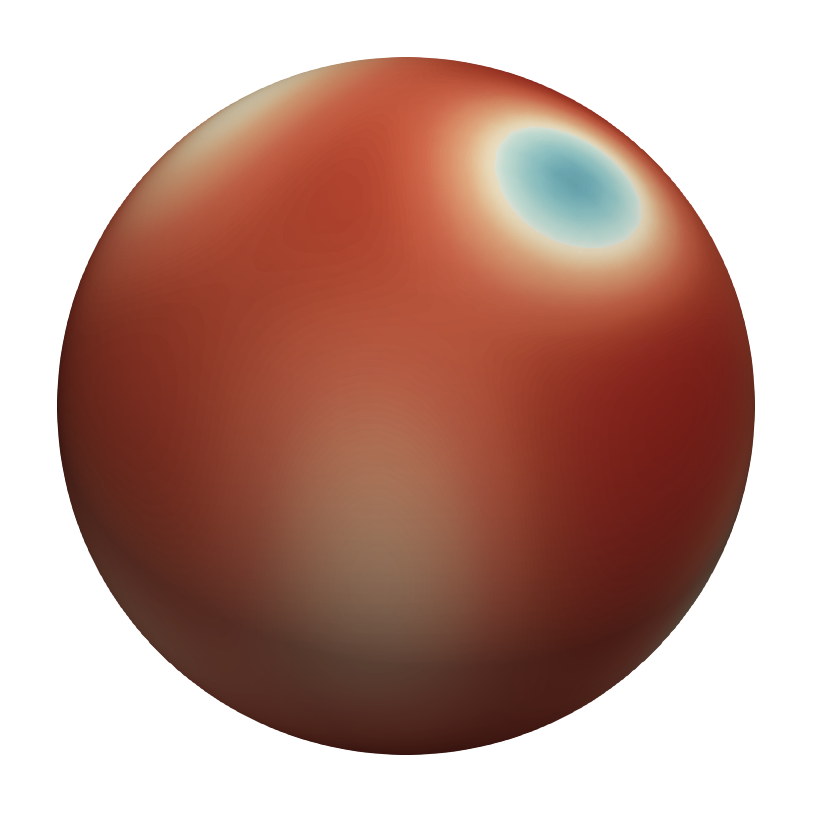}
		\includegraphics[width=0.13\columnwidth]{./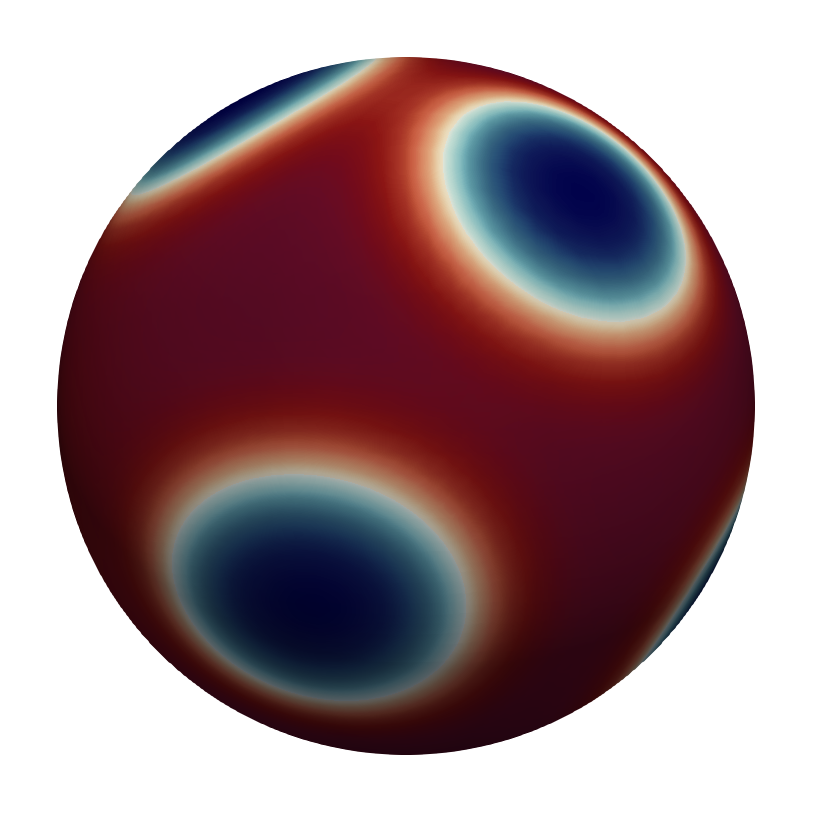}
		\includegraphics[width=0.13\columnwidth]{./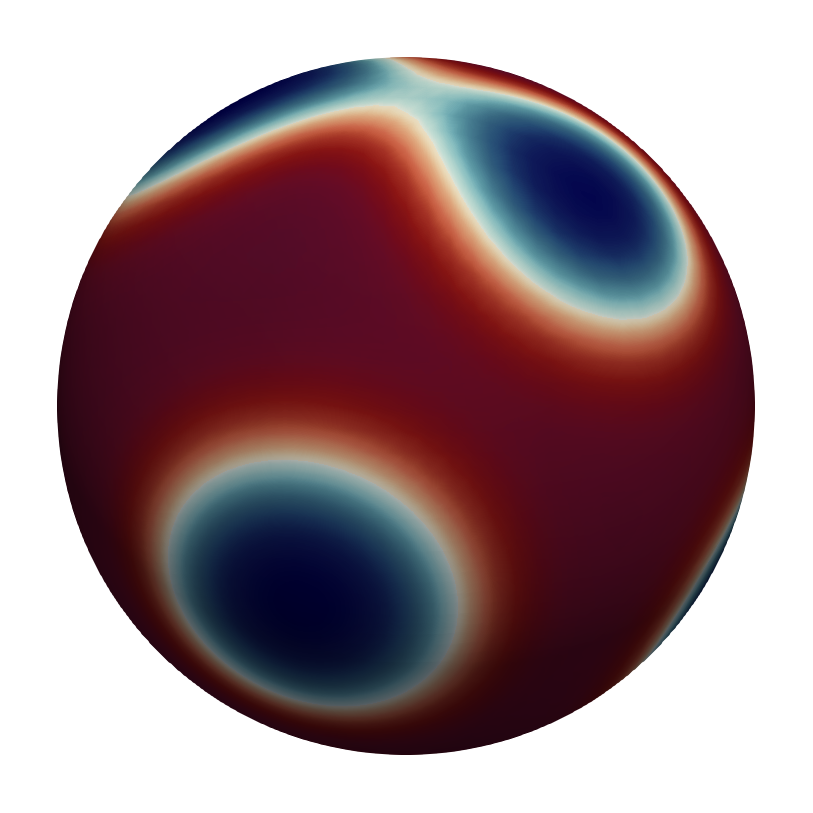}
		\includegraphics[width=0.13\columnwidth]{./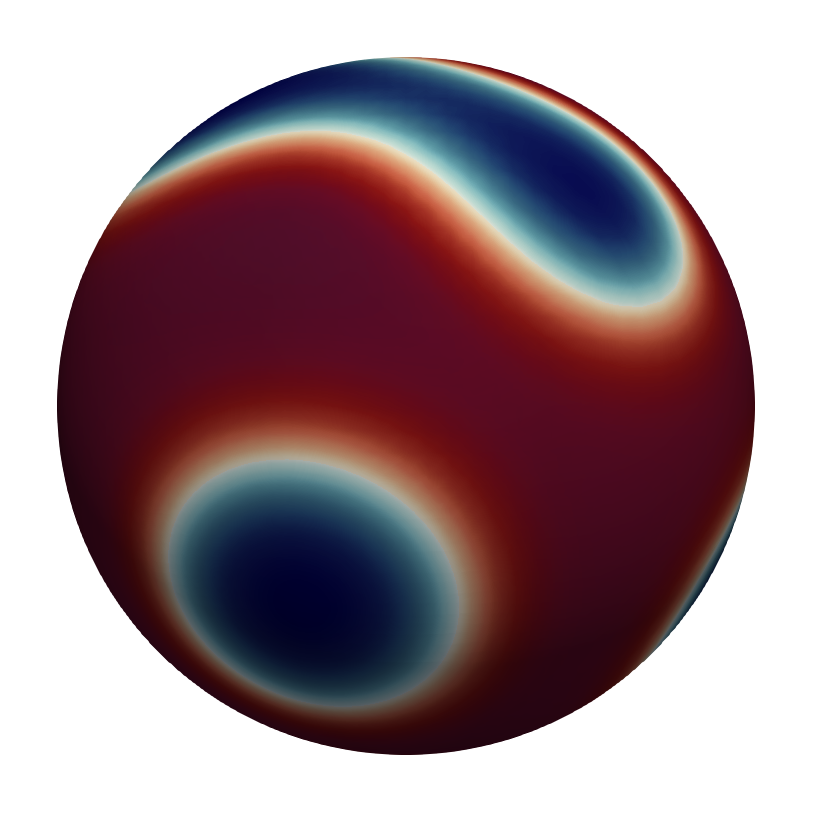}
		\\
		\includegraphics[width=0.4\columnwidth]{./img/colorbar.png}
		
	\end{center}
	\caption{Phase separation  on the sphere, $a=0.7$: evolution of phases computed with the stabilized method in \cite{Yushutin_IJNMBE2019} (top) 
	and the SAV-BDF2 method without time step adaptivity (bottom).}
		\label{fig:a_70}
\end{figure}

Fig.~\ref{fig:mod_e_three_a} displays the decay of modified energy \eqref{eq:mod_e_BDF2}
for the three values of $a$. We see that the decay is more or less rapid depending on the value of $a$.
However, in no case at $t = 25$ the system is close to an energy plateau, which we observed already at $t = 1$ for the simple 
convergence test in Sec~\ref{sec:conv_test}. See the graphs in Fig.~\ref{fig:mod_e}.

\begin{figure}[htb!]
	\centering
	\begin{overpic}[width=.32\textwidth,grid=false]{./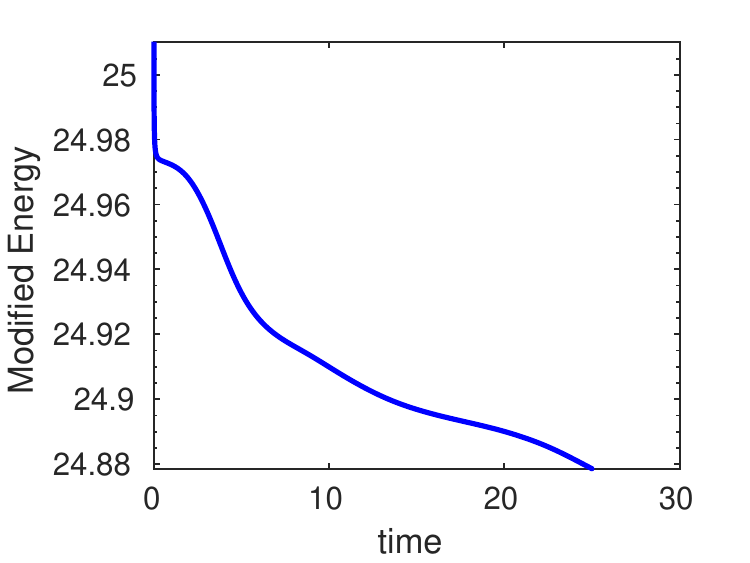}
		\put(45,72){\small{$a = 0.3$}}
	\end{overpic}
	\begin{overpic}[width=.32\textwidth,grid=false]{./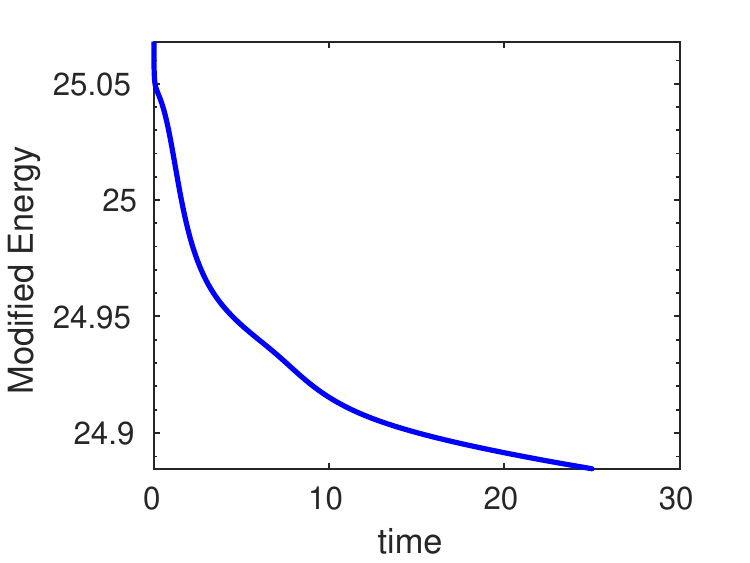}
		\put(45,72){\small{$a = 0.5$}}
	\end{overpic}
	\begin{overpic}[width=.32\textwidth,grid=false]{./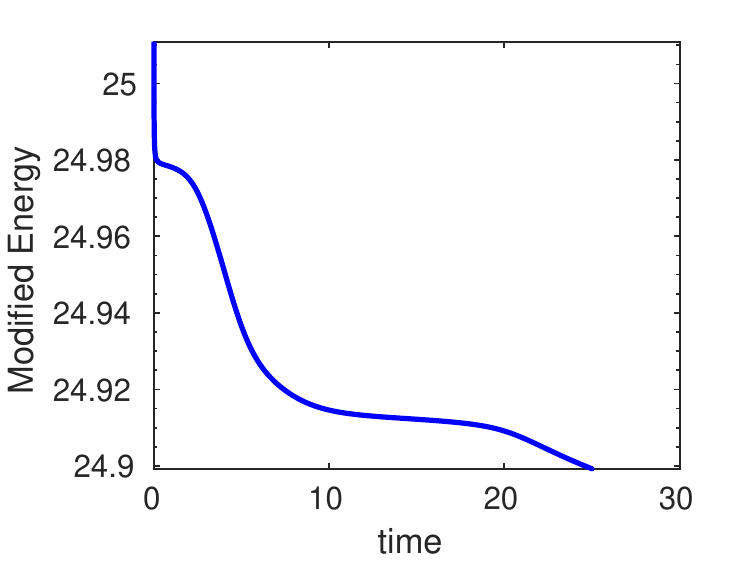}
	\put(45,72){\small{$a = 0.7$}}
	\end{overpic}	
	\caption{Phase separation  on the sphere: decay of modified energy  \eqref{eq:mod_e_BDF2} for $a = 0.3$ (left), 
	$a = 0.5$ (center), and $a = 0.7$ (right).}\label{fig:mod_e_three_a}
\end{figure}

Next, we compare the results obtained with the time-adaptive SAV-BDF2 method to those obtained with the stabilized method in \cite{Yushutin_IJNMBE2019} in its time adaptive version. For this comparison, we select only one representative value of $a$, namely $a=0.5$. In Figure~\ref{fig:a_50_ta}, which illustrates the evolution of phases until reaching the equilibrium configuration, we once again observe no difference in either the spinodal decomposition or the domain ripening between the two methods.

\begin{figure}[htb!]
	\begin{center}
		\begin{overpic}[width=.13\textwidth,grid=false]{./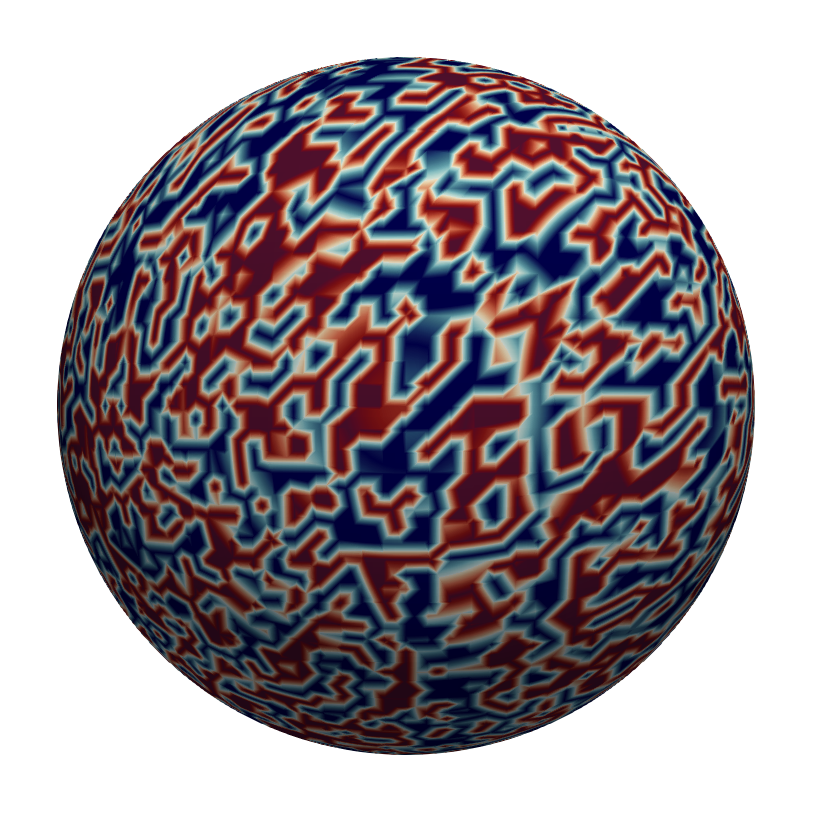}
			\put(30,102){\small{$t = 0$}}
					\put(-70,67){\small{Time}}
		\put(-70,47){\small{adaptive}}
		\put(-70,27){\small{stabilized}}
		\end{overpic}
		\begin{overpic}[width=.13\textwidth,grid=false]{./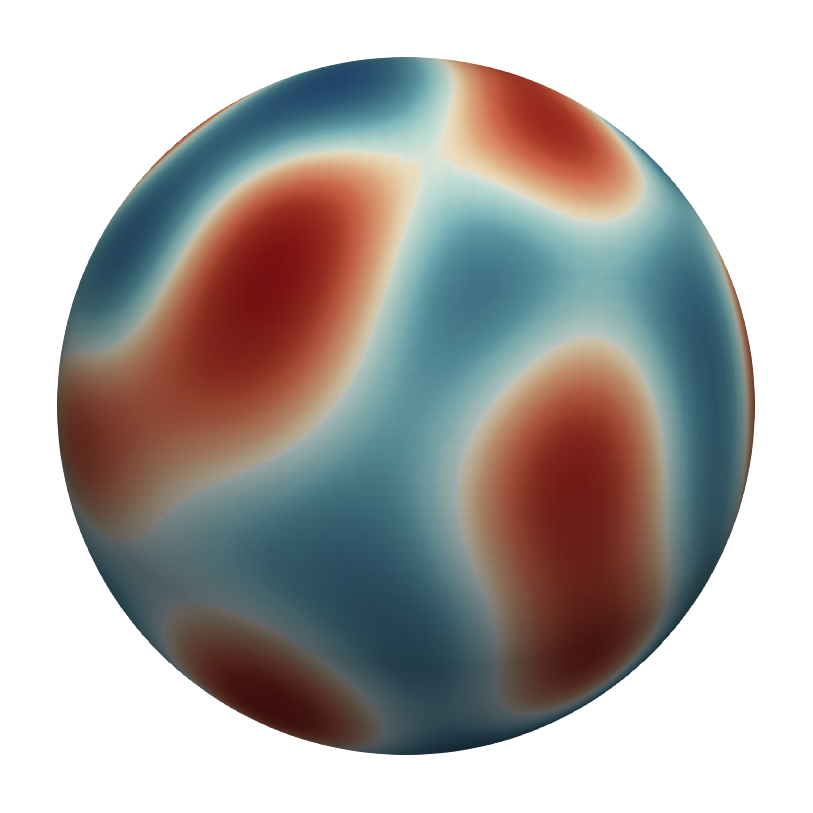}
			\put(30,102){\small{$t = 1$}}
		\end{overpic}
		\begin{overpic}[width=.13\textwidth,grid=false]{./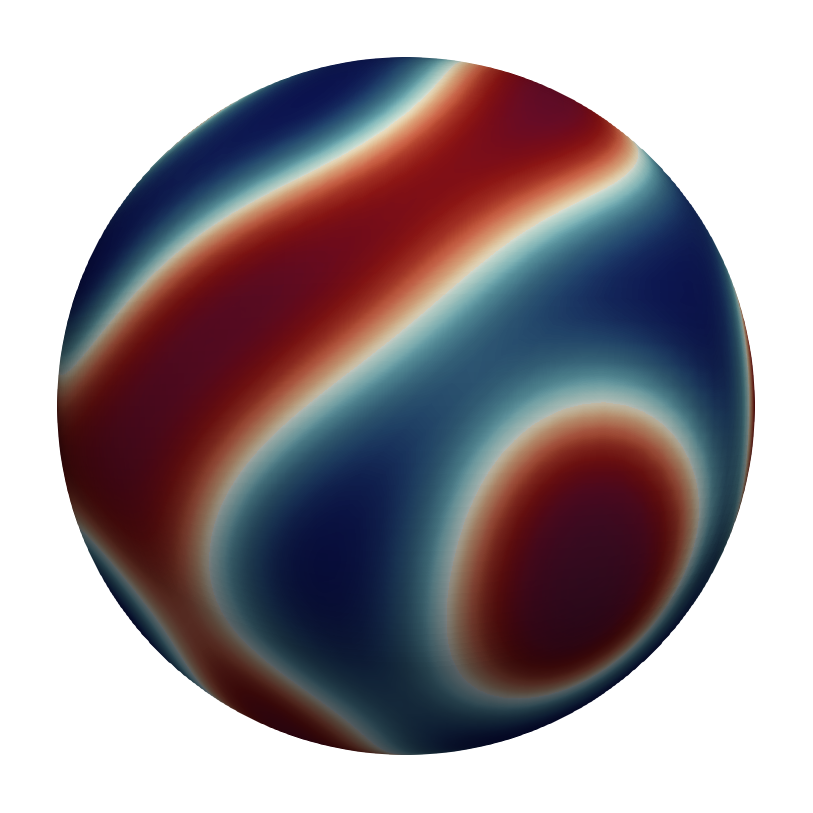}
			\put(30,102){\small{$t = 5$}}
		\end{overpic}
		\begin{overpic}[width=.13\textwidth,grid=false]{./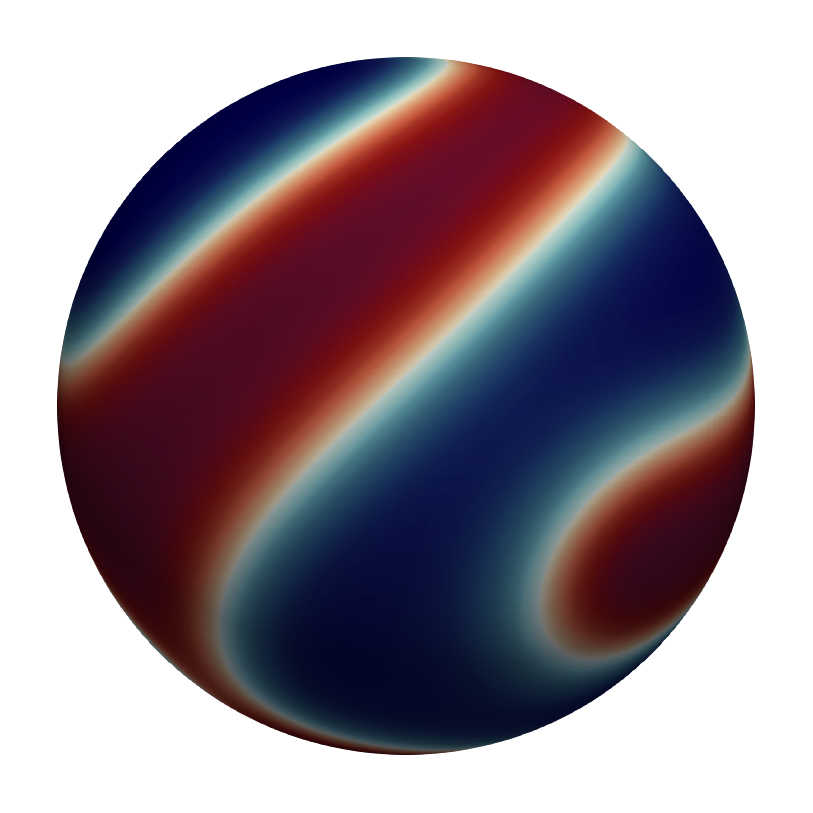}
			\put(30,102){\small{$t = 20$}}
		\end{overpic}
		\begin{overpic}[width=.13\textwidth,grid=false]{./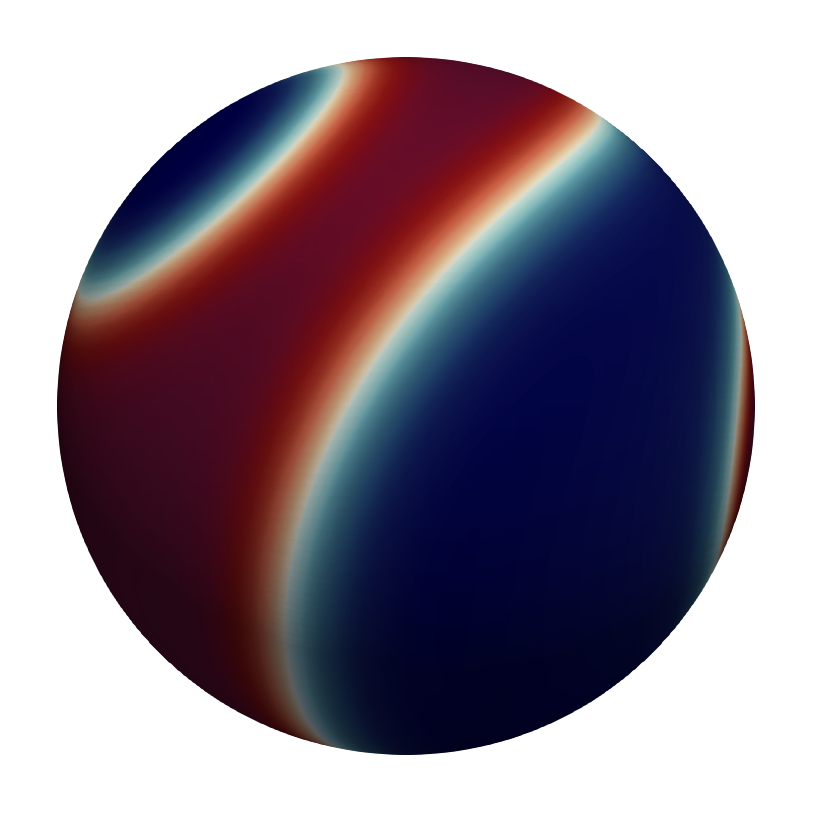}
		\put(30,102){\small{$t = 100$}}
		\end{overpic}
		\begin{overpic}[width=.13\textwidth,grid=false]{./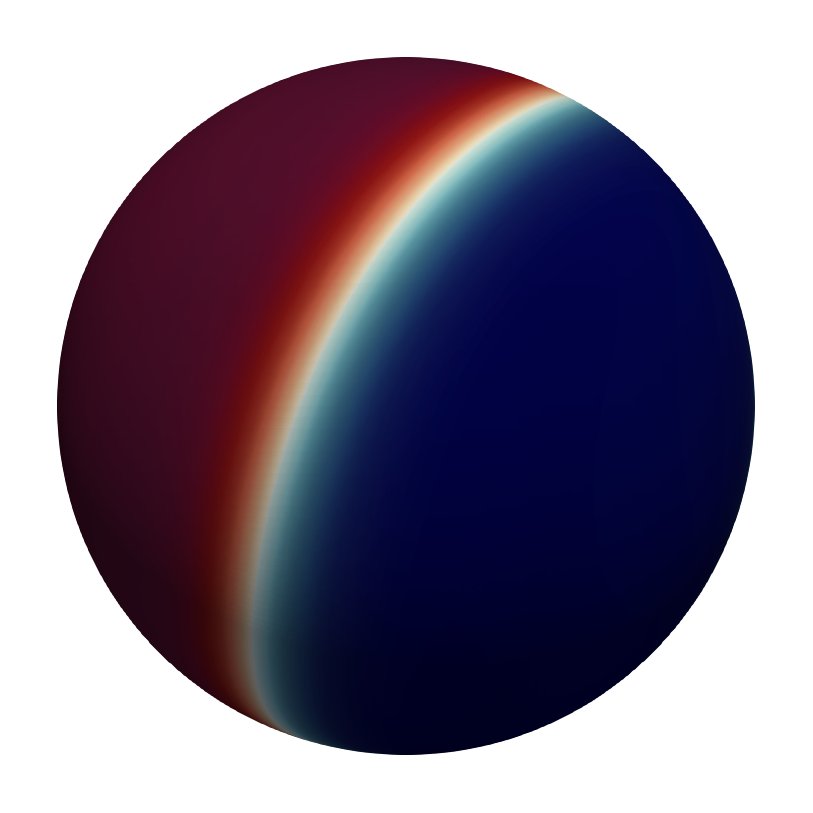}
		\put(30,102){\small{$t = 200$}}
		\end{overpic}\\
		\begin{overpic}[width=.13\textwidth,grid=false]{./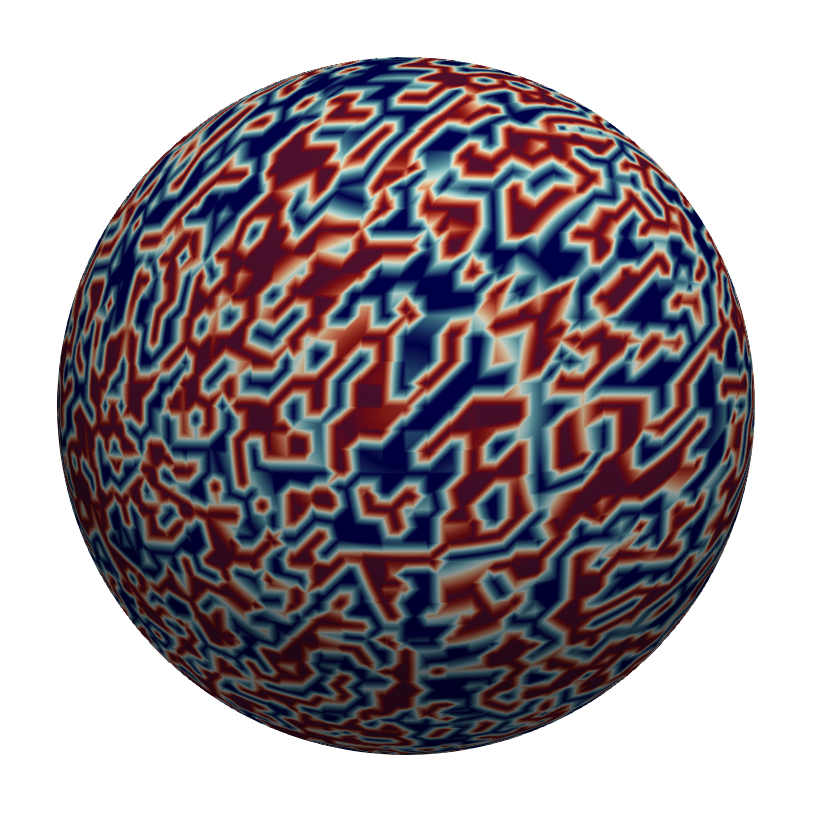}
		\put(-70,67){\small{Time}}
		\put(-70,47){\small{adaptive}}
		\put(-70,27){\small{SAV}}
		\end{overpic}
		\includegraphics[width=0.13\columnwidth]{./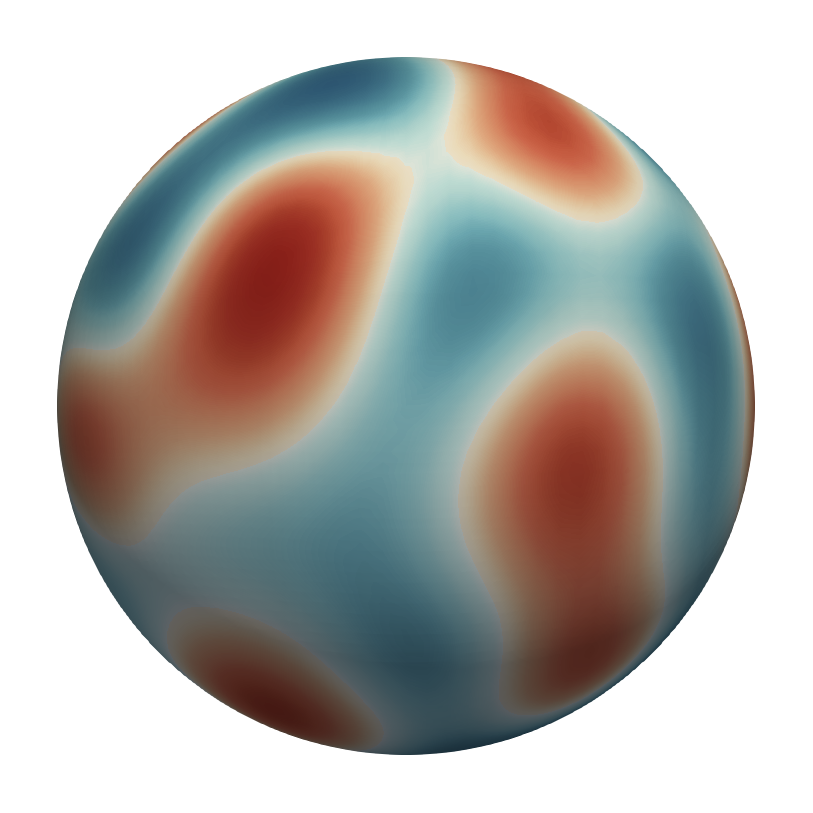}
		\includegraphics[width=0.13\columnwidth]{./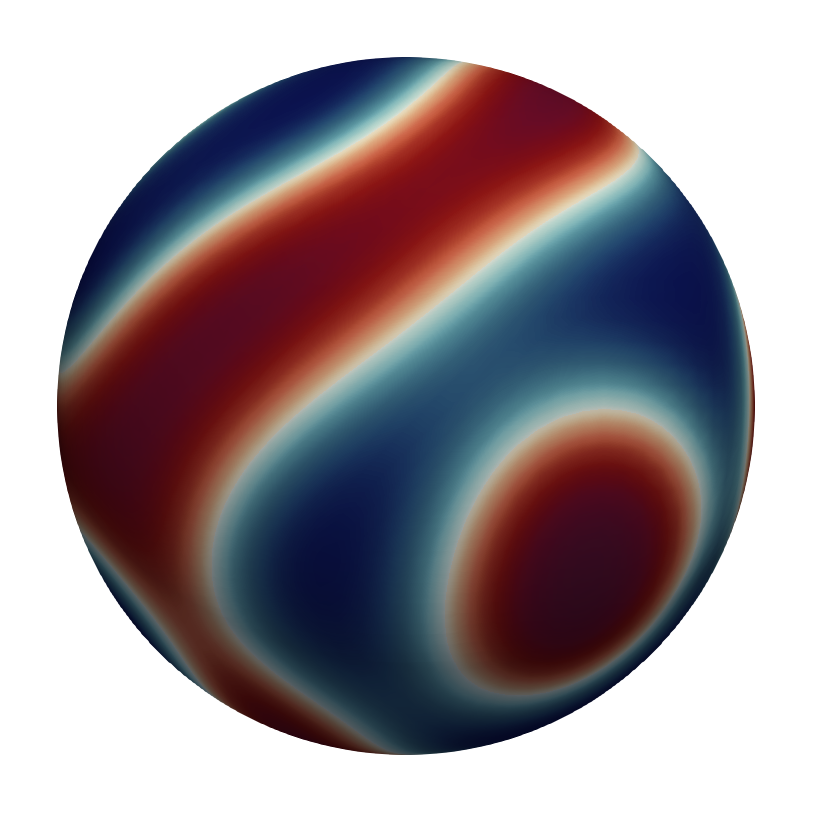}
		\includegraphics[width=0.13\columnwidth]{./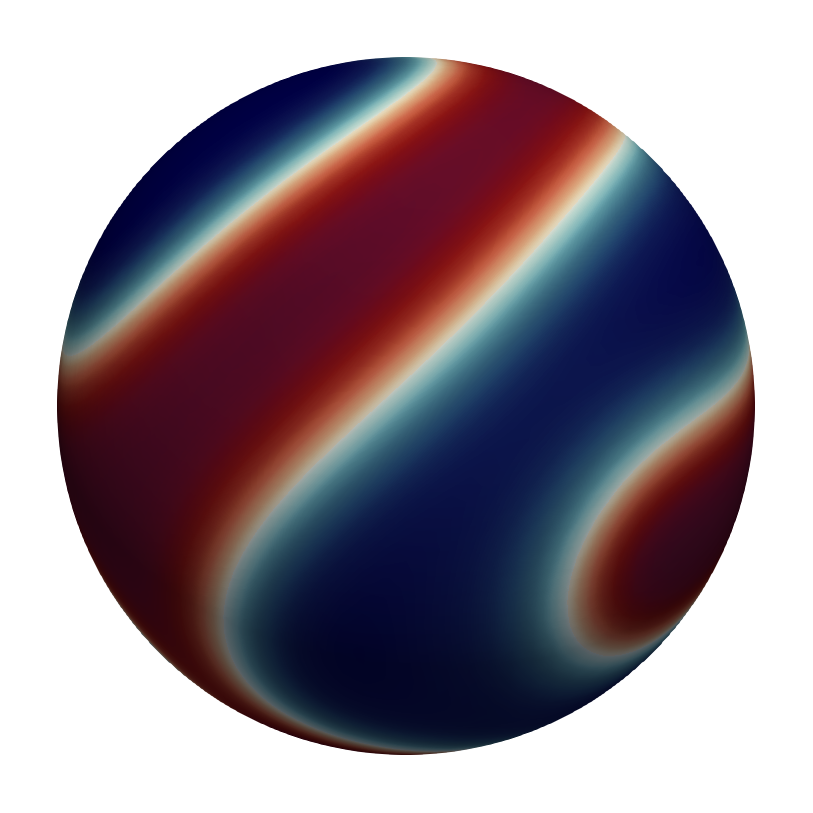}
		\includegraphics[width=0.13\columnwidth]{./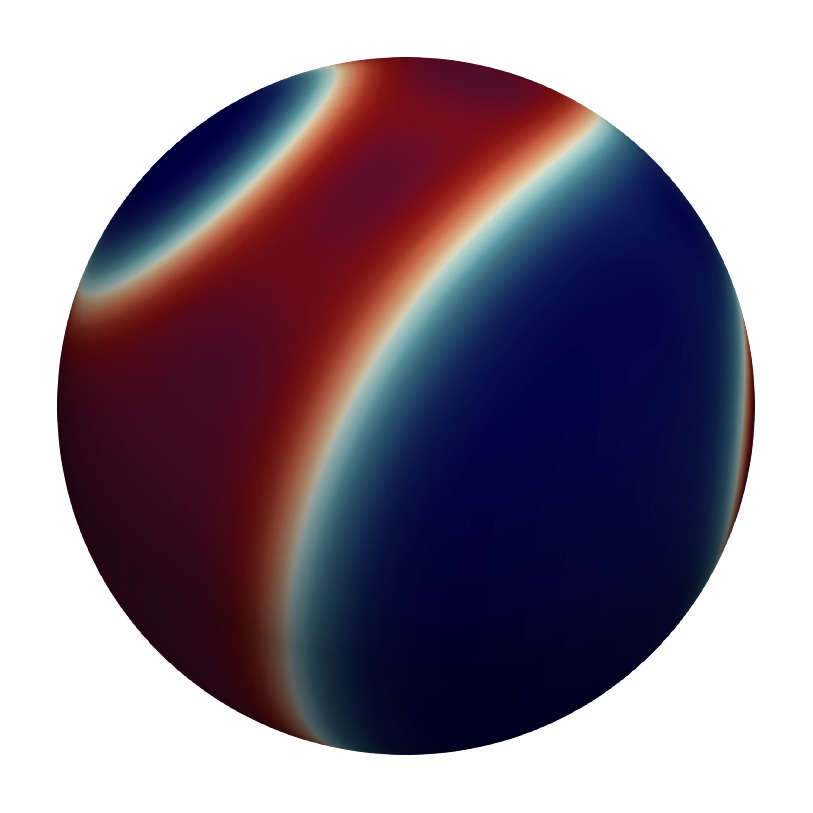}
		\includegraphics[width=0.13\columnwidth]{./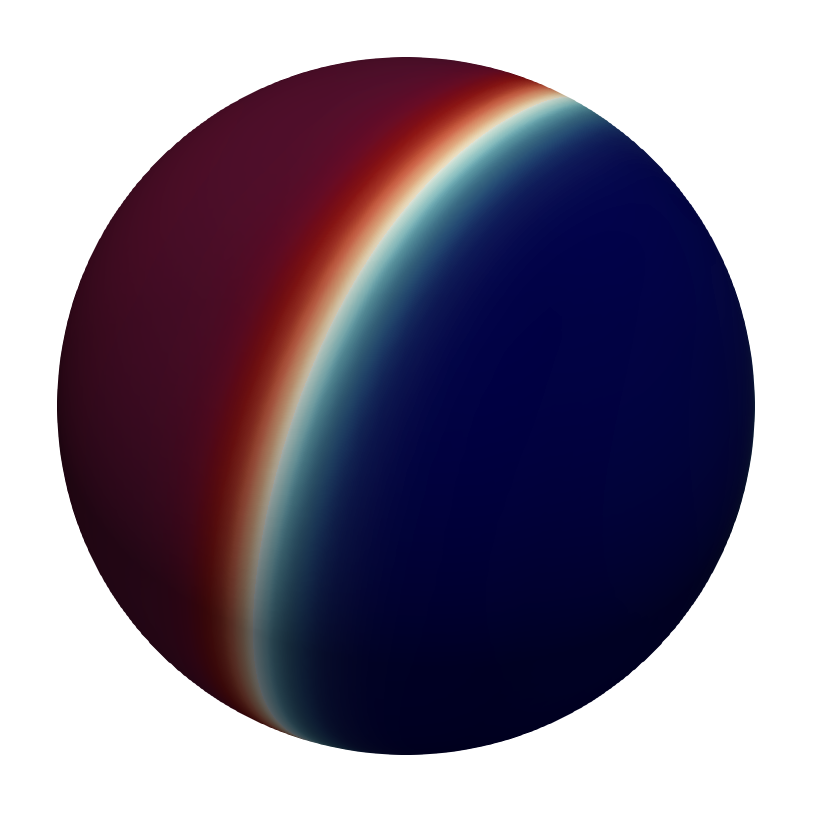}
	\end{center}
\caption{Phase separation  on the sphere, $a=0.5$: evolution of phases computed with the time-adaptive stabilized method in \cite{Yushutin_IJNMBE2019} (top) 
	and the time-adaptive SAV-BDF2 method (bottom).}\label{fig:a_50_ta} 
\end{figure}

A comparison of the time step sizes and time step number over time is shown in Fig.~\ref{fig:dt}. 
From Fig.~\ref{fig:dt} (left), we can see that the time step grows for both methods until approximately $t=50$, after which it fluctuates around $\Delta t=1$. Although the time step sizes are generally comparable for both methods, the SAV method utilizes slightly larger time steps during this initial integration stage. Consequently, time step number $n$ required to integrate the system up to any $t\leq200$ is smaller for the time-adaptive SAV-BDF2 method compared to the time-adaptive stabilized method in \cite{Yushutin_IJNMBE2019}. However, the difference is not significant. See Fig.~\ref{fig:dt} (right).

%given by the two methods are somewhat comparable for most of the time interval, their evolution
%is pretty different. In fact, in the spinodal decomposition phase the time adaptive SAV-BDF2 method allows for larger time steps than
%the time adaptive stabilized method in \cite{Yushutin_IJNMBE2019}, while the trend is inverted in 
%the domain ripening phase. \textbf{\small MO: I am not sure I understand the meaning of the next sentence.} Finally, the time adaptive stabilized method in \cite{Yushutin_IJNMBE2019}
%seems to realized before the the time adaptive SAV-BDF2 method when the simulation is close to the equilibrium configuration 
%(i.e., semi-spheres covered by the two phases).
%%\anna{Do we have an explanation for why the time step varies so much with SAV? I'm referring to Fig.~\ref{fig:dt}.}
%

\begin{figure}[htb!]
	\centering
%	\begin{overpic}[width=.49\textwidth,grid=false]{./}
%		\end{overpic}
\includegraphics[width=0.4\textwidth]{./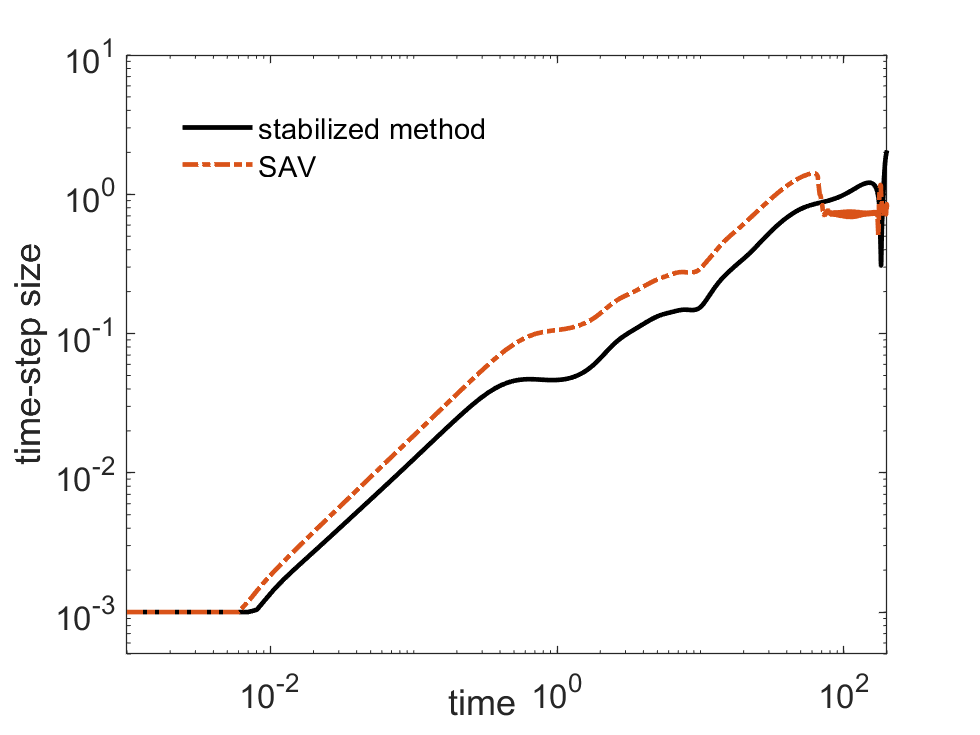}
\includegraphics[width=0.4\textwidth]{./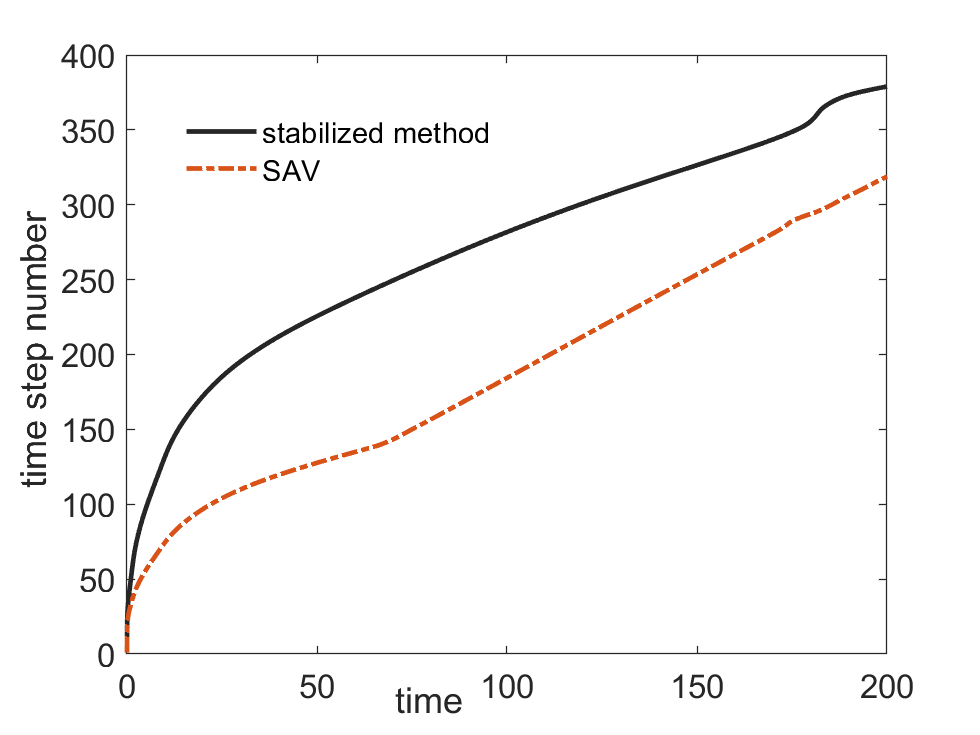}
\caption{Phase separation  on the sphere, $a=0.5$: evolution of the time step size $\Delta t$ (left)
and number of the time steps required at each time instant (right) for 
the time-adaptive stabilized method from \cite{Yushutin_IJNMBE2019} and the BDF2-SAV method with time step adaptivity.}\label{fig:dt}
\end{figure}

We conclude this section with a comment on the computational time. %\anna{Yerbol, can we give some detail about the workstation?}
All the computations were executed on a machine with an AMD EPYC 7513 32-Core Processor and 512 GB RAM. 
Fig.~\ref{fig:comp_time} reports the computational time needed by the simulations whose results are shown 
in Fig.~\ref{fig:a_50}, \ref{fig:a_30}, and \ref{fig:a_70} to complete the first 100 time steps.
The time required by the stabilized method in \cite{Yushutin_IJNMBE2019}
varies between one half and two thirds of the time required by the SAV method with no time step adaptivity. 
Let us now turn to the simulations in Fig.~\ref{fig:a_50_ta}, i.e., those with time adaptivity. 
The time-adaptive SAV-BDF2 method takes 319 time steps in time interval $(0,200]$ for a total computational time of 
about 41 minutes, 
% 2513 seconds
while the time-adaptive stabilized method in \cite{Yushutin_IJNMBE2019} takes about 9 minutes 
%583 seconds
to complete 379 time steps in the same time interval. The simulation with the time-adaptive SAV-BDF2 method
requires less time steps but takes longer overall. As mentioned at the end of Sec.~\ref{sec:impl}, the reason for this 
difference in the computational times is due to the fact that the extra terms introduced by the SAV method make 
the matrices of the associated linear systems dense. 
If one used a finite difference method on uniform grids for space discretization as in \cite{huang2020highly,LiShen2019,Shen2019_SIAMrev}, 
higher computational efficiency could be achieved for the SAV method. Our preference for a finite element method
and non-uniform meshes is for greater geometric flexibility, as shown in the next subsection. 

%\begin{table}[htb!]
%	\centering
%\begin{tabular}{|c|c|c|}
%	\hline
%	& SAV & Stabilized  \\
%	\hline
%	a = 0.3 &624 seconds & 385 seconds \\
%	\hline
%	a = 0.5 & 401 seconds & 191 seconds  \\
%	\hline
%	a = 0.7 & 570 seconds & 380 seconds \\
%	\hline
%\end{tabular}\caption{Computational time for both methods to complete 100 steps with BDF2 scheme}\label{tab:1}
%\end{table}

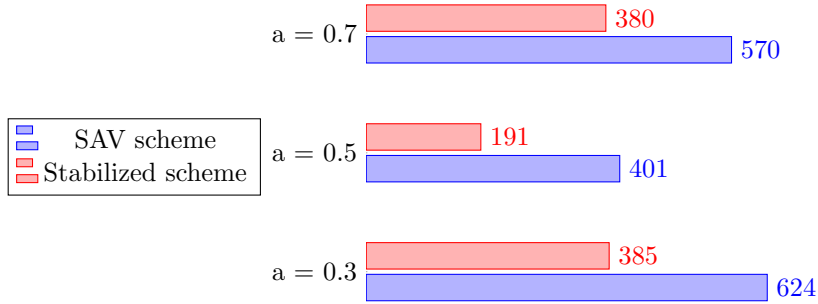
\begin{figure}[htb!]
	\centering
	\begin{tikzpicture}
		\begin{axis}[
			xbar,
			y axis line style = { opacity = 0 },
			axis x line       = none,
			tickwidth         = 0pt,
			enlarge y limits  = 0.4,
			enlarge x limits  = 0.4,
			symbolic y coords = {a = 0.3, a = 0.5, a = 0.7},
			nodes near coords,
			legend style={at={(-0.45,0.4)},anchor=south}
			]
			\addplot coordinates { (624,a = 0.3)         (401,a = 0.5)
				(570,a = 0.7)   };
			\addplot coordinates { (385,a = 0.3)         (191,a = 0.5)
				(380,a = 0.7)     };
			\legend{SAV scheme, Stabilized scheme}
		\end{axis}
	\end{tikzpicture}\caption{Phase separation on the sphere: computational time (in s) needed by the stabilized method in \cite{Yushutin_IJNMBE2019} and  the SAV method with no time step adaptivity to  complete the first 100 time steps of
	the simulations in Fig.~\ref{fig:a_50} ($a = 0.5$), \ref{fig:a_30} ($a = 0.3$), and \ref{fig:a_70} ($a = 0.7$)
	%\anna{Yerbol, please update the figure as discussed}\YP{ DONE}.
}\label{fig:comp_time}
\end{figure}

\subsection{Phase separation on a complex manifold}\label{sec:res_cell}

Because of our interest in phase separation on biological membranes in general, not just lipid vescicles, we need to be able to handle
surfaces that are more complex than the sphere. Here, we consider an idealized cell with surface $\Gamma$ given 
by the zero level set of following function \cite{dziuk2013finite,Yushutin_IJNMBE2019}:
$$\phi(\bx) = \frac{1}{4}x_1^2+x_2^2+\frac{4x_3^2}{(1+\frac{1}{2}\sin(\pi x_1))^2} -1.$$
Fig.~\ref{fig:cell} illustrates a side view of this complex manifold and an angle view of the surface mesh.

\begin{figure}[htb!]
	\centering
	\includegraphics[width=0.4\linewidth]{./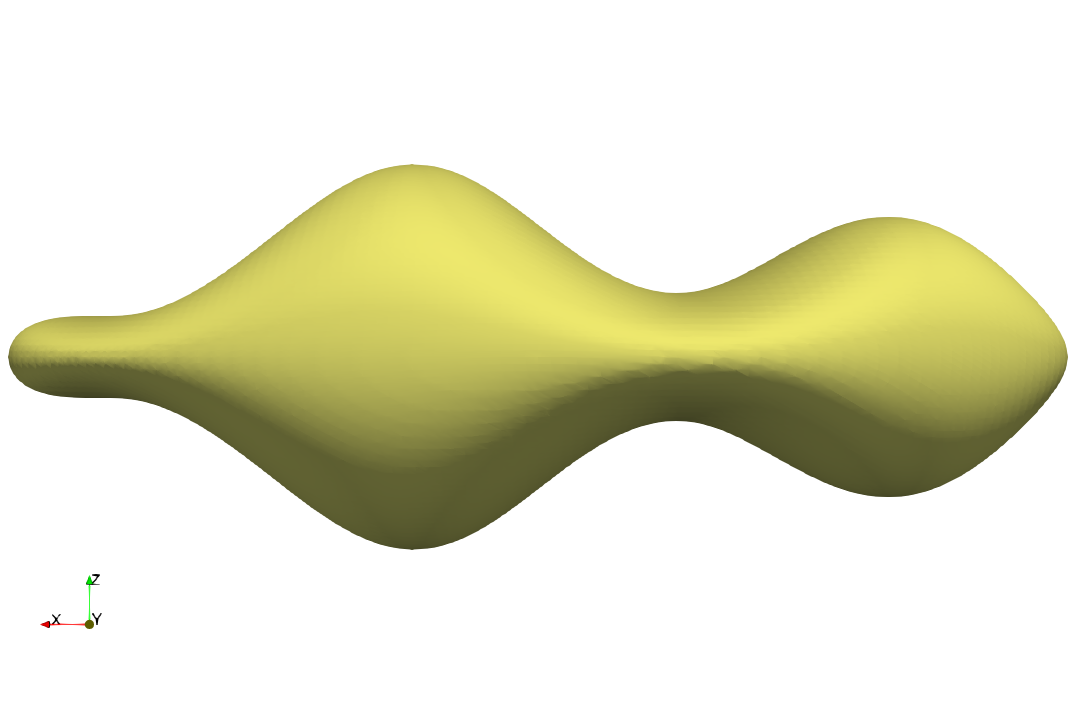}
	\includegraphics[width=0.4\linewidth]{./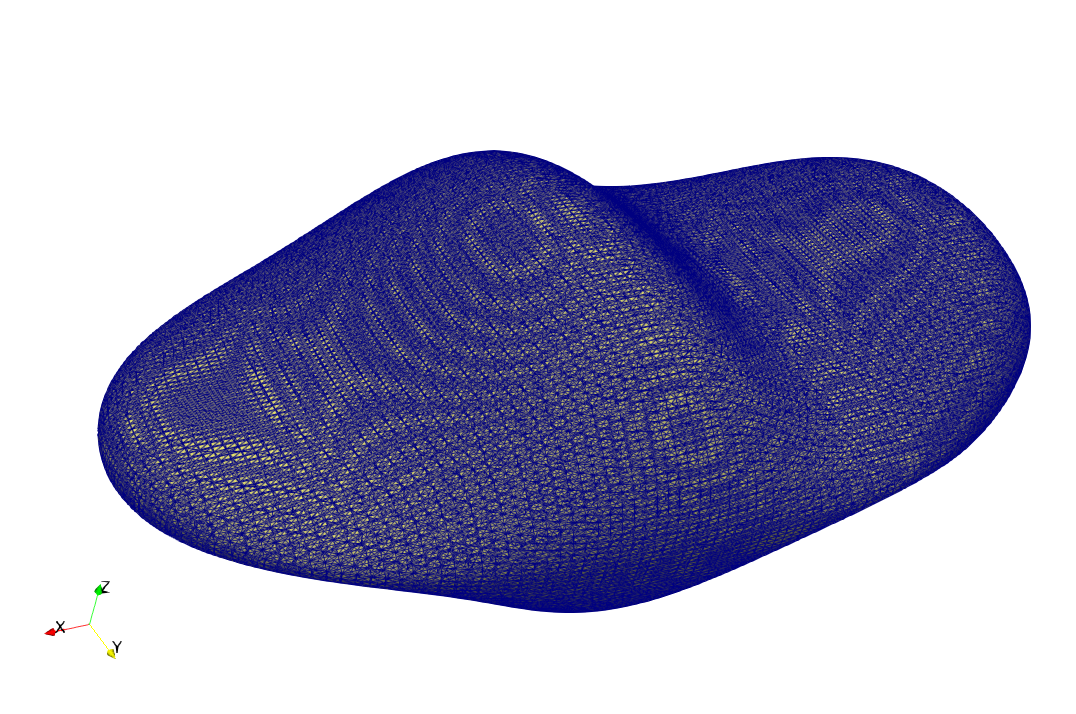}
	\caption{Illustration of the complex manifold}
	\label{fig:cell}
\end{figure}

We embed surface $\Gamma$ in bulk domain $\Omega=[-2,2]\times [-4/3,4/3]\times [-4/3,-4/3]$.  A tetrahedral mesh for $\Omega$ 
is generated in the same way as for the cases in the previous subsection, i.e., by diving $\Omega$ into cubes and then diving the cubes into tetrahedra. The active elements, which are the elements that intersect surface, are further refined for a total of 14298 degrees of freedom. This mesh has a level of refinement comparable to mesh $\ell = 5$ in Sec.~\ref{sec:res_spehre}.
We fix the time step to  $\Delta t = 0.005$ and do not allow for time step adaptivity. 

We set the interface thickness $\epsilon$ to 0.05, like in Sec.~\ref{sec:res_spehre}. Fig.~\ref{fig:a_50_cell} 
compares the evolution of the phases given by SAV- BDF2 method without time step adaptivity 
with the evolution given by the stabilized method in \cite{Yushutin_IJNMBE2019} for $a=0.5$. 
We recall that $a=0.5$ means that 50\% of the idealized cell surface is covered by 
the representative (red) phase and the remaining 50\% is covered by 
the other phases. Just like in the case of the sphere (see Fig.~\ref{fig:a_50}), 
there is no observable difference in the results given by the two methods.

\begin{figure}[htb!]
	\begin{center}
		\begin{overpic}[width=.16\textwidth,grid=false]{./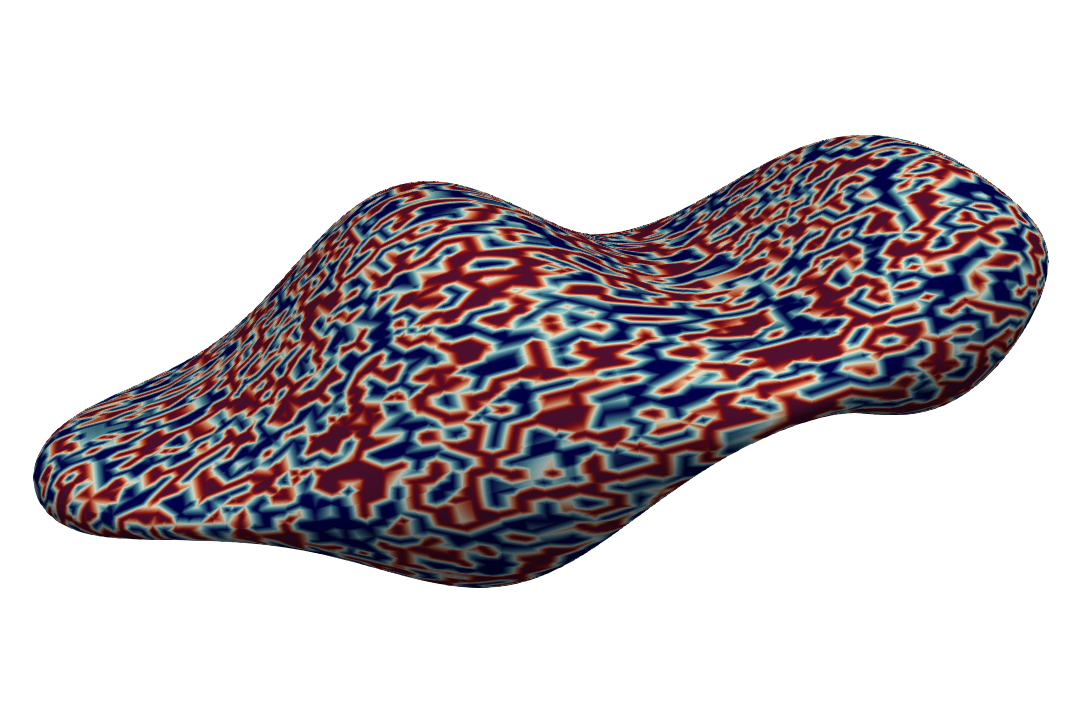}
			\put(30,62){\small{$t = 0$}}
			\put(-50,45){\small{Stabilized}}
		\end{overpic}
		\begin{overpic}[width=.16\textwidth,grid=false]{./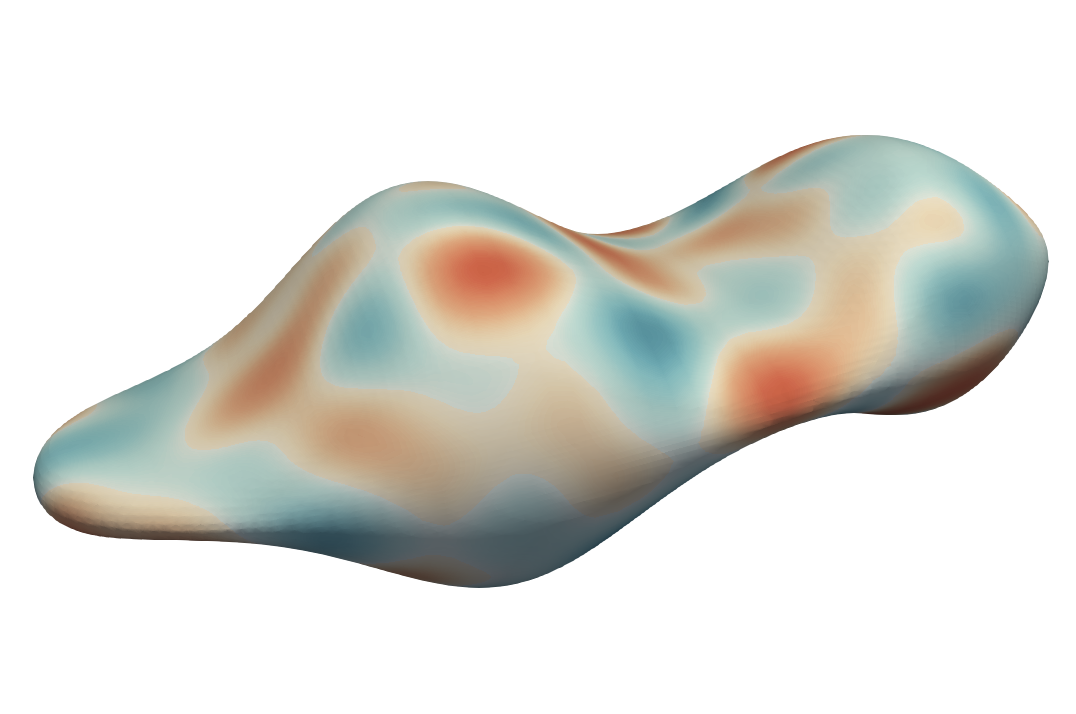}
			\put(30,62){\small{$t = 0.1$}}
		\end{overpic}
		\begin{overpic}[width=.16\textwidth,grid=false]{./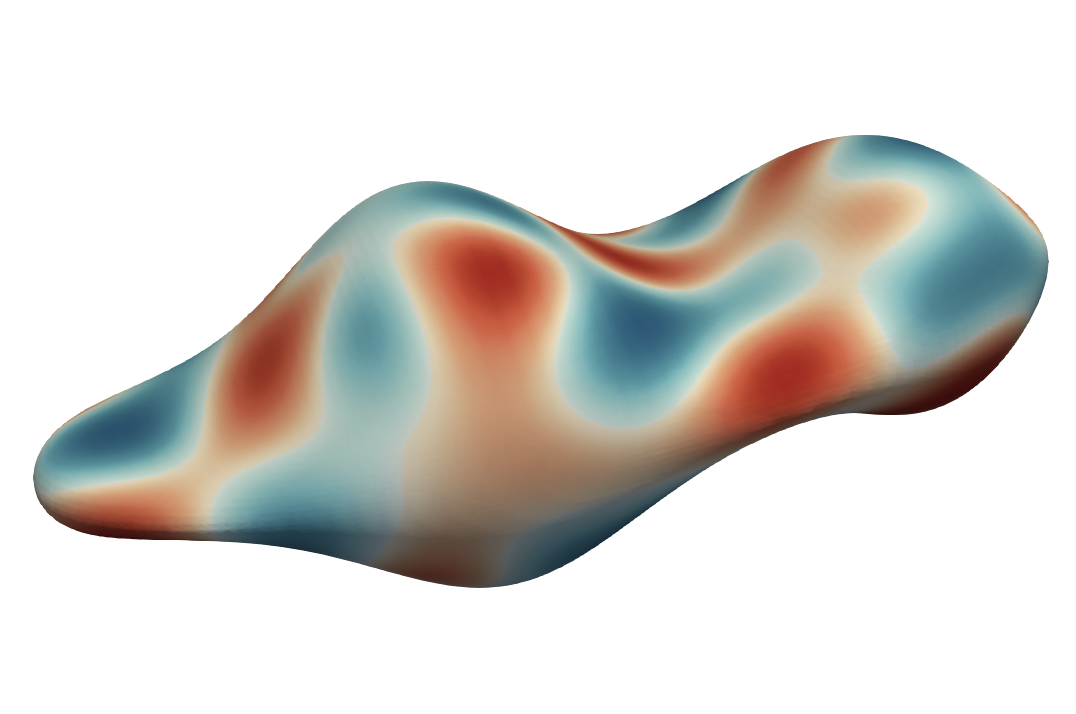}
			\put(30,62){\small{$t = 1$}}
		\end{overpic}
		\begin{overpic}[width=.16\textwidth,grid=false]{./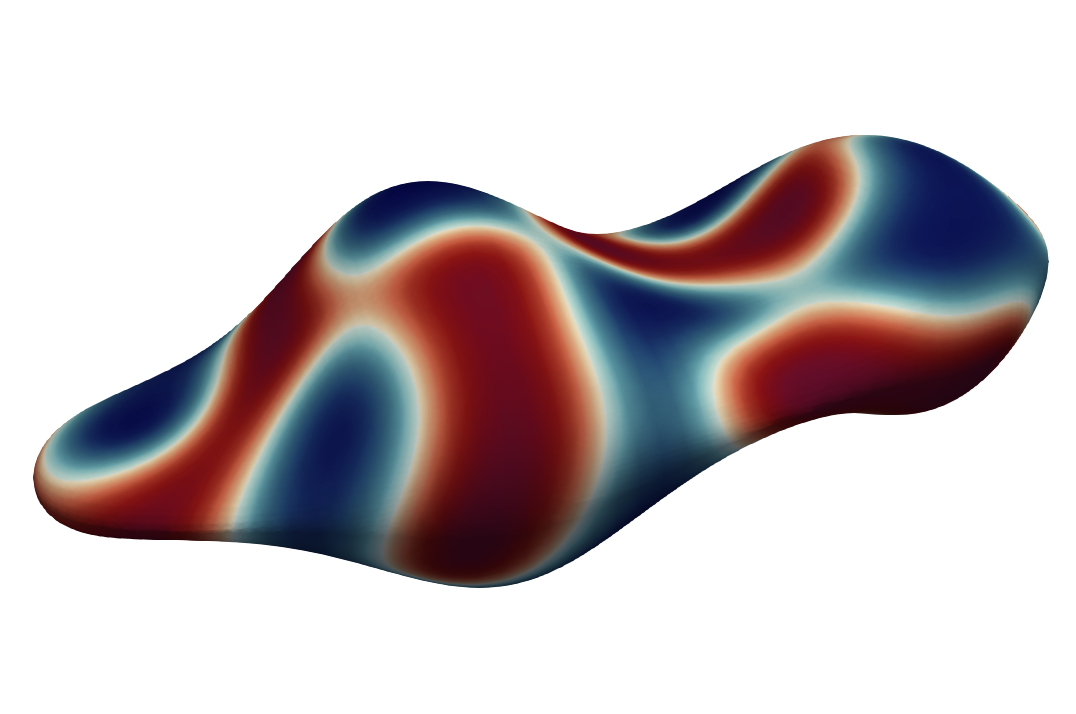}
			\put(30,62){\small{$t = 5$}}
		\end{overpic}
		\begin{overpic}[width=.16\textwidth,grid=false]{./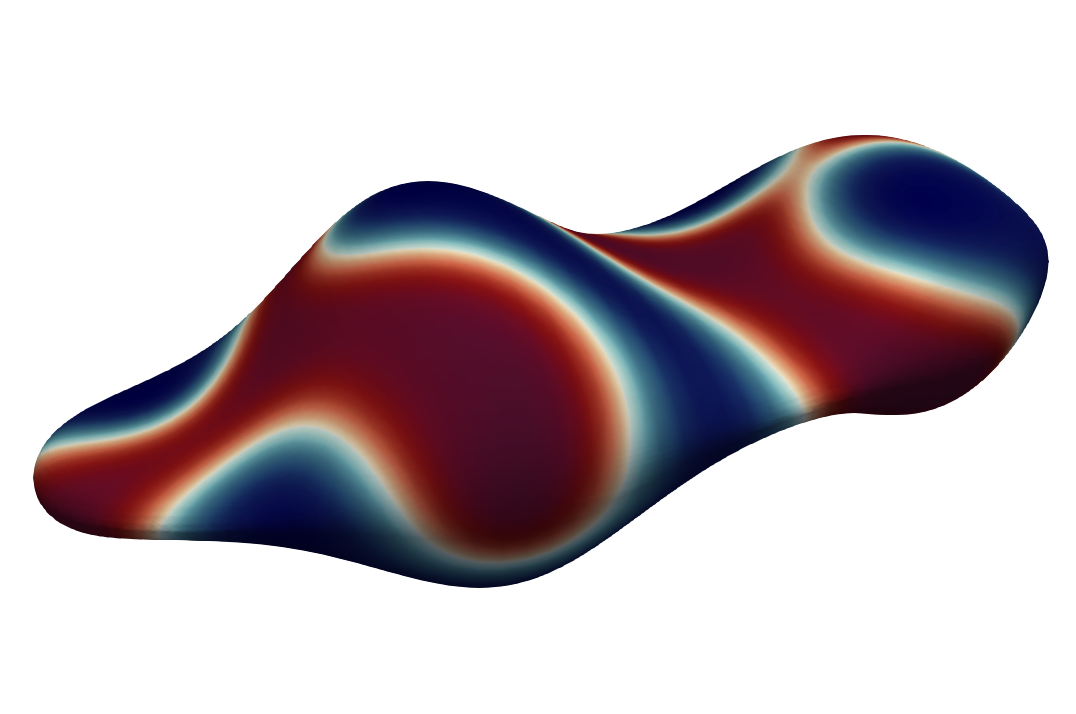}
			\put(30,62){\small{$t = 15$}}
		\end{overpic}\\
		\begin{overpic}[width=.16\textwidth,grid=false]{./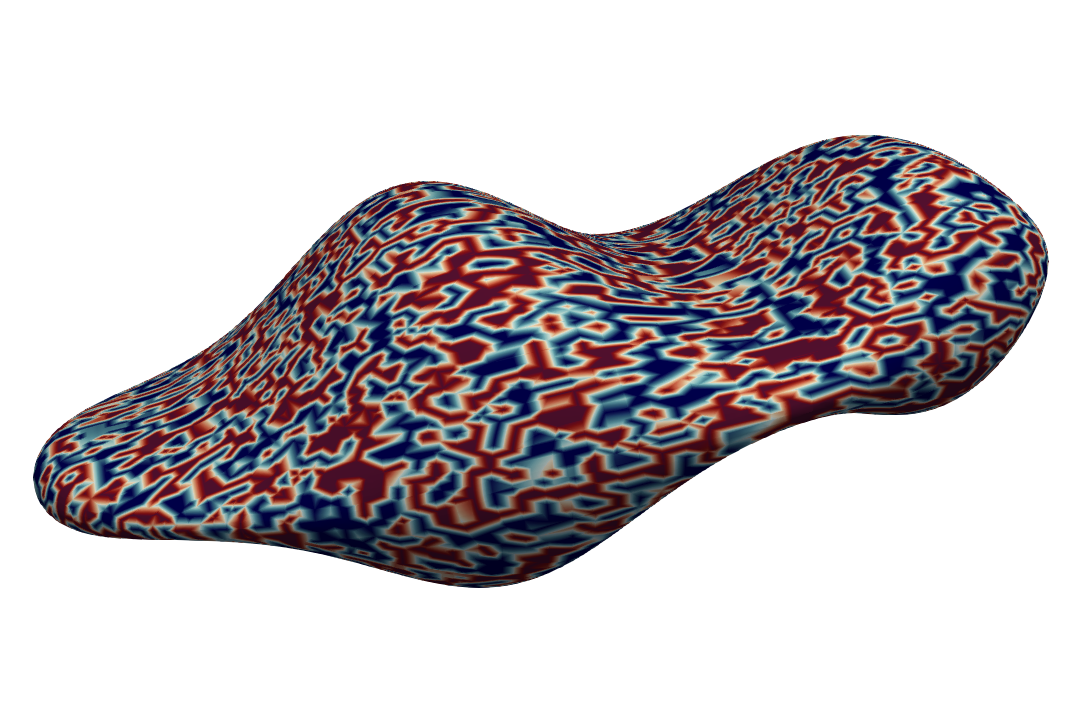}
			\put(-50,35){\small{BDF2}}
			\put(-50,55){\small{SAV}}
		\end{overpic}
		\includegraphics[width=0.16\columnwidth]{./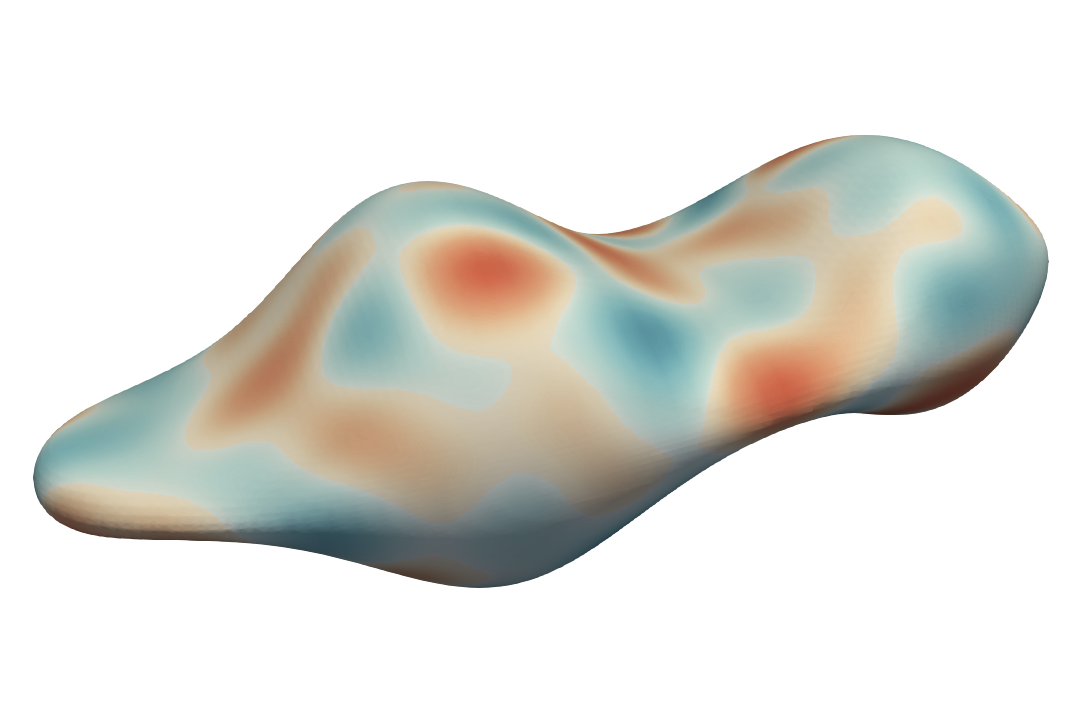}
		\includegraphics[width=0.16\columnwidth]{./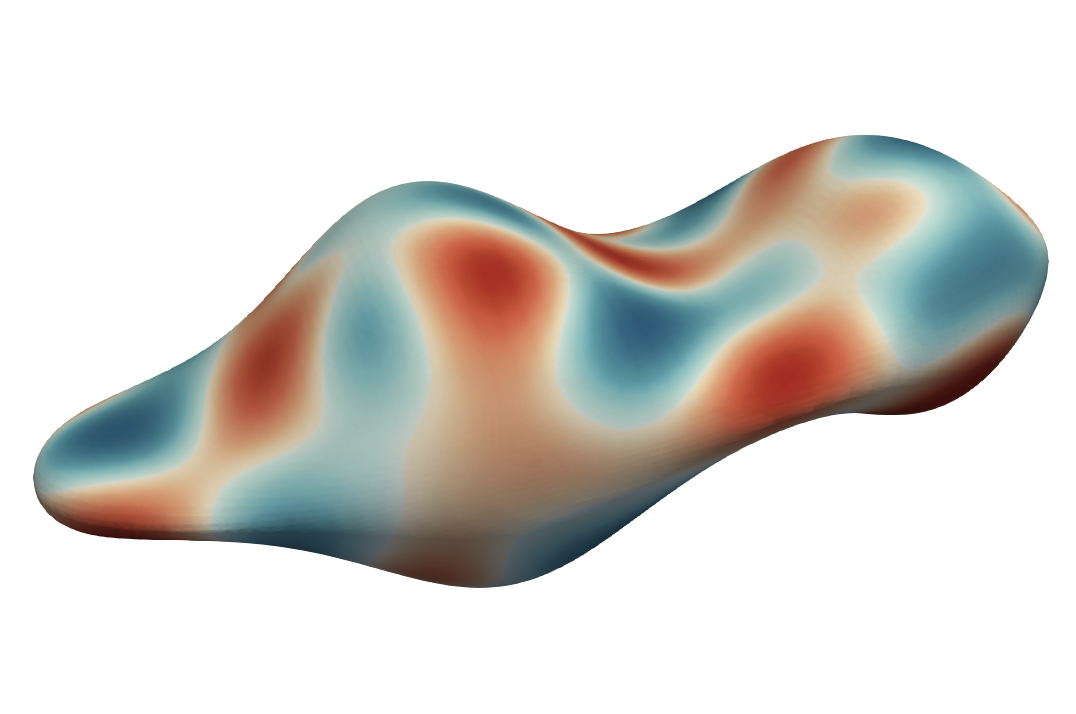}
		\includegraphics[width=0.16\columnwidth]{./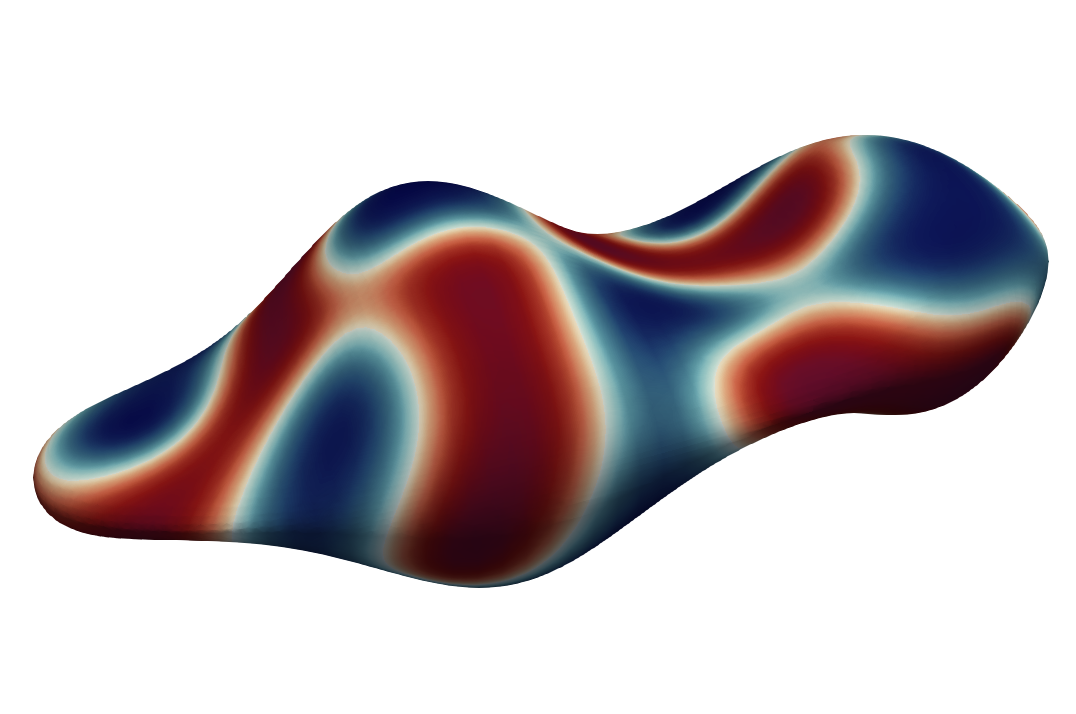}
		\includegraphics[width=0.16\columnwidth]{./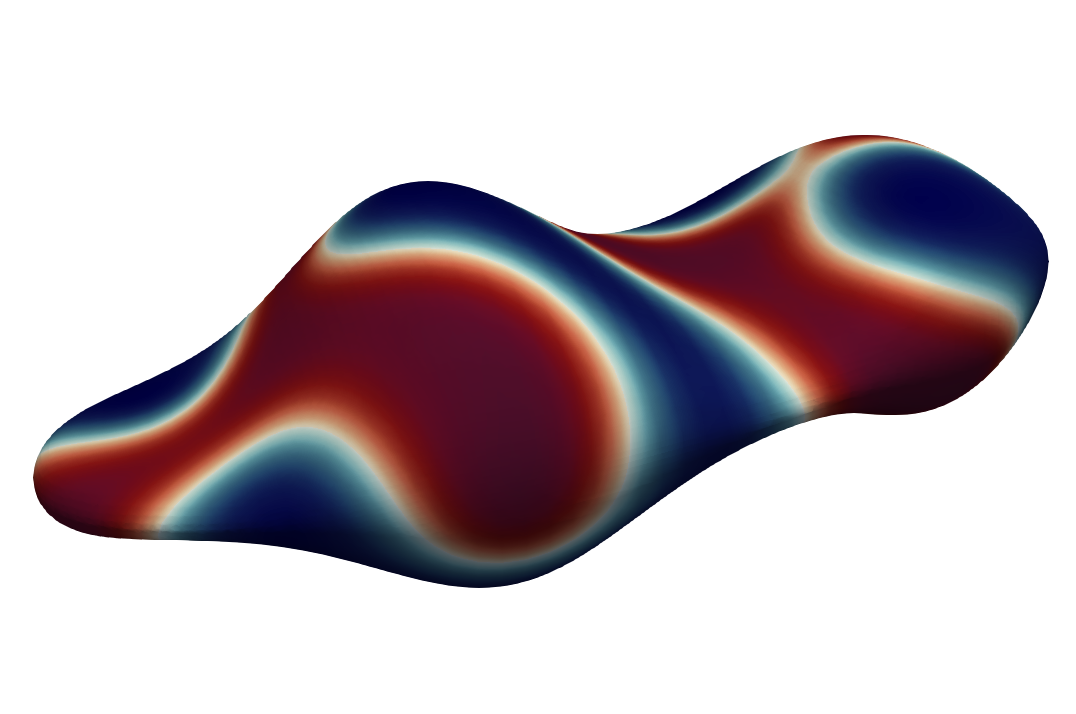}
		\\
		\includegraphics[width=0.4\columnwidth]{./img/colorbar.png}
		
	\end{center}
	\caption{Phase separation on an idealized cell: evolution of phases computed with the stabilized method in \cite{Yushutin_IJNMBE2019} (top) 
		and the SAV method and BDF2 without time step adaptivity (bottom).
	\label{fig:a_50_cell}}
\end{figure}

\section{Conclusion}\label{sec:concl}

The paper introduced and investigated an SAV formulation of the geometrically unfitted trace finite element method for the surface Cahn--Hilliard equations with degenerate mobility. The BDF1 and BDF2 versions of the method were proven to dissipate specific energy, thus conforming to the fundamental property of the continuous problem. The method demonstrated optimal convergence rates for smooth solutions and performed well in predicting phase separation and pattern formation in spherical and more complex shapes. Thus, it proved to be a valuable tool in modeling multicomponent lipid vesicles. A comparison with a semi-explicit mixed trace finite element method formulation with stabilization from~\cite{Shen_Yang2010} shows very similar performance of both methods for the given class of problems. 
Both methods are well-suited for time adaptation. Experiments suggested that SAV method allows for somewhat larger 
time steps when the same adaptive criteria are used for the SAV and semi-explicit stabilized methods. 
The stabilized method requires an additional parameter to be chosen, while the SAV method adds a rank-one dense matrix to the resulting system of algebraic equations, which must be solved at each time step. The availability of a fast algebraic solver for such systems may determine one's preference between these two solid methods.

\section*{Acknowledgments}

This work was partially supported by US National Science Foundation (NSF) through grant DMS-1953535.
M.O.~also acknowledges the support from NSF through DMS-2309197.

\bibliographystyle{abbrv}
\bibliography{literature}

\begin{thebibliography}{10}

\bibitem{akrivis2019energy}
G.~Akrivis, B.~Li, and D.~Li.
\newblock Energy-decaying extrapolated {RK--SAV} methods for the allen--{Cahn}
  and {Cahn}--{Hilliard} equations.
\newblock {\em SIAM Journal on Scientific Computing}, 41(6):A3703--A3727, 2019.

\bibitem{Bandekar2013}
A.~Bandekar, C.~Zhu, A.~Gomez, M.~Z. Menzenski, M.~Sempkowski, and S.~Sofou.
\newblock Masking and triggered unmasking of targeting ligands on liposomal
  chemotherapy selectively suppress tumor growth in vivo.
\newblock {\em Molecular Pharmaceutics}, 10(1):152--160, 2013.

\bibitem{burman2015stabilized}
E.~Burman, P.~Hansbo, and M.~G. Larson.
\newblock A stabilized cut finite element method for partial differential
  equations on surfaces: the {Laplace--Beltrami} operator.
\newblock {\em Computer Methods in Applied Mechanics and Engineering},
  285:188--207, 2015.

\bibitem{CAHN1961}
J.~W. {Cahn}.
\newblock On spinodal decomposition.
\newblock {\em Acta Metallurgica}, 9(9):795 -- 801, 1961.

\bibitem{cahn1958free}
J.~W. Cahn and J.~E. Hilliard.
\newblock Free energy of a nonuniform system. i. interfacial free energy.
\newblock {\em The Journal of chemical physics}, 28(2):258--267, 1958.

\bibitem{chen2019fast}
C.~Chen and X.~Yang.
\newblock Fast, provably unconditionally energy stable, and second-order
  accurate algorithms for the anisotropic {Cahn}--{Hilliard} model.
\newblock {\em Computer Methods in Applied Mechanics and Engineering},
  351:35--59, 2019.

\bibitem{chen2020optimal}
H.~Chen, J.~Mao, and J.~Shen.
\newblock Optimal error estimates for the scalar auxiliary variable
  finite-element schemes for gradient flows.
\newblock {\em Numerische Mathematik}, 145(1):167--196, 2020.

\bibitem{cheng2019highly}
Q.~Cheng, J.~Shen, and X.~Yang.
\newblock Highly efficient and accurate numerical schemes for the epitaxial
  thin film growth models by using the {SAV} approach.
\newblock {\em Journal of Scientific Computing}, 78:1467--1487, 2019.

\bibitem{DROPS}
{DROPS package}.
\newblock \verb|http://www.igpm.rwth-aachen.de/DROPS/|.

\bibitem{dziuk2013finite}
G.~Dziuk and C.~M. Elliott.
\newblock Finite element methods for surface {PDEs}.
\newblock {\em Acta Numerica}, 22:289--396, 2013.

\bibitem{feng2004error}
X.~Feng and A.~Prohl.
\newblock Error analysis of a mixed finite element method for the
  {{Cahn}-Hilliard} equation.
\newblock {\em Numerische Mathematik}, 99(1):47--84, 2004.

\bibitem{Fraaije2003}
J.~G. E.~M. Fraaije and G.~J.~A. Sevink.
\newblock Model for pattern formation in polymer surfactant nanodroplets.
\newblock {\em Macromolecules}, 36(21):7891--7893, 2003.

\bibitem{garcke2016coupled}
H.~Garcke, J.~Kampmann, A.~R{\"a}tz, and M.~R{\"o}ger.
\newblock A coupled surface-{Cahn}--{Hilliard} bulk-diffusion system modeling
  lipid raft formation in cell membranes.
\newblock {\em Mathematical Models and Methods in Applied Sciences},
  26(06):1149--1189, 2016.

\bibitem{Giacomelli2001}
L.~Giacomelli and F.~Otto.
\newblock {Variational formulation for the lubrication approximation of the
  Hele Shaw flow}.
\newblock {\em Calculus of Variations and Partial Differential Equations},
  13:377--403, 2001.

\bibitem{gomez2008isogeometric}
H.~G{\'o}mez, V.~M. Calo, Y.~Bazilevs, and T.~J. Hughes.
\newblock Isogeometric analysis of the {Cahn}--{Hilliard} phase-field model.
\newblock {\em {Computer Methods in Applied Mechanics and Engineering}},
  197(49-50):4333--4352, 2008.

\bibitem{grande2018analysis}
J.~Grande, C.~Lehrenfeld, and A.~Reusken.
\newblock Analysis of a high-order trace finite element method for {PDEs} on
  level set surfaces.
\newblock {\em SIAM Journal on Numerical Analysis}, 56(1):228--255, 2018.

\bibitem{grigorieff1983stability}
R.~D. Grigorieff.
\newblock Stability of multistep-methods on variable grids.
\newblock {\em Numerische Mathematik}, 42:359--377, 1983.

\bibitem{guillen2023energy}
F.~Guillen-Gonzalez and G.~Tierra.
\newblock Energy-stable and boundedness preserving numerical schemes for the
  {Cahn}-{Hilliard} equation with degenerate mobility.
\newblock {\em arXiv preprint arXiv:2301.04913}, 2023.

\bibitem{hausser2013thermodynamically}
F.~Hau{\ss}er, S.~Li, J.~Lowengrub, W.~Marth, A.~R{\"a}tz, and A.~Voigt.
\newblock Thermodynamically consistent models for two-component vesicles.
\newblock {\em Int. J. Biomath. Biostat}, 2(1):19--48, 2013.

\bibitem{huang2020highly}
F.~Huang, J.~Shen, and Z.~Yang.
\newblock A highly efficient and accurate new scalar auxiliary variable
  approach for gradient flows.
\newblock {\em SIAM Journal on Scientific Computing}, 42(4):A2514--A2536, 2020.

\bibitem{huang2022upwind}
Q.-A. Huang, W.~Jiang, J.~Z. Yang, and C.~Yuan.
\newblock Upwind-{SAV} approach for constructing bound-preserving and
  energy-stable schemes of the {Cahn}-{Hilliard} equation with degenerate
  mobility.
\newblock {\em arXiv preprint arXiv:2210.16017}, 2022.

\bibitem{KARVE20104409}
S.~Karve, A.~Bandekar, M.~R. Ali, and S.~Sofou.
\newblock The {pH}-dependent association with cancer cells of tunable
  functionalized lipid vesicles with encapsulated doxorubicin for high
  cell-kill selectivity.
\newblock {\em Biomaterials}, 31(15):4409 -- 4416, 2010.

\bibitem{lehrenfeld2018stabilized}
C.~Lehrenfeld, M.~A. Olshanskii, and X.~Xu.
\newblock A stabilized trace finite element method for partial differential
  equations on evolving surfaces.
\newblock {\em SIAM Journal on Numerical Analysis}, 56(3):1643--1672, 2018.

\bibitem{LiShen2019}
X.~Li, J.~Shen, and H.~Rui.
\newblock Stability and error analysis of a second-order {SAV} scheme with
  block-centered finite differences for gradient flows.
\newblock {\em Math. Comp.}, 88:2047--2068, 2019.

\bibitem{liao2022mesh}
H.-l. Liao, B.~Ji, L.~Wang, and Z.~Zhang.
\newblock Mesh-robustness of an energy stable {BDF}2 scheme with variable steps
  for the {Cahn}--{Hilliard} model.
\newblock {\em Journal of Scientific Computing}, 92(2):52, 2022.

\bibitem{liao2022adaptive}
H.-l. Liao, B.~Ji, and L.~Zhang.
\newblock An adaptive {BDF}2 implicit time-stepping method for the phase field
  crystal model.
\newblock {\em IMA Journal of Numerical Analysis}, 42(1):649--679, 2022.

\bibitem{liao2021analysis}
H.-L. Liao, X.~Song, T.~Tang, and T.~Zhou.
\newblock Analysis of the second-order {BDF} scheme with variable steps for the
  molecular beam epitaxial model without slope selection.
\newblock {\em Science China Mathematics}, 64:887--902, 2021.

\bibitem{liao2021analysisdiffusion}
H.-l. Liao and Z.~Zhang.
\newblock Analysis of adaptive {BDF}2 scheme for diffusion equations.
\newblock {\em Mathematics of Computation}, 90(329):1207--1226, 2021.

\bibitem{lisini2012cahn}
S.~Lisini, D.~Matthes, and G.~Savar{\'e}.
\newblock {Cahn}--{Hilliard} and thin film equations with nonlinear mobility as
  gradient flows in weighted-wasserstein metrics.
\newblock {\em Journal of differential equations}, 253(2):814--850, 2012.

\bibitem{ORG09}
M.~A. Olshanskii, A.~Reusken, and J.~Grande.
\newblock A finite element method for elliptic equations on surfaces.
\newblock {\em SIAM Journal on Numerical Analysis}, 47:3339--3358, 2009.

\bibitem{olshanskii2014eulerian}
M.~A. Olshanskii, A.~Reusken, and X.~Xu.
\newblock An {Eulerian} space--time finite element method for diffusion
  problems on evolving surfaces.
\newblock {\em SIAM Journal on Numerical Analysis}, 52(3):1354--1377, 2014.

\bibitem{olshanskii2017trace}
M.~A. Olshanskii and X.~Xu.
\newblock A trace finite element method for {PDEs} on evolving surfaces.
\newblock {\em SIAM Journal on Scientific Computing}, 39(4):A1301--A1319, 2017.

\bibitem{Palzhanov2021}
Y.~Palzhanov, A.~Zhiliakov, A.~Quaini, and M.~Olshanskii.
\newblock A decoupled, stable, and linear fem for a phase-field model of
  variable density two-phase incompressible surface flow.
\newblock {\em Computer Methods in Applied Mechanics and Engineering},
  387:114167, 2021.

\bibitem{PhysRevE.58.7475}
T.~Qian and P.~Sheng.
\newblock Generalized hydrodynamic equations for nematic liquid crystals.
\newblock {\em Phys. Rev. E}, 58:7475--7485, Dec 1998.

\bibitem{Shen2018_SINUM}
J.~Shen and J.~Xu.
\newblock Convergence and error analysis for the scalar auxiliary variable
  ({SAV}) schemes to gradient flows.
\newblock {\em SIAM Journal on Numerical Analysis}, 56(5):2895--2912, 2018.

\bibitem{shen2018scalar}
J.~Shen, J.~Xu, and J.~Yang.
\newblock The scalar auxiliary variable ({SAV}) approach for gradient flows.
\newblock {\em Journal of Computational Physics}, 353:407--416, 2018.

\bibitem{Shen2019_SIAMrev}
J.~Shen, J.~Xu, and J.~Yang.
\newblock A new class of efficient and robust energy stable schemes for
  gradient flows.
\newblock {\em SIAM Review}, 61(3):474--506, 2019.

\bibitem{shen2019}
J.~Shen, J.~Xu, and J.~Yang.
\newblock A new class of efficient and robust energy stable schemes for
  gradient flows.
\newblock {\em SIAM Review}, 61(3):474--506, 2019.

\bibitem{Shen_Yang2010}
J.~Shen and X.~Yang.
\newblock Numerical approximations of {Allen-{Cahn} and {Cahn}-{Hilliard}}
  equations.
\newblock {\em Discrete \& Continuous Dynamical Systems - A}, 28:1669, 2010.

\bibitem{sun2020numerical}
M.~Sun, X.~Feng, and K.~Wang.
\newblock Numerical simulation of binary fluid--surfactant phase field model
  coupled with geometric curvature on the curved surface.
\newblock {\em Computer Methods in Applied Mechanics and Engineering},
  367:113123, 2020.

\bibitem{sun2022modeling}
M.~Sun, X.~Xiao, X.~Feng, and K.~Wang.
\newblock Modeling and numerical simulation of surfactant systems with
  incompressible fluid flows on surfaces.
\newblock {\em Computer Methods in Applied Mechanics and Engineering},
  390:114450, 2022.

\bibitem{wang2021second}
M.~Wang, Q.~Huang, and C.~Wang.
\newblock A second order accurate scalar auxiliary variable ({SAV}) numerical
  method for the square phase field crystal equation.
\newblock {\em Journal of Scientific Computing}, 88(2):33, 2021.

\bibitem{WANG2022183898}
Y.~Wang, Y.~Palzhanov, A.~Quaini, M.~Olshanskii, and S.~Majd.
\newblock Lipid domain coarsening and fluidity in multicomponent lipid
  vesicles: A continuum based model and its experimental validation.
\newblock {\em Biochimica et Biophysica Acta (BBA) - Biomembranes},
  1864(7):183898, 2022.

\bibitem{Yushutin_IJNMBE2019}
V.~Yushutin, A.~Quaini, S.~Majd, and M.~Olshanskii.
\newblock A computational study of lateral phase separation in biological
  membranes.
\newblock {\em International journal for numerical methods in biomedical
  engineering}, 35(3):e3181, 2019.

\bibitem{Yushutin2019}
V.~Yushutin, A.~Quaini, and M.~Olshanskii.
\newblock Numerical modeling of phase separation on dynamic surfaces.
\newblock {\em Journal of Computational Physics}, 407:109126, 2020.

\bibitem{zhiliakov2021experimental}
A.~Zhiliakov, Y.~Wang, A.~Quaini, M.~Olshanskii, and S.~Majd.
\newblock Experimental validation of a phase-field model to predict coarsening
  dynamics of lipid domains in multicomponent membranes.
\newblock {\em Biochimica et Biophysica Acta (BBA)-Biomembranes},
  1863(1):183446, 2021.

\bibitem{zhuang2019efficient}
Q.~Zhuang and J.~Shen.
\newblock Efficient {SAV} approach for imaginary time gradient flows with
  applications to one-and multi-component bose-einstein condensates.
\newblock {\em Journal of Computational Physics}, 396:72--88, 2019.

\end{thebibliography}

\end{document}